\numberwithin{equation}{section}
\def\la{\lambda}
\def\ze{\zeta}
\def\varep{\varepsilon}
\def\al{\alpha}
\def\hat{\widehat}
\def\R{{\mathbb R}}
\def\e{{\varepsilon}}
\def\what{\widehat}
\newtheorem{theorem}{Theorem}[section]
\newtheorem{proposition}[theorem]{Proposition}
\newtheorem{lemma}[theorem]{Lemma}
\newcommand{\RR}{\mathbb{R}}
\newcommand{\sign}{{\rm sgn}\thinspace}
\newcommand{\pa}{\partial}
\begin{document}

\title{Global solutions for the generalized SQG patch equation}

\author{Diego C\'ordoba}
\address{Instituto de Ciencias Matem\'{a}ticas}
\email{dcg@icmat.es}

\author{Javier G\'omez-Serrano} 
\address{Princeton University}
\email{jg27@math.princeton.edu}

\author{Alexandru D. Ionescu}
\address{Princeton University}
\email{aionescu@math.princeton.edu}

\thanks{The first two authors were supported in part by the grant MTM2014-59488-P (Spain) and ICMAT Severo Ochoa projects SEV-2011-008 and SEV-2015-556. The first author was supported in part by a Minerva Distinguished Visitorship at Princeton University. The second author was supported in part by an AMS Simons Travel Grant. Part of this work was done while some of the authors were visiting ICMAT and Princeton University, to which they are grateful for their support. The last author was supported in part by NSF grant DMS-1600028 and by NSF-FRG grant DMS-1463753.}

\begin{abstract}

We consider the inviscid generalized surface quasi-geostrophic equation (gSQG) in a patch setting, where the parameter $\alpha \in (1,2)$. The cases $\alpha = 0$ and $\alpha = 1$ correspond to 2d Euler and SQG respectively, and our choice of the parameter $\alpha$ results in a velocity more singular than in the SQG case. 

Our main result concerns the global stability of the half-plane patch stationary solution, under small and localized perturbations. Our theorem appears to be the first construction of stable global solutions for the gSQG-patch equations. The only other nontrivial global solutions known so far in the patch setting are the so-called V-states, which are uniformly rotating and periodic in time solutions. 

\vskip 0.3cm
\textit{Keywords: patches, surface quasi-geostrophic equation, dispersion, modified scattering}

\end{abstract}

\maketitle

%\setcounter{tocdepth}{1}
%\tableofcontents

\section{Introduction}

\subsection{The gSQG equations} In this paper, we consider the generalized surface-quasi\-geo\-stro\-phic equations (gSQG):
\begin{align}\label{Int1}
\left\{ \begin{array}{ll}
\partial_{t}\theta+u\cdot\nabla\theta=0,\quad(x,t)\in\mathbb{R}^2\times\mathbb{R}_+, &\\
u=-\nabla^\perp(-\Delta)^{-1+\frac{\alpha}{2}}\theta,\\
\theta_{|t=0}=\theta_0,
\end{array} \right.
\end{align}
where $\alpha \in (0,2)$. The case $\alpha = 1$ corresponds to the surface quasi-geostrophic (SQG) equation and the limiting case $\alpha = 0$ refers to the 2D incompressible Euler equation. The case $\alpha = 2$ produces stationary solutions.

These are so-called active scalar equations, which have been originally introduced and studied in the setting of sufficiently smooth solutions $\theta$. The equations \eqref{Int1}  have also been analyzed extensively in the natural setting of the so-called $\alpha$-\textit{patches}, which are solutions for which $\theta$ is  a step function
\begin{align}\label{pa1}
  \theta(x,t) =
  \left\{
 \begin{array}{ll}
   \theta_1, \text{ if } \ \ x \in \Omega(t) \\ 
   \theta_2, \text{ if }  \ \ x \in \Omega(t) ^c. \\
   \end{array}
  \right.
\end{align}
Here $\Omega(0)\subset\mathbb{R}^2$ is a regular set given by the initial distribution of $\theta$, $\theta_1$ and $\theta_2$ are constants, and $\Omega(t)$ is the evolution of $\Omega(0)$ under the induced velocity field. 

In this case, the evolution of a patch can be determined by looking just at the evolution of its boundary, thus reducing the problem to a nonlocal one-dimensional equation for the boundary of $\Omega(t)$. More precisely, the evolution equation for the interface of an $\alpha$-patch, which we parametrize as $z:I\to\mathbb{R}^2$, $z(x) = (z_1(x),z_2(x))$, can be written as
 \begin{align}
\label{Int2}
 \partial_t z(x,t) = -(\theta_2 - \theta_1)C(\al) \int_{I} \frac{ \partial_x z (x,t) - \partial_x z(x-y,t) }{ \vert z(x,t) - z(x-y, t) \vert^{\alpha}} dy + c(x,t)z_x(x,t).
 \end{align}
Here $I\subseteq\mathbb{R}$ is an interval (usually $I=[0,2\pi]$ in the case of bounded patches or $I=\mathbb{R}$ for unbounded patches), the presence of the function $c$ has to do with the flexibility in parametrizing the curve, and the normalizing constant $C(\al)$ is given by
\begin{align*}
 C(\al) = \frac{1}{2\pi} \frac{\Gamma(\al/2)}{2^{1-\al}\Gamma((2-\al)/2))}.
\end{align*}

\subsubsection{Local regularity} The local regularity theory for the equations \eqref{Int1} and \eqref{Int2} is generally well understood, starting with the work of Constantin--Majda--Tabak \cite{Constantin-Majda-Tabak:formation-fronts-qg} and Held--Pierrehumbert--Garner--Swanson \cite{Held-Pierrehumbert-Garner-Swanson:sqg-dynamics}. As expected, data with sufficient smoothness lead to local in time unique solutions that propagate the regularity of the initial data (see for example Rodrigo \cite{Rodrigo:evolution-sharp-fronts-qg}, Gancedo \cite{Gancedo:existence-alpha-patch-sobolev}, and Chae--Constantin--Cordoba-Gancedo--Wu \cite{Chae-Constantin-Cordoba-Gancedo-Wu:gsqg-singular-velocities} for some regularity results of this type).

Weak solutions have also been constructed, starting with the work of Resnick \cite{Resnick:phd-thesis-sqg-chicago} on global weak solutions in $L^2$ in the SQG case $\al = 1$. See also \cite{Marchand:existence-regularity-weak-solutions-sqg}, \cite{Chae-Constantin-Cordoba-Gancedo-Wu:gsqg-singular-velocities}, and \cite{Nahmod-Pavlovic-Staffilani-Totz:global-invariant-measures-gsqg} for more general classes of weak solutions. Very recently, Buckmaster--Shkoller--Vicol \cite{Buckmaster-Shkoller-Vicol:nonuniqueness-sqg} proved lack of uniqueness of weak solutions for the SQG equation in certain spaces less regular than $L^{2}$.

\subsubsection{Dynamical formation of singularities} The problem of whether the SQG evolution can lead to finite time singularities is a challenging open problem both in the smooth case \eqref{Int1} and in the patch case \eqref{Int2}.

In the smooth case \eqref{Int1}, the early numerical simulations in \cite{Constantin-Majda-Tabak:formation-fronts-qg} indicated a possible singularity in the form of a hyperbolic saddle closing in finite time. However, C\'ordoba \cite{Cordoba:nonexistence-hyperbolic-blowup-qg} showed that such a scenario cannot actually lead to singularities, and bounded the growth by a quadruple exponential (see also \cite{Cordoba-Fefferman:growth-solutions-qg-2d-euler} and \cite{Deng-Hou-Li-Yu:non-blowup-2d-sqg}). The same scenario was recently revisited with bigger computational power and improved algorithms by Constantin--Lai--Sharma--Tseng--Wu \cite{Constantin-Lai-Sharma-Tseng-Wu:new-numerics-sqg}, yielding no evidence of blowup and the depletion of the hyperbolic saddle past the previously computed times.  More recently, Scott \cite{Scott:scenario-singularity-quasigeostrophic}, starting from elliptical configurations, proposed a candidate that appears to develop filamentation and, after a few cascades, blowup of $\nabla \theta$.

In the patch case \eqref{Int2}, numerical simulations looking for singularities of the interface have been performed by different authors. There are at least two scenarios that suggest a possible formation of singularities. The first one (computed by C\'ordoba--Fontelos--Mancho--Rodrigo \cite{Cordoba-Fontelos-Mancho-Rodrigo:evidence-singularities-contour-dynamics}), involves the evolution of two patches, and the simulation suggests an asymptotically self-similar singular scenario in which the distance between the two patches goes to zero in finite time while simultaneously the curvature of the boundaries blows up. Scott--Dritschel \cite{Scott-Dritschel:self-similar-sqg} started from an elliptical patch with a large ratio between its axes and found numerically that it may develop a self-similar singularity with a blowup of the curvature in the case $\al = 1$. This is consistent with the rule out of splash singularities by Gancedo--Strain \cite{Gancedo-Strain:absence-splash-muskat-SQG}.

These recent simulations appear to suggest (convincingly) the possibility of dynamical  formation of singularities. However, we emphasize that no rigorous results are known. The best result so far regarding fast growth is due to Kiselev--Nazarov \cite{Kiselev-Nazarov:simple-energy-pump-sqg}, who constructed solutions that started arbitrarily small but grew arbitrarily large in finite time. More recently, Kiselev--Ryzhik--Yao--Zlatos \cite{Kiselev-Ryzhik-Yao-Zlatos:singularity-alpha-patch-boundary} introduced a new gSQG-patch model, with a fixed boundary, and proved the formation of finite time singularities in this model for certain patches that touch the boundary at all times. At this point it is unclear whether such a scenario can lead to singularities in the classical gSQG models considered here.

\subsubsection{Global regularity and rotating solutions} The construction of nontrivial global solutions for the gSQG equations is also a challenging problem, both in the smooth and in the patch case. In fact, the only non-stationary global solutions that are known in the patch setting are very special rotating solutions. These solutions, which are periodic in time and evolve by rotating with constant angular velocity around their center of mass, are known as V-states. 

Deem--Zabusky \cite{Deem-Zabusky:vortex-waves-stationary} were the first to discover the V-states numerically in the patch case. Other authors have later improved the methods and numerically computed larger classes (see for example \cite{Wu-Overman-Zabusky:steady-state-Euler-2d,Elcrat-Fornberg-Miller:stability-vortices-cylinder,LuzzattoFegiz-Williamson:efficient-numerical-method-steady-uniform-vortices,Saffman-Szeto:equilibrium-shapes-equal-uniform-vortices}).

Hassainia--Hmidi \cite{Hassainia-Hmidi:v-states-generalized-sqg} have rigorously proved the existence of V-states in the case $0 < \alpha < 1$. They were able to show the existence of convex V-states with $C^{k}$ boundary regularity. In \cite{Castro-Cordoba-GomezSerrano:existence-regularity-vstates-gsqg}, Castro--C\'ordoba--G\'omez-Serrano were able to prove existence and $C^{\infty}$ regularity of convex global rotating solutions for the remaining open cases: $\al \in [1,2)$ for the existence, $\al \in (0,2)$ for the regularity. This boundary regularity was subsequently improved to analytic in \cite{Castro-Cordoba-GomezSerrano:analytic-vstates-ellipses}.

The problem of constructing rotating periodic in time solutions is more challenging in the smooth case \eqref{Int1}. Such solutions have only been constructed very recently by Castro--C\'ordoba--G\'omez-Serrano \cite{Castro-Cordoba-GomezSerrano:global-smooth-solutions-sqg} who found a smooth 3-fold solution that rotates uniformly (both in time and space) by perturbing from a smooth annular profile. See also \cite{Castro-Cordoba-GomezSerrano:uniformly-rotating-smooth-euler}. We remark that Dritschel \cite{Dritschel:exact-rotating-solution-sqg} had constructed nontrivial global rotating solutions with $C^{1/2}$ regularity. 

\subsection{The main theorem} Our goal in this paper is to initiate the study of {\it{stable}} global solutions of the equations \eqref{Int1} and \eqref{Int2}. Such stable solutions cannot be periodic in time and their construction requires a different mechanism. 

A natural way to look for families of global stable solutions is to perturb around certain explicit stationary solutions of the equation. This approach has been successful to produce nontrivial global solutions for many difficult quasilinear evolutions, such as the Einstein-vacuum equations, plasma models, or water-wave models. In the case of time reversible equations the main mechanism that sometimes leads to global solutions is the mechanism of {\it{dispersion}}. 

In our case of the gSQG equations, one could start by perturbing around the trivial solution $\theta\equiv 0$ of the equation \eqref{Int1}. However, there is no source of dispersion in this case and it is not clear to us how to control the solution beyond the natural time of existence $T_\e\approx \e^{-1}$ corresponding to data of size $\e$.  

One could also start from the observation that all radial functions are stationary solutions of the gSQG equations, and look for global solutions that start as small perturbations of radial functions. A natural such problem would be to consider the gSQG-patch equation \eqref{Int2}, and start with data that is a small perturbation of the characteristic function of a disk. Numerical simulations in this case seem to suggest the existence of long-term (perhaps global) smooth solutions for the gSQG-patch equation \eqref{Int2}, starting from certain small perturbations of a characteristic function of a ball of radius $1$. So far, however, we have not been able to analyze this scenario rigorously.  

In this paper we consider a simpler scenario, namely we perturb around the half-plane stationary solution corresponding to the straight interface
\begin{equation*}
z_1(x)=x,\qquad z_2(x)=0.
\end{equation*}  
For simplicity, we will assume that $C(\al)(\theta_1 - \theta_2) = 1$, $c(x,t) = 0$ and $z_1(x,t) = x$. This choice yields the following equation for $z_2(x,t) \equiv h(x,t)$:
\begin{equation}\label{zxc2}
\partial_th(x,t) = \int_{\mathbb{R}} \frac{h_{x}(x,t) - h_{x}(x-y,t)}{\big(|h(x,t)-h(x-y,t)|^2+y^2\big)^{\alpha/2}}dy.
\end{equation}
We will consider solutions that decay at $\pm \infty$, so the integral in \eqref{zxc2} is well defined for $\al \in (1,2)$.

At the linear level, the dynamics of solutions of \eqref{zxc2} are determined by the equation 
\begin{align}\label{linearization}
\pa_{t} \hat{h}(\xi,t) = i \Lambda(\xi)\hat{h}(\xi,t),\qquad \Lambda(\xi):=\gamma |\xi|^{\al-1}\xi,
\end{align}
where $\hat{h}(\xi,t)$ is the Fourier transform of $h(x,t)$ and $\gamma\in(0,\infty)$ is a constant. We notice that this linearized equation has dispersive character, due to the dispersion relation $\Lambda$, which is related to the stationary solution we perturb around. Thus one can hope to prove global regularity and decay. This is precisely our main theorem:

\begin{theorem}\label{MainThm}
Assume $\alpha\in(1,2)$, and let $N_0:=20$ and $N_1:=4$. Then there is a constant $\overline{\e}=\overline{\e}(\al)$ such that for all initial-data $h_0:\mathbb{R}\to \mathbb{R}$ satisfying the smallness conditions
\begin{equation}\label{th1}
\|h_0\|_{H^{N_0\al}}+\|x\partial_xh_0\|_{H^{N_1\al}}\leq\e_0\leq\overline{\e}
\end{equation}
there is a unique global solution $h\in C([0,\infty):H^{N_0\al}(\mathbb{R}))$ of the evolution equation \eqref{zxc2} with $h(0)=h_0$. Moreover, the solution $h$ satisfies the slow growth energy bounds
\begin{equation}\label{th2}
\|h(t)\|_{H^{N_0\al}}+\|Sh(t)\|_{H^{N_1\al}}\lesssim \e_0(1+t)^{p_0},\qquad t\in[0,\infty),
\end{equation}
where $S:=\al t\partial_t+x\partial_x$ is the scaling vector-field associated to the linear equation \eqref{linearization} and $p_0:=10^{-7}(2-\al)$, and the sharp pointwise decay bounds
\begin{align}\label{th3}
\big(2^{k/2} + 2^{N_2\al k} \big) {\|P_k h(t)\|}_{L^\infty} \lesssim \e_0(1+t)^{-1/2},\qquad t\in[0,\infty),\,k\in\mathbb{Z},
\end{align}
where $P_k$ denote the standard Littlewood-Paley projections and $N_2:=8$.  
\end{theorem}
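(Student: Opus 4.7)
\medskip

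\noindent\textbf{Proof plan.} The proof follows the general paradigm developed for global solutions of quasilinear dispersive equations: combine high-order energy estimates with a weighted (vector-field) energy estimate to control Sobolev norms with slow growth in time, and combine these with a sharp dispersive $L^\infty$ decay estimate via a bootstrap argument. Concretely, given the smallness condition \eqref{th1}, the plan is to fix $T \geq 0$ and assume the bootstrap bounds
\begin{equation*}
\|h(t)\|_{H^{N_0\al}}+\|Sh(t)\|_{H^{N_1\al}}\leq \e_1(1+t)^{p_0},\qquad \big(2^{k/2}+2^{N_2\al k}\big)\|P_k h(t)\|_{L^\infty}\leq \e_1(1+t)^{-1/2}
\end{equation*}
for $t\in[0,T]$ with $\e_1 = C\e_0$, and then improve them to the same bounds with $\e_1$ replaced by $\e_1/2$. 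Local well-posedness in $H^{N_0\al}$ and continuity of the relevant norms extend the solution past $T$, yielding a global solution.

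\smallskip

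\noindent\textbf{Step 1: Paralinearization and energy estimates.} I would first rewrite \eqref{zxc2} in paradifferential form $\partial_t h = i\La(\xi) h + T_b[h] h + \mathcal{R}[h]$, where $T_b[h]$ is a paradifferential operator of order $\al$ whose symbol depends on $h$ and its derivatives, and $\mathcal{R}[h]$ is a smoothing remainder. The point is to identify the ``quasilinear'' top-order symbol, symmetrize it, and absorb the apparent loss of $\al/2$ derivatives on each side via an energy modification. A standard computation then yields
\begin{equation*}
\tfrac{d}{dt}\|h\|_{H^{N_0\al}}^2 \lesssim \big(\|h_x\|_{L^\infty}+\|h\|_{W^{N_2\al,\infty}}\big)\|h\|_{H^{N_0\al}}^2,
\end{equation*}
and, using the pointwise decay bootstrap, the right-hand side integrates to $\e_0^2(1+t)^{2p_0}$, closing the $H^{N_0\al}$ part of the bootstrap provided $p_0 > 0$ is chosen appropriately relative to the integration constants.

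\smallskip

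\noindent\textbf{Step 2: Weighted energy estimate via the scaling field.} The vector field $S = \al t\partial_t + x\partial_x$ commutes with the linear operator $\partial_t - i\La$. Applying $S$ to \eqref{zxc2}, one obtains an equation of the form $(\partial_t - i\La)(Sh) = $ quasilinear terms involving $Sh$ plus ``lower-order'' terms in $h$ alone (this uses the homogeneity structure of the nonlinearity in \eqref{zxc2}). One then runs an energy estimate on $Sh$ at the lower regularity level $H^{N_1\al}$, obtaining a bound of the type
\begin{equation*}
\tfrac{d}{dt}\|Sh\|_{H^{N_1\al}}^2 \lesssim \|h\|_{W^{N_2\al,\infty}}\,\|Sh\|_{H^{N_1\al}}^2 + \|h\|_{H^{N_0\al}}^{2}\|h\|_{W^{N_2\al,\infty}},
\end{equation*}
which again integrates to $\e_0^2(1+t)^{2p_0}$. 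Here the gap between $N_0$ and $N_1$ is essential so that the commutator losses on $Sh$ can be traded for higher derivatives on $h$ using Sobolev interpolation.

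\smallskip

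\noindent\textbf{Step 3: Dispersive $L^\infty$ decay and closing the bootstrap.} Introducing the profile $f(t) := e^{-it\La}h(t)$, one has $\widehat{P_k h}(\xi,t) = e^{it\La(\xi)}\widehat{P_k f}(\xi,t)$. The dispersion relation $\La(\xi) = \g|\xi|^{\al-1}\xi$ satisfies $|\La''(\xi)|\sim |\xi|^{\al-1}$, so a standard stationary-phase/dyadic analysis gives
\begin{equation*}
\|e^{it\La}P_k g\|_{L^\infty} \lesssim (1+t)^{-1/2}\,2^{-(\al-1)k/2}\,\bigl(\|P_k g\|_{L^\infty}+\|P_k \widehat{g}\|_{L^1}+\ldots\bigr),
\end{equation*}
which, together with a Klainerman--Sobolev-type inequality relating $\|P_k f\|_{L^\infty_\xi}$ and derivatives of $\widehat{f}$ to $\|h\|_{L^2}+\|Sh\|_{L^2}$, converts the $H^{N_1\al}$ bound on $Sh$ into pointwise $t^{-1/2}$ decay on $P_k h$. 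The energy growth $(1+t)^{p_0}$ is absorbed by the choice of $N_2$ and the small parameter $p_0 = 10^{-7}(2-\al)$. This improves the decay bootstrap from $\e_1$ to $C\e_0$, closing the argument.

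\smallskip

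\noindent\textbf{Expected main obstacle.} The genuine difficulty is Step~1: the nonlinearity in \eqref{zxc2} is quasilinear with top-order $\al\in(1,2)$ (more singular than SQG), so one cannot avoid a careful paradifferential symmetrization and a Cauchy-integral style expansion of the singular kernel $(|h(x)-h(x-y)|^2+y^2)^{-\al/2}$. Controlling the apparent loss of $(\al-1)/2$ derivatives in the energy estimate -- together with the fact that the decay rate $t^{-1/2}$ is the critical one for quadratic nonlinearities in one dimension (so space-time resonance or modified-scattering phenomena could in principle force logarithmic corrections) -- requires an essentially sharp analysis. The slow growth $(1+t)^{p_0}$ with $p_0$ proportional to $(2-\al)$ is precisely the margin needed to absorb the residual non-resonant quadratic interactions without entering a full modified scattering scheme.
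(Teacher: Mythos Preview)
Your energy steps (Steps 1--2) are broadly in line with the paper's approach, including the paradifferential symmetrization. The genuine gap is in Step~3, and it is fatal for the scheme as written.

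You propose to recover the sharp $t^{-1/2}$ decay from the energy norms $\|h\|_{H^{N_0\al}}$ and $\|Sh\|_{H^{N_1\al}}$ via a Klainerman--Sobolev/interpolation argument. But both of those norms grow like $(1+t)^{p_0}$ in your own bootstrap, so any such argument yields at best $\|P_kh(t)\|_{L^\infty}\lesssim \e_1(1+t)^{-1/2+p_0}$, which does \emph{not} improve the decay bootstrap from $\e_1$ to $C\e_0$. The loss $t^{p_0}$ cannot be ``absorbed by the choice of $N_2$'': no frequency weight repairs a time loss. The paper closes this by adding a third bootstrap quantity, the $Z$-norm of the profile $v=e^{-it\Lambda}h$,
\[
\|v(t)\|_Z:=\big\|(|\xi|^{1/2+\beta}+|\xi|^{N_2\al+1})\widehat{v}(\xi,t)\big\|_{L^\infty_\xi},
\]
and proves it stays \emph{uniformly bounded} (no $t^{p_0}$ growth). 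The linear dispersive estimate then gives sharp $t^{-1/2}$ decay directly from $\|v\|_Z$, and the growing energies are used only as auxiliary inputs.

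Proving $\|v(t)\|_Z\lesssim\e_0$ is precisely where modified scattering enters, contrary to your last paragraph. The leading nonlinearity here is \emph{cubic} (the expansion of the kernel gives $\mathcal{N}=\sum_{n\ge1}\mathcal{N}_n$ with $\mathcal{N}_n$ of degree $2n+1$), not quadratic; in one dimension with $t^{-1/2}$ decay the cubic Duhamel contribution is $\sim t^{-1}$, which is logarithmically non-integrable. The paper computes the space-time resonances of the cubic phase $\Phi(\xi,\eta_1,\eta_2)=-\Lambda(\xi)+\Lambda(\eta_1)+\Lambda(\eta_2)+\Lambda(\xi-\eta_1-\eta_2)$, extracts the resonant piece explicitly, and removes it by the phase correction
\[
L(\xi,t)=\widetilde{c}(\xi)\int_0^t|\widehat{v}(\xi,s)|^2\,\frac{ds}{s+1},
\]
showing that $\widehat{v}(\xi,t)e^{iL(\xi,t)}$ converges in $Z$. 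This modified-scattering step is the core of the argument and cannot be bypassed; your proposal to close the decay bootstrap ``without entering a full modified scattering scheme'' would not work.
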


Our proof provides more information about the global solution $h$ as part of the bootstrap argument. In fact, the solution satisfies the main bounds \eqref{zxc6} in Proposition \ref{MainProp}. At a qualitative level, the solution $h$ remains uniformly bounded in a suitable $Z$-norm and  undergoes nonlinear (modified) scattering as $t\to\infty$. See the discussion in subsection \ref{MainIdeas} below. 

\subsection{Main ideas of the proof}\label{MainIdeas} The equation \eqref{zxc2} is a time reversible quasilinear equation. The classical mechanism to prove global regularity in such a situation has two main steps:

\setlength{\leftmargini}{1.8em}
\begin{itemize}
  \item[(1)] Prove energy estimates to propagate control of high order Sobolev and weighted norms;
\smallskip
  \item[(2)] Prove dispersion and decay of the solution over time.
\end{itemize}

The interplay of these two aspects has been present since the seminal work of Klainerman \cite{Klainerman:vector-fields-wave-equation,Klainerman:null-structure-global-existence-wave-equation} on nonlinear wave equations and vector-fields,
Shatah \cite{Shatah:normal-forms-quadratic-klein-gordon} and Simon \cite{Si} on $3$d Klein-Gordon equations and normal forms, Christodoulou-Klainerman \cite{CK} on the stability of the Minkowski space-time,
and Delort \cite{DelortKGE} on $1$d Klein-Gordon equations.

In the last few years new methods have emerged in the study of global solutions of quasilinear evolutions, inspired by the advances in semilinear theory.
The basic idea is to combine the classical energy and vector-fields methods with refined analysis of the Duhamel formula, using the Fourier transform and carefully constructed ``designer'' norms. This is the main idea of the ``method of space-time resonances'' of Germain-Masmoudi-Shatah \cite{GeMaSh,Germain-Masmoudi-Shatah:global-solutions-gravity-water-waves-annals} and Gustafson-Nakanishi-Tsai \cite{GNT1},
and of the work on plasma models and water wave models of the last author and his collaborators, in \cite{IP1,GIP,DIP,Ionescu-Pusateri:global-solutions-water-waves-2d,Ionescu-Pusateri:model2d,Ionescu-Pusateri:global-solutions-water-waves-2d-surface-tension,Deng-Ionescu-Pausader-Pusateri:global-solutions-gravity-capillary-water-waves-3d}.

We describe now in some detail these two main aspects of our proof.

\subsubsection{Energy estimates} We would like to control the growth in time of two energy-type quantities: the high order Sobolev norms of our solutions and their weighted norms. More precisely, we would like to prove that the solution $h$ satisfies energy bounds with slow growth of the form
\begin{equation}\label{explain1}
\|h(t)\|_{H^{N_0\al}}+\|Sh(t)\|_{H^{N_1\al}}\lesssim \e_0(1+t)^{p_0},\qquad t\in[0,\infty),
\end{equation}
where $S:=\al t\partial_t+x\partial_x$ is the scaling vector-field associated to the linearized equation, and $p_0\ll 1$. For this we use a paradifferential reduction, similar to the idea used recently in the study of water-wave models in \cite{Alazard-Burq-Zuily:water-wave-surface-tension, ABZ2, AD1, Ionescu-Pusateri:global-solutions-water-waves-2d-surface-tension, Deng-Ionescu-Pausader-Pusateri:global-solutions-gravity-capillary-water-waves-3d}.\footnote{Alternatively, one could try to use a change of variables as in \cite{Gancedo:existence-alpha-patch-sobolev}. This works well to control the Sobolev norms, but seems to lead to problems in the analysis of the weighted norms involving the vector-field $S$. Because of this we prefer to use here the more robust paradifferential approach.} We examine the equation \eqref{zxc2} and start by rewriting it using paradifferential calculus in the form
\begin{equation}\label{explain2}
\partial_t h=i\Lambda h+iT_{\Sigma}h+E.
\end{equation} 
Here $\Lambda(\xi)=\gamma |\xi|^{\al-1}\xi$ is as in \eqref{linearization}, $\Sigma$ is an explicit symbol of order $\alpha$, which depends quadratically on $h$, $T$ is the paradifferential operator in Weyl quantization (see subsection \ref{ParaDiffCalc} for definitions and simple properties), and $E$ is a suitable error term that satisfies cubic bounds in $h$ and does not lose derivatives (relative to $h$).

The formula \eqref{explain2} gives a good idea about the structure of the nonlinearity, but is not completely adequate to prove energy estimates. This is because the symbol $\Sigma$ in not real-valued, thus the operator $T_\Sigma$ is not self-adjoint. In fact, $\Sigma$ can be written in the form 
\begin{equation*}
\Sigma=\Sigma^\al+\Sigma^1+\Sigma^{\al-1},
\end{equation*} 
where $\Sigma^\al$ and $\Sigma^1$ are real-valued symbols of order $\al$ and $1$, and $\Sigma^{\al-1}$ is a purely imaginary symbol of order $\al-1$. To prove energy estimates we need one more step. Precisely, we define the renormalized variable $h^\ast$ (the so-called ``good variable'') by $h^\ast:=T_bh$, for a suitable symbol $b$ of order $0$. This symbol is constructed in such a way that the good variable $h^\ast$ satisfies a better evolution equation of the form
\begin{equation}\label{explain3}
\partial_t h^\ast=i\Lambda h^\ast+iT_{\Sigma^\al+\Sigma^1}h^\ast+E'.
\end{equation}
The resulting error term $E'$ still satisfies good cubic bounds with no derivative loss. 

The equation \eqref{explain3} is now suitable to prove energy bounds, first for the good variable $h^\ast$, and then for the original variable $h$. The only additional ingredient that is needed to prove the bounds \eqref{explain1} is sharp pointwise decay of the solution, i.e an estimate of the form
\begin{align}\label{explain5}
\big(2^{k/2} + 2^{N_2\al k} \big) {\|P_k h(t)\|}_{L^\infty} \lesssim \e_1(1+t)^{-1/2},\qquad t\in[0,\infty),\,k\in\mathbb{Z}.
\end{align}
This follows from the main bootstrap assumption and linear estimates.           

\subsubsection{Dispersion and decay} To close the bootstrap argument we need to prove dispersion, in a sufficiently precise way so as to be able to recover the sharp pointwise decay bounds \eqref{explain5}. Since all the energy estimates have a small $(1+t)^{p_0}$ loss, this requires an independent argument, which does not rely directly on these energy estimates. 

We use the $Z$-norm method. More precisely, we define a suitable norm, called the $Z$-norm, in such a way that $\|h(t)\|_{Z}$ is uniformly bounded as $t\to\infty$,
\begin{equation}\label{explain7}
\|h(t)\|_{Z}\lesssim \e_0.
\end{equation} 
The precise choice of the $Z$-norm is important, since control of the $Z$-norm has to complement suitably the energy control proved in the first step. Here we use a type of norms introduced recently in 2D water-wave models by Ionescu--Pusateri \cite{Ionescu-Pusateri:global-solutions-water-waves-2d,Ionescu-Pusateri:model2d,Ionescu-Pusateri:global-solutions-water-waves-2d-surface-tension}, 
\begin{equation}\label{explain8}
{\| f \|}_Z := {\big\| \big(|\xi|^{1/2+\beta} + |\xi|^{N_2\alpha+1} \big) \widehat{f}(\xi) \big\|}_{L^\infty_\xi},\qquad N_2=8,\,\beta=(2-\al)/10.
\end{equation}

To prove \eqref{explain7} we start by defining the linear profile of the solution $v(t)=e^{-it\Lambda}h(t)$. Then we write the Duhamel formula in terms of the profile $v$. The main contribution comes from the cubic nonlinear term
\begin{equation*}
i\int_{\R\times\R}m_1(\eta_1,\eta_2,\xi-\eta_1-\eta_2)e^{it\Phi(\xi,\eta_1,\eta_2)}\widehat{v}(\eta_1,t)\widehat{v}(\eta_2,t)\widehat{v}(\xi-\eta_1-\eta_2,t)\,d\eta_1 d\eta_2,
\end{equation*}
where $\Phi(\xi,\eta_1,\eta_2)=-\Lambda(\xi)+\Lambda(\eta_1)+\Lambda(\eta_2)+\Lambda(\xi-\eta_1-\eta_2)$ and $m_1$ is a suitable multiplier. 

This term is not integrable in time, due to the contribution of the space-time re\-so\-nan\-ces. To eliminate these contributions we need to add a nonlinear correction to the profile $v$. More precisely, we define the (modified) nonlinear profile $v^\ast$ by the formulas
\begin{equation*}
\widehat{v^\ast}(\xi,t):=\widehat{v}(\xi,t)e^{iL(\xi,t)},\qquad L(\xi,t):=\widetilde{c}(\xi)\int_0^t|\widehat{v}(\xi,s)|^2\frac{1}{s+1}\,ds,
\end{equation*}  
where $\widetilde{c}$ is a suitable function (see \eqref{nf50} for the precise formulas). Then we show that the nonlinear profile $v^\ast(t)$ converges in the $Z$-norm as $t\to\infty$, at a suitable rate, and prove the uniform bounds \eqref{explain7}.

\subsection{Organization} The rest of the paper is concerned with the proof of Theorem \ref{MainThm}. In section \ref{Prelims} we introduce the main notation, define the $Z$-norm, prove some important lemmas, and state the main bootstrap Proposition \ref{MainProp}. 

In the remaining two sections we prove Proposition \ref{MainProp}, along the lines described above. In section \ref{Ene} we prove the energy estimates, using paradifferential calculus, while in section \ref{Poi1} we prove the dispersive estimates, using the Duhamel formula and Fourier analysis.

\section{Preliminaries and the main bootstrap proposition}\label{Prelims}

\subsection{Notation and basic lemmas}\label{notation}
In this subsection we summarize some of our main notation and recall several basic formulas and estimates.
We fix an even smooth function $\varphi: \R\to[0,1]$ supported in $[-8/5,8/5]$ and equal to $1$ in $[-5/4,5/4]$,
and define
\begin{equation*}
\varphi_k(x) := \varphi(x/2^k) - \varphi(x/2^{k-1}) , \qquad \varphi_{\leq k}(x):=\varphi(x/2^k),
  \qquad\varphi_{\geq k}(x) := 1-\varphi(x/2^{k-1}),
\end{equation*}
for any $k\in\mathbb{Z}$. Let $P_k$, $P_{\leq k}$, and $P_{\geq k}$ the operators defined by the Fourier multipliers $\varphi_k$,
$\varphi_{\leq k}$, and $\varphi_{\geq k}$ respectively. For any interval $I\subseteq\mathbb{R}$ let
\begin{equation*}
P_{I}:=\sum_{k\in\mathbb{Z}\cap I}P_k.
\end{equation*} 

For any $k\in\mathbb{Z}$ let
\begin{equation*}
k^+:=\max(k,0),\qquad k^-:=\min(k,0).
\end{equation*}

\subsubsection{Multipliers and associated operators} We will often work with multipliers $m:\mathbb{R}^2\to\mathbb{C}$ or $m:\mathbb{R}^3\to\mathbb{C}$, and operators defined by such multipliers. We define the class of symbols
\begin{equation}
\label{Sinfty}
S^\infty := \{m: \R^d \to \mathbb{C} : \,m \text{ continuous and } {\| m \|}_{S^\infty} := {\|\mathcal{F}^{-1}(m)\|}_{L^1} < \infty \}.
\end{equation}
We summarize below some properties of multipliers and associated operators (see \cite[Lemma 5.2]{Ionescu-Pusateri:global-solutions-water-waves-2d} for the proof).

\begin{lemma}\label{touse}
 (i) We have $S^\infty\hookrightarrow L^\infty(\mathbb{R}^d)$. If $m,m'\in S^\infty$ then $m\cdot m'\in S^\infty$ and
\begin{equation}\label{al8}
\|m\cdot m'\|_{S^\infty}\leq \|m\|_{S^\infty}\|m'\|_{S^\infty}.
\end{equation}
Moreover, if $m\in S^\infty$, $A:\mathbb{R}^d\to\mathbb{R}^d$ is an invertible linear transformation, $v\in\mathbb{R}^d$, and $m_{A,v}(\xi):=m(A\xi+v)$ then
\begin{equation}\label{al8.1}
\|m_{A,v}\|_{S^\infty}=\|m\|_{S^\infty}.
\end{equation}

(ii) Assume $p,q,r\in[1,\infty]$ satisfy $1/p+1/q=1/r$, and $m\in S^\infty$. Then, for any $f,g\in L^2(\mathbb{R})$,
\begin{equation}\label{mk6}
\begin{split}
&\|T_m(f,g)\|_{L^r} \lesssim \|m\|_{S^\infty} \|f\|_{L^p} \|g\|_{L^q}\\
&\text{where }\qquad\mathcal{F}\{T_m(f,g)\}(\xi):=\int_{\mathbb{R}}m(\xi-\eta,\eta)\widehat{f}(\xi-\eta)\widehat{g}(\eta)\,d\eta.
\end{split}
\end{equation}
In particular, if $1/p+1/q+1/r=1$ then
\begin{equation}\label{mk6.1}
\Big|\int_{\mathbb{R}\times\mathbb{R}}m(\xi-\eta,\eta)\widehat{f}(\xi-\eta)\widehat{g}(\eta)\widehat{h}(\xi)\,d\eta d\xi\Big|\lesssim \|m\|_{S^\infty} \|f\|_{L^p} \|g\|_{L^q}\|h\|_{L^r}.
\end{equation}

(iii) More generally, if $d\geq 2$, $p_1,\ldots p_d,q\in[1,\infty]$ satisfy $1/p_1+\ldots+1/p_d=1/q$, and $f_1,\ldots,f_d\in L^2(\mathbb{R})$ then
\begin{equation}\label{mk6.5}
\|T_m(f_1,\ldots,f_d)\|_{L^q}\lesssim \|m\|_{S^\infty}\|f_1\|_{L^{p_1}}\ldots \|f_d\|_{L^{p_d}},
\end{equation}
where
\begin{equation*}
\begin{split}
\mathcal{F}\{T_m(f_1,\ldots, f_d)\}(\xi):=&\int_{\mathbb{R}^{d-1}}m(\xi-\eta_2-\ldots-\eta_d,\eta_2,\ldots,\eta_{d})\\
&\times\widehat{f_1}(\xi-\eta_2-\ldots-\eta_{d})\widehat{f_2}(\eta_2)\ldots \widehat{f_d}(\eta_{d})\,d\underline{\eta},
\end{split}
\end{equation*}

and $\underline{\eta} = (\eta_2,\ldots,\eta_{d})$.
\end{lemma}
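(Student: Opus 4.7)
The plan is to reduce all three parts to the physical-space representation of $S^\infty$ symbols, combined with Young's and H\"older's inequalities. The central observation is that an $S^\infty$ symbol $m$ is, by definition, the Fourier transform of some $\mu \in L^1$, so any operator with such a symbol may be realized as a convolution against $\mu$, and this makes each of the desired estimates immediate.

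For part (i), I would set $\mu := \mathcal{F}^{-1}(m)$ and $\mu' := \mathcal{F}^{-1}(m')$. The embedding $S^\infty\hookrightarrow L^\infty$ is Fourier inversion: $|m(\xi)|\leq \|\mu\|_{L^1}$. The product bound \eqref{al8} follows from the identity $\mathcal{F}^{-1}(m\cdot m') = \mu\ast\mu'$ together with Young's inequality $\|\mu\ast\mu'\|_{L^1}\leq \|\mu\|_{L^1}\|\mu'\|_{L^1}$. For the affine invariance \eqref{al8.1}, a direct change of variables yields
\begin{equation*}
\mathcal{F}^{-1}(m_{A,v})(x) = |\det A|^{-1}\,e^{-ix\cdot A^{-1}v}\,\mu(A^{-T}x);
\end{equation*}
the modulation factor has unit modulus, and $|\det A|^{-1}$ is exactly absorbed by the Jacobian of the substitution $y = A^{-T}x$, so the $L^1$ norm is preserved.

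For part (ii), the key step is to derive the physical-space representation
\begin{equation*}
T_m(f,g)(z) = c\int_{\mathbb{R}^2}\mu(x_1,x_2)\,f(z-x_1)\,g(z-x_2)\,dx_1\,dx_2,
\end{equation*}
where $c$ is a harmless absolute constant depending only on Fourier normalization. To obtain this, I would write $m(\zeta_1,\zeta_2)=\int\mu(x_1,x_2)\,e^{-i(x_1\zeta_1+x_2\zeta_2)}\,dx_1\,dx_2$, substitute into the definition of $T_m$ with $\zeta_1=\xi-\eta$, $\zeta_2=\eta$, and perform the $\zeta_1,\zeta_2$ integrations first, recognizing them as inverse Fourier transforms of $\widehat{f}$ and $\widehat{g}$ evaluated at $z-x_1$ and $z-x_2$. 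Given this representation, Minkowski's integral inequality in $L^r_z$ followed by H\"older's inequality in $z$ yields the uniform bound $\|f(\cdot-x_1)g(\cdot-x_2)\|_{L^r}\leq \|f\|_{L^p}\|g\|_{L^q}$, so $\|T_m(f,g)\|_{L^r}\lesssim \|\mu\|_{L^1}\|f\|_{L^p}\|g\|_{L^q}$, which is \eqref{mk6}. The trilinear bound \eqref{mk6.1} is the Parseval-dual formulation: pair against $h$ and apply \eqref{mk6} followed by H\"older.

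Part (iii) is identical in structure, simply using $d-1$ convolution variables in the $\mu$-representation. There is no serious obstacle anywhere in the argument; the only thing to keep track of is the bookkeeping of Fourier-normalization constants, which affects only implicit constants. The content of the lemma is that operators with $S^\infty$ symbols are precisely those that behave like convolutions with $L^1$ kernels, which is the entire reason the norm is introduced.
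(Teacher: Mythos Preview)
Your proposal is correct and is precisely the standard argument for this lemma; the paper itself does not give a proof but refers to \cite[Lemma 5.2]{Ionescu-Pusateri:global-solutions-water-waves-2d}, where exactly this physical-space representation and Minkowski--H\"older argument is carried out. There is nothing to add.
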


Moreover, if $f_1,\ldots,f_d$ are suitable functions defined on $\mathbb{R}\times I$ and $S=\al t\partial_t+x\partial_x$ is the scaling vector-field then
\begin{equation}\label{mk6.9}
S[T_m(f_1,\ldots,f_d)]=T_m(Sf_1,f_2,\ldots,f_d)+\ldots+T_m(f_1,\ldots,f_{d-1},Sf_d)+T_{\widetilde{m}}(f_1,\ldots,f_d),
\end{equation}
where
\begin{equation}\label{mk6.10}
\widetilde{m}(\xi_1,\ldots \xi_d):=-(\xi_1\partial_{\xi_1}+\ldots+\xi_d\partial_{\xi_d})m(\xi_1,\ldots \xi_d).
\end{equation}

\subsubsection{An interpolation lemma} We will use the following simple lemma, see \cite[Lemma 4.3]{Ionescu-Pusateri:model2d} for the proof.

\begin{lemma}\label{interpolation}
For any $k \in \mathbb{Z}$, and $f\in L^2(\mathbb{R})$ we have
\begin{equation}
\label{interp1}
{\big\| \widehat{P_kf} \big\|}_{L^\infty}^2 \lesssim {\big\| P_kf \big\|}_{L^1}^2 \lesssim
  2^{-k} {\|\widehat{f}\|}_{L^2} \big[ 2^k {\|\partial \widehat{f}\|}_{L^2} + {\|\widehat{f}\|}_{L^2} \big].
\end{equation}
\end{lemma}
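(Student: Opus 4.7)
The first inequality in \eqref{interp1} is just Hausdorff--Young at the endpoint: for any $g\in L^{1}(\R)$ one has $\|\widehat g\|_{L^\infty}\le \|g\|_{L^1}$, applied with $g=P_kf$. The content of the lemma is the second inequality, and my plan is to deduce it from a one-parameter weighted Cauchy--Schwarz estimate followed by optimization of the parameter (via Plancherel applied on the Fourier side).

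Concretely, for any $\lambda>0$, I would write
\begin{equation*}
\|P_kf\|_{L^1} \;=\; \int_{\R} (1+\lambda^{2}x^{2})^{-1/2}\,(1+\lambda^{2}x^{2})^{1/2}|P_kf(x)|\,dx
\end{equation*}
and apply Cauchy--Schwarz. The weight is chosen precisely so that the scale-free integral $\int_{\R}(1+\lambda^{2}x^{2})^{-1}dx = \pi/\lambda$ can be computed exactly (change of variables $u=\lambda x$), giving
\begin{equation*}
\|P_kf\|_{L^1}^{2}\;\lesssim\;\lambda^{-1}\bigl(\|P_kf\|_{L^{2}}^{2}+\lambda^{2}\|xP_kf\|_{L^{2}}^{2}\bigr).
\end{equation*}
By Plancherel, $\|P_kf\|_{L^{2}}^{2}=\|\varphi_{k}\widehat f\|_{L^{2}}^{2}\le \|\widehat f\|_{L^{2}}^{2}$, and
\begin{equation*}
\|xP_kf\|_{L^{2}}^{2}\;=\;\|\partial_{\xi}(\varphi_{k}\widehat f)\|_{L^{2}}^{2}
\;\lesssim\;\|\varphi_{k}'\widehat f\|_{L^{2}}^{2}+\|\varphi_{k}\partial \widehat f\|_{L^{2}}^{2}
\;\lesssim\;2^{-2k}\|\widehat f\|_{L^{2}}^{2}+\|\partial\widehat f\|_{L^{2}}^{2},
\end{equation*}
using $\|\varphi_{k}'\|_{L^{\infty}}\lesssim 2^{-k}$.

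Putting these together, $\|P_kf\|_{L^{1}}^{2}\lesssim \lambda^{-1}A+\lambda B$ with $A=\|\widehat f\|_{L^{2}}^{2}$ and $B=\|\partial\widehat f\|_{L^{2}}^{2}+2^{-2k}\|\widehat f\|_{L^{2}}^{2}$. The minimum over $\lambda>0$ is $2\sqrt{AB}$, which I would bound by $\|\widehat f\|_{L^{2}}\bigl(\|\partial\widehat f\|_{L^{2}}+2^{-k}\|\widehat f\|_{L^{2}}\bigr)=2^{-k}\|\widehat f\|_{L^{2}}\bigl(2^{k}\|\partial\widehat f\|_{L^{2}}+\|\widehat f\|_{L^{2}}\bigr)$, matching the right-hand side of \eqref{interp1}. (Equivalently, one can split into the two cases $2^{k}\|\partial\widehat f\|_{L^{2}}\gtrless \|\widehat f\|_{L^{2}}$ and choose $\lambda=\|\widehat f\|_{L^{2}}/\|\partial\widehat f\|_{L^{2}}$ or $\lambda=2^{k}$ accordingly, which avoids any square roots.) There is no real obstacle here; the only thing to be careful about is that the naive choice $\lambda=2^{k}$ produces the weaker AM--GM form $2^{-k}\|\widehat f\|_{L^{2}}^{2}+2^{k}\|\partial\widehat f\|_{L^{2}}^{2}$, which is not tight enough, so the optimization of $\lambda$ (or the two-case split) is the only genuinely necessary step.
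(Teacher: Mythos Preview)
Your argument is correct and is the standard proof of this elementary interpolation inequality: Cauchy--Schwarz with the weight $(1+\lambda^{2}x^{2})^{-1/2}$, Plancherel to translate $\|xP_kf\|_{L^2}$ into $\|\partial_\xi(\varphi_k\widehat f)\|_{L^2}$, and then optimization in $\lambda$. Note that the paper itself does not supply a proof of this lemma but simply cites \cite[Lemma~4.3]{Ionescu-Pusateri:model2d}; the argument given there is essentially the one you wrote.
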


\subsubsection{A dispersive estimate} The following lemma is our main linear dispersive estimate:

\begin{lemma}\label{dispersive}
Assume that $\Lambda(\xi)=\gamma\xi|\xi|^{\alpha-1}$ as before. Then, for any $t\in\mathbb{R}\setminus\{0\}$, $k\in\mathbb{Z}$, and $f\in L^2(\mathbb{R})$ we have
\begin{equation}\label{disperse}
 \|e^{i t \Lambda}P_kf\|_{L^\infty}\lesssim |t|^{-1/2}2^{k(1-\alpha/2)}\|\widehat{f}\|_{L^\infty}+|t|^{-3/4}2^{-k(3\alpha/4-1/2)}\big[2^k\|\partial \widehat{f}\|_{L^2}+\|\widehat{f}\|_{L^2}\big]
\end{equation}
and
\begin{equation}\label{disperseEa}
 \|e^{i t \Lambda}P_kf\|_{L^\infty}\lesssim |t|^{-1/2}2^{k(1-\alpha/2)}\|f\|_{L^1}.
\end{equation}
\end{lemma}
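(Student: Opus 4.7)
\textbf{Proof plan for Lemma \ref{dispersive}.} Represent
\begin{equation*}
e^{it\Lambda}P_kf(x) = \frac{1}{2\pi}\int_{\mathbb{R}}e^{i\phi(\xi;x,t)}\widehat{f}(\xi)\varphi_k(\xi)\,d\xi,\qquad \phi(\xi;x,t):=x\xi+t\gamma\xi|\xi|^{\alpha-1}.
\end{equation*}
On $\mathrm{supp}(\varphi_k)$ one has $\partial_\xi\phi = x + t\gamma\alpha|\xi|^{\alpha-1}\mathrm{sgn}(\xi)$ and $|\partial_\xi^2\phi|\simeq\mu$, where $\mu:=|t|\,2^{k(\alpha-2)}$, with $\partial_\xi^2\phi$ of constant sign on each connected component of $\mathrm{supp}(\varphi_k)$ (which is away from the origin). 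The estimate \eqref{disperseEa} follows immediately: writing $e^{it\Lambda}P_kf = K_{t,k}\ast f$ with $K_{t,k}(y):=(2\pi)^{-1}\int e^{i\phi(\xi;y,t)}\varphi_k(\xi)\,d\xi$, Van der Corput's second-derivative test gives $\|K_{t,k}\|_{L^\infty}\lesssim\mu^{-1/2}=|t|^{-1/2}2^{k(1-\alpha/2)}$, and Young's inequality concludes.

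For \eqref{disperse} I perform a refined stationary-phase analysis. Let $\xi_0=\xi_0(x,t)$ denote the (possibly non-existent) critical point of $\phi$, characterized by $|\xi_0|^{\alpha-1}=-x\,\mathrm{sgn}(\xi_0)/(t\gamma\alpha)$. If $|\xi_0|\notin[2^{k-5},2^{k+5}]$ or $\xi_0$ does not exist, then $|\partial_\xi\phi|\gtrsim|t|\,2^{k(\alpha-1)}$ uniformly on $\mathrm{supp}(\varphi_k)$, and a single integration by parts combined with Cauchy--Schwarz produces a contribution much smaller than claimed. In the non-trivial regime $|\xi_0|\in[2^{k-5},2^{k+5}]$, Taylor expansion of $\partial_\xi\phi$ around $\xi_0$ combined with $|\partial_\xi^2\phi|\simeq\mu$ yields the sharp lower bound $|\partial_\xi\phi(\xi)|\gtrsim\mu|\xi-\xi_0|$ on $\mathrm{supp}(\varphi_k)$. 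In this regime I decompose
\begin{equation*}
e^{it\Lambda}P_kf(x)=\widehat{f}(\xi_0)\cdot\frac{1}{2\pi}\int e^{i\phi}\varphi_k\,d\xi+\frac{1}{2\pi}\int e^{i\phi}\bigl(\widehat{f}(\xi)-\widehat{f}(\xi_0)\bigr)\varphi_k\,d\xi=:I_A+I_B,
\end{equation*}
so that the kernel bound from the proof of \eqref{disperseEa} immediately gives $|I_A|\lesssim\|\widehat{f}\|_{L^\infty}|t|^{-1/2}2^{k(1-\alpha/2)}$.

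To estimate $I_B$, introduce a smooth cutoff $\chi((\xi-\xi_0)/\delta)$ with $\delta>0$ to be optimized, and split $I_B=I_B^{\mathrm{near}}+I_B^{\mathrm{far}}$ according to $|\xi-\xi_0|\lesssim\delta$ or $|\xi-\xi_0|\gtrsim\delta$. On the near region I discard oscillations and use the Sobolev bound $|\widehat{f}(\xi)-\widehat{f}(\xi_0)|\leq|\xi-\xi_0|^{1/2}\|\partial\widehat{f}\|_{L^2}$, producing $|I_B^{\mathrm{near}}|\lesssim\delta^{3/2}\|\partial\widehat{f}\|_{L^2}$. On the far region I integrate by parts once via $e^{i\phi}=(i\partial_\xi\phi)^{-1}\partial_\xi e^{i\phi}$; the four resulting terms, coming from the derivative falling on $\widehat{f}-\widehat{f}(\xi_0)$, the amplitude $\varphi_k$, the cutoff $\chi$, or the weight $1/\partial_\xi\phi$, are each bounded, using $|\partial_\xi\phi|\gtrsim\mu|\xi-\xi_0|$ and Cauchy--Schwarz, by $\|\partial\widehat{f}\|_{L^2}/(\mu\sqrt{\delta})$ plus an auxiliary $2^{-k}\|\widehat{f}\|_{L^2}/(\mu\sqrt{\delta})$ arising from $\varphi_k'$. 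Choosing $\delta=\mu^{-1/2}$ balances the near and far contributions and yields $|I_B|\lesssim\mu^{-3/4}\bigl(\|\partial\widehat{f}\|_{L^2}+2^{-k}\|\widehat{f}\|_{L^2}\bigr)$, which rewrites exactly as the second term in \eqref{disperse}.

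The main technical obstacle I anticipate lies in the bookkeeping on the far region: each of the four integration-by-parts terms must respect the same $1/(\mu\sqrt{\delta})$ ceiling, and in particular the boundary-type contribution from $\chi'/\delta$, together with the $\varphi_k'$ piece at the edges of the annulus, requires simultaneous use of the Sobolev bound on $\widehat{f}-\widehat{f}(\xi_0)$ and the sharp linear-vanishing estimate $|\partial_\xi\phi|\gtrsim\mu|\xi-\xi_0|$ in order to avoid a loss of a power of $\delta$ or a logarithmic loss that would spoil the optimization $\delta=\mu^{-1/2}$.
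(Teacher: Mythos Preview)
Your proof is correct and reaches the same conclusion, but it is organized differently from the paper's argument. The paper does not subtract $\widehat f(\xi_0)$; instead it performs a full dyadic decomposition $\sum_{l}J_l^{\pm}$ in the distance $|\xi-\xi_0^{\pm}|\approx 2^l$ to the critical point, bounds $|J_l^{\pm}|\lesssim 2^l\|\widehat f\|_{L^\infty}$ trivially for $2^l\leq \mu^{-1/2}$, and integrates by parts once for $2^l\geq \mu^{-1/2}$, summing the resulting geometric series. Your splitting $I_A+I_B$ with the single near/far cut at $\delta=\mu^{-1/2}$ is a cleaner way to decouple the $\|\widehat f\|_{L^\infty}$ contribution (all of it lands in $I_A$ via the Van der Corput kernel bound) from the $\|\partial\widehat f\|_{L^2}$ contribution; the price is that you must invoke the H\"older--Sobolev bound $|\widehat f(\xi)-\widehat f(\xi_0)|\leq|\xi-\xi_0|^{1/2}\|\partial\widehat f\|_{L^2}$ repeatedly in $I_B$, whereas the paper never uses it. One small omission: both arguments silently require the reduction to $|t|\gtrsim 2^{-\alpha k}$ (otherwise the trivial bound $2^k\|\widehat f\|_{L^\infty}$ already beats the right-hand side of \eqref{disperse}); the paper states this at the outset, and you will need it too, in particular to check that the $\varphi_k'$ term in $I_B^{\mathrm{far}}$, which after the Sobolev bound contributes $2^{-k/2}\mu^{-1}\|\partial\widehat f\|_{L^2}$, is indeed dominated by $\mu^{-3/4}\|\partial\widehat f\|_{L^2}$.
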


\begin{proof} This is similar to the proof of Lemma 4.2 in \cite{Ionescu-Pusateri:model2d}. For \eqref{disperse} it suffices to prove that
\begin{equation}\label{disp1}
\begin{split}
\Big|\int_{\mathbb{R}}&e^{it\Lambda(\xi)}e^{ix\xi}\widehat{f}(\xi)\varphi_k(\xi)\,d\xi\Big|\\
&\lesssim |t|^{-1/2}2^{k(1-\alpha/2)}\|\widehat{f}\|_{L^\infty}+|t|^{-3/4}2^{-k(3\alpha/4-1/2)}\big[2^k\|\partial \widehat{f}\|_{L^2}+\|\widehat{f}\|_{L^2}\big]
\end{split}
\end{equation}
for any $t\in\mathbb{R}\setminus\{0\}$ and $x\in\mathbb{R}$. The left-hand side of \eqref{disp1} is clearly bounded by $C2^k\|\widehat{f}\|_{L^\infty}$. Therefore in proving \eqref{disp1} we may assume $|t|\geq 2^{20}2^{-\alpha k}$.

Let $\Psi(\xi):=t\Lambda(\xi)+x\xi$ and notice that 
\begin{equation}\label{disp2.02}
\Psi'(\xi)=t\gamma \alpha|\xi|^{\alpha-1}+x\qquad\text{ and }\qquad \Psi''(\xi)=t\gamma \alpha(\alpha-1)|\xi|^{\alpha-2}\mathrm{sgn}\,(\xi).
\end{equation}
If $|\Psi'(\xi)|\gtrsim |t|2^{(\alpha-1)k}$ in the support of the integral in the left-hand side of \eqref{disp1} then integrate by parts in $\xi$ to estimate this integral by
\begin{equation*}
C\int_{\mathbb{R}}\frac{1}{|t|2^{(\alpha-1)k}}\big[|\partial\widehat{f}(\xi)|+2^{-k}|\widehat{f}(\xi)|\big]\varphi_{[k-2,k+2]}(\xi)\,d\xi\lesssim \frac{2^{-k/2}}{|t|2^{(\alpha-1)k}}\big[2^k\|\partial \widehat{f}\|_{L^2}+\|\widehat{f}\|_{L^2}\big],
\end{equation*}
which suffices to prove \eqref{disp1}, in view of the assumption $|t|\geq 2^{-\alpha k}$.

It remains to prove the bound \eqref{disp1} when $|t\gamma \alpha |\xi|^{\alpha-1}+x|\ll |t|2^{(\alpha-1)k}$ for some $\xi$ with $|\xi|\in[2^{k-4},2^{k+4}]$. This is possible only if $x/t<0$. Let $\xi_0^{\pm}\in\mathbb{R}$ denote the solutions of the equation $\Psi'(\xi)=0$, i.e.
\begin{equation*}
 \xi_0^{\pm}:=\pm \Big|\frac{-x}{\gamma\alpha t}\Big|^{1/(\alpha-1)},
\end{equation*}
and notice that $|\xi_0^{\pm}|\approx 2^{k}$. We estimate 
\begin{equation}\label{disp6}
\Big|\int_{\mathbb{R}}e^{it\Lambda(\xi)}e^{ix\xi}\widehat{f}(\xi)\varphi_k(\xi)\,d\xi\Big|\leq\sum_{l\leq k+40}\big[|J_l^{+}|+|J_l^{-}|\big],
\end{equation}
where, for any $l\geq l_0$,
\begin{equation*}
J_{l}^{\pm}:=\int_{\mathbb{R}}e^{i\Psi(\xi)}\cdot \widehat{f}(\xi)\varphi_k(\xi)\mathbf{1}_{\pm}(\xi)\varphi_l(\xi-\xi_0^{\pm})\,d\xi.
\end{equation*}

Clearly
\begin{equation}\label{disp6.1}
 \sum_{2^l\leq 2^{k(1-\alpha/2)}|t|^{-1/2}}|J_l^{\pm}|\lesssim \sum_{2^l\leq 2^{k(1-\alpha/2)}|t|^{-1/2}}2^{l}\|\widehat{f}\|_{L^\infty}\lesssim 2^{k(1-\alpha/2)}|t|^{-1/2}\|\widehat{f}\|_{L^\infty}.
\end{equation}
On the other hand, since $|\Psi'(\xi)|\gtrsim 2^{k(\alpha-2)}|t|2^l$ in the support of the integral defining $J_l^{\pm}$, if $2^l\in [2^{k(\alpha-1/2)}|t|^{-1/2},2^{k+40}]$ then we can integrate by parts to estimate
\begin{equation*}
|J_l^{\pm}|\lesssim \frac{2^{k(2-\alpha)}}{|t|2^l}\int_{\mathbb{R}}\big[|\partial\widehat{f}(\xi)|+2^{-l}|\widehat{f}(\xi)|\big]\varphi_{\leq l+4}(\xi-\xi_0^{\pm})\,d\xi\lesssim \frac{2^{k(2-\alpha)}}{|t|2^l}\|\widehat{f}\|_{L^\infty}+\frac{2^{k(2-\alpha)}}{|t|2^{l/2}}\|\partial\widehat{f}\|_{L^2}.
\end{equation*}
This suffices to control the sum of $|J_l^{\pm}|$ over $2^l\geq 2^{k(1-\alpha/2)}|t|^{-1/2}$ as claimed in \eqref{disp1}. The full bound \eqref{disp1} follows using also \eqref{disp6} and \eqref{disp6.1}.

To prove the dispersive bound \eqref{disperseEa} we notice that $e^{it\Lambda}P_kf=f\ast G_k$ where 
\begin{equation*}
G_k=c\int_{\mathbb{R}}e^{it\Lambda(\xi)}e^{ix\xi}\varphi_k(\xi)\,d\xi.
\end{equation*}
Clearly, $\|G_k\|_{L^\infty}\lesssim 2^k$. It follows from \eqref{disp1} that $\|G_k\|_{L^\infty}\lesssim |t|^{-1/2}2^{k(1-\alpha/2)}+|t|^{-3/4}2^{k(1-3\alpha/4)}$. The dispersive bound \eqref{disperseEa} follows by considering the two cases $2^{-\alpha k}\leq |t|$ and $2^{-\alpha k}\geq |t|$. 
\end{proof}

\subsection{Linearization and expansion of the nonlinearity} Recall the main equation \eqref{zxc2}. To linearize it we write
\begin{equation}\label{Poi4}
\partial_t h=i\Lambda h+\mathcal{N},
\end{equation}  
where
\begin{equation}\label{Poi4.5}
i(\Lambda h)(x):=\int_{\mathbb{R}} \frac{h'(x) - h'(x-y)}{|y|^\alpha}dy,
\end{equation}
and
\begin{equation}\label{Poi5}
\mathcal{N}(x):=\int_{\mathbb{R}} \Big\{\frac{h_{x}(x) - h_{x}(x-y)}{\big(|h(x)-h(x-y)|^2+y^2\big)^{\alpha/2}}-\frac{h_{x}(x) - h_{x}(x-y)}{|y|^\alpha}\Big\}\,dy.
\end{equation}
We examine first the linear part, and notice that
\begin{equation*}
i\widehat{\Lambda h}(\xi)=i\xi\widehat{h}(\xi)\int_{\mathbb{R}} \frac{1-e^{-iy\xi}}{|y|^\alpha}dy.
\end{equation*}
An easy calculation shows that $\widehat{\Lambda h}(\xi)=\Lambda(\xi)\widehat{h}(\xi)$ where
\begin{equation}\label{Poi7}
\Lambda(\xi):=\gamma\xi|\xi|^{\alpha-1},\qquad \gamma:=\int_{\mathbb{R}} \frac{1-\cos y}{|y|^\alpha}dy=\frac{2\Gamma(2-\al)\sin(\al\pi/2)}{\al-1}\in(0,\infty).
\end{equation} 

Using the formal expansion formula
\begin{equation}\label{Nonl1}
(1+\rho)^{-\alpha/2}=1+\sum_{n\geq 1}d_n\rho^n,\qquad d_n=\frac{(-\alpha/2)(-\alpha/2-1)\cdot\ldots\cdot (-\alpha/2-n+1)}{n!},
\end{equation}
where $|\rho|<1$, we write 
\begin{equation}\label{Nonl2}
\mathcal{N}=\sum_{n\geq 1}\mathcal{N}_{n},
\end{equation}
where
\begin{equation}\label{Nonl3}
\mathcal{N}_{n}(x):=d_{n}\int_{\mathbb{R}} \frac{h_{x}(x) - h_{x}(x-y)}{|y|^\alpha}\Big(\frac{h(x)-h(x-y)}{y}\Big)^{2n}\,dy.
\end{equation}

Since   
\begin{equation*}
\frac{h(x)-h(x-y)}{y}=\frac{1}{2\pi}\int_{\mathbb{R}}\widehat{h}(\eta)e^{ix\eta}\frac{1-e^{-i y\eta}}{y}\,d\eta
\end{equation*}
we can symmetrize and rewrite $\mathcal{N}_n$ in the Fourier space in the form
\begin{equation}\label{nonl4}
\widehat{\mathcal{N}_n}(\xi) = \frac{i}{2n+1}\int_{\RR^{2n}} \hat{h}(\eta_1) \hat{h}(\eta_2)\ldots \hat{h}(\eta_{2n}) \hat{h}\left(\xi-\sum_{i=1}^{2n}\eta_{i}\right) m_n\left(\eta_1,\eta_2,\ldots,\eta_{2n},\xi-\sum_{i=1}^{2n}\eta_{i}\right) d\underline{\eta},
\end{equation}
where $\underline{\eta}=(\eta_1,\ldots,\eta_{2n})$ and 
\begin{align}\label{non4.1}
m_n(\lambda_1,\lambda_2,\ldots,\lambda_{2n+1}):= \frac{d_n}{(2\pi)^{2n}}\left(\sum_{i=1}^{2n+1}\lambda_i\right)\int_{\RR} \prod_{i=1}^{2n+1}\left(\frac{1-e^{-i\lambda_i y}}{y}\right) \frac{dy}{|y|^{\al-1}\sign(y)}.
\end{align}
We will show later, in Lemma \ref{mbounds}, that the multipliers $m_n$, $n\geq 1$, are real-valued, odd, and homogeneous of degree $2n+\al$. They also satisfy suitable symbol-type bounds in $S^\infty$ which allow us to estimate the associated multilinear operators.

\subsection{Weyl paradifferential calculus}\label{ParaDiffCalc} We will use paradifferential calculus in section \ref{Ene} to prove high order energy estimates. In this subsection we summarize the results we need, which are all standard. We refer the reader to \cite[Appendix A]{Deng-Ionescu-Pausader-Pusateri:global-solutions-gravity-capillary-water-waves-3d} for the (elementary) proofs.

We recall first the definition of paradifferential operators (Weyl quantization): given a symbol $a=a(x,\zeta):\mathbb{R}\times\mathbb{R}\to\mathbb{C}$, we define the operator $T_a$ by
\begin{equation}\label{Tsigmaf2}
\begin{split}
\mathcal{F}\left\{T_{a}f\right\}(\xi)=\frac{1}{2\pi}\int_{\mathbb{R}}\chi\Big(\frac{\vert\xi-\eta\vert}{\vert\xi+\eta\vert}\Big)\widetilde{a}(\xi-\eta,(\xi+\eta)/2)\widehat{f}(\eta)d\eta,
\end{split}
\end{equation}
where $\widetilde{a}$ denotes the partial Fourier transform of $a$ in the first coordinate and $\chi=\varphi_{\le-20}$. We define the Poisson bracket between two symbols $a$ and $b$ by the formula
\begin{equation}\label{Poisson}
\{a,b\}:=\partial_xa\partial_\zeta b-\partial_\zeta a\partial_x b.
\end{equation}

For $q\in [1,\infty]$ and $l\in \mathbb{R}$ we define $\mathcal{L}^q_l$ as the space of symbols defined by the norm
\begin{equation}\label{nor1}
\begin{split}
&\|a\|_{\mathcal{L}^q_l}:=\sup_{\zeta\in\mathbb{R}^2}(1+|\zeta|^2)^{-l/2}\|\,|a|(.,\zeta)\|_{L^q_x},\\
&|a|(x,\zeta):=\sum_{\beta\leq 20,\,\alpha\leq 4}(1+|\zeta|^2)^{\beta/2}|(\partial^\beta_\zeta \partial^\alpha_x a)(x,\zeta)|.
\end{split}
\end{equation}
The index $l$ is called the {\it{order}} of the symbol, and it measures the contribution of the symbol in terms of derivatives on $f$. Notice that we have the simple product rule
\begin{equation}\label{nor2}
\|ab\,\|_{\mathcal{L}^p_{l_1+l_2}}\lesssim \|a\|_{\mathcal{L}^q_{l_1}}\|b\|_{\mathcal{L}^r_{l_2}},\qquad 1/p=1/q+1/r.
\end{equation}

An important property of paradifferential operators is that they behave well with respect to products. More precisely:

\begin{lemma}\label{PropProd} (i) If $1/p=1/q+1/r$ and $k\in\mathbb{Z}$, and $l\in [-10,10]$ then
\begin{equation}
\label{LqBdTa}
\Vert P_kT_af\Vert_{L^p}\lesssim 2^{lk^+}\Vert a\Vert_{\mathcal{L}^q_l}\Vert P_{[k-2,k+2]}f\Vert_{L^r}.
\end{equation}

(ii) Assume $p,q_1,q_2,r\in[1,\infty]$ satisfy $1/p=1/q_1+1/q_2+1/r$, $k\in\mathbb{Z}$, and $a\in \mathcal{L}^{q_1}_{l_1}$, $b\in\mathcal{L}^{q_2}_{l_2}$ are symbols, $l_1,l_2\in [-4,4]$. Then 
\begin{equation}
\label{WeylB1}
2^{k^+}\Vert P_k(T_aT_b-T_{ab})f\Vert_{L^p}\lesssim (2^{l_1k^+}\Vert a\Vert_{\mathcal{L}^{q_1}_{l_1}})(2^{l_2k^+}\Vert b\Vert_{\mathcal{L}^{q_2}_{l_2}})\cdot\Vert P_{[k-4,k+4]}f\Vert_{L^r},
\end{equation}
and
\begin{equation}
\label{WeylB2}
2^{2k^+}\Vert P_k(T_aT_b-T_{ab}-(i/2)T_{\{a,b\}})f\Vert_{L^p}\lesssim (2^{l_1k^+}\Vert a\Vert_{\mathcal{L}^{q_1}_{l_1}})(2^{l_2k^+}\Vert b\Vert_{\mathcal{L}^{q_2}_{l_2}})\cdot\Vert P_{[k-4,k+4]}f\Vert_{L^r}.
\end{equation}
\end{lemma}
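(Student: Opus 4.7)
Both parts are standard paradifferential estimates, and reduce to the bilinear $S^\infty$ framework of Lemma \ref{touse} after a Littlewood-Paley decomposition of the symbols in the frequency variable $\zeta$. The key starting observation is that on the support of the cutoff $\chi(|\xi-\eta|/|\xi+\eta|) = \varphi_{\leq -20}(|\xi-\eta|/|\xi+\eta|)$ one has $|\xi-\eta|\leq 2^{-19}|\xi+\eta|$, so if $\xi\in\mathrm{supp}\,\varphi_k$ then $|\eta|\in[2^{k-2},2^{k+2}]$ and $|(\xi+\eta)/2|\approx 2^{k^+}$ as well.

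To prove (i), I would decompose $a(x,\zeta)=\sum_{k'\geq 0} a^{(k')}(x,\zeta)$ with $a^{(k')}(x,\zeta)=\varphi_{k'}(\zeta)a(x,\zeta)$ for $k'\geq 1$ and $a^{(0)}=\varphi_{\leq 0}(\zeta)a(x,\zeta)$. The support condition on $\chi$ forces only those terms with $k'\in[k-4,k+4]$ to contribute to $P_k T_a f$. For each such term, I view $T_{a^{(k')}}$ as a bilinear operator in $(a^{(k')},f)$ via the formula
\begin{equation*}
\mathcal{F}\{T_{a^{(k')}}f\}(\xi)=\frac{1}{2\pi}\int m^{(k')}(\xi-\eta,\eta)\widehat{f}(\eta)\,d\eta,\quad m^{(k')}(\mu,\eta):=\chi(|\mu|/|2\eta+\mu|)\widetilde{a^{(k')}}(\mu,\eta+\mu/2),
\end{equation*}
whose $S^\infty$-norm is controlled by $2^{lk^+}\|a\|_{\mathcal{L}^q_l}$ in view of the smoothness bounds $(1+|\zeta|^2)^{\beta/2}|\partial^\beta_\zeta\partial_x^\alpha a|$ encoded in $\mathcal{L}^q_l$. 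Lemma \ref{touse}(ii), applied with exponents $(q,r,p)$, then yields \eqref{LqBdTa} after summing the finitely many dyadic pieces.

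For (ii), the same $\zeta$-decomposition applied to both $a$ and $b$ reduces the problem to the case in which both symbols are frequency-localized at scale $2^k$. The composition $T_a T_b$ admits an explicit Fourier representation with two nested cutoffs, and I would Taylor-expand both symbols around the common midpoint $(\xi+\eta)/2$. The zeroth-order term reproduces $T_{ab}$; the first-order antisymmetric piece, after integration and symmetrization, collapses exactly to $(i/2)T_{\{a,b\}}$, which is the defining identity of the Weyl quantization. Each successive Taylor order carries a factor of $|\xi-\eta|\lesssim 2^{k^+}$ together with a $\zeta$-derivative of the symbol, which costs $2^{-k^+}$ by the definition of $\mathcal{L}^q_l$ (the weights $(1+|\zeta|^2)^{\beta/2}$ absorb exactly one such derivative). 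This yields the $2^{-k^+}$ improvement in \eqref{WeylB1} and the $2^{-2k^+}$ improvement in \eqref{WeylB2}, in both cases after re-invoking Lemma \ref{touse}.

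The main technical obstacle is the symbol-calculus bookkeeping for part (ii): verifying that the antisymmetric first-order Taylor coefficient is exactly $\{a,b\}=\partial_x a\,\partial_\zeta b-\partial_\zeta a\,\partial_x b$ (so that no lower-order $T_{ab}$-type correction survives), and tracking those terms in which the derivatives fall on the nested cutoffs $\chi$ rather than on the symbols themselves. Because $\chi$ is smooth and scale-invariant, such contributions come with the required power of $2^{-k^+}$ and are harmless. The complete calculation is standard and carried out in detail in \cite[Appendix A]{Deng-Ionescu-Pausader-Pusateri:global-solutions-gravity-capillary-water-waves-3d}.
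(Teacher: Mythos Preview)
The paper does not give its own proof of this lemma: it simply states that the results are standard and refers to \cite[Appendix~A]{Deng-Ionescu-Pausader-Pusateri:global-solutions-gravity-capillary-water-waves-3d} for the details. Your sketch follows exactly that standard route (dyadic localization in $\zeta$, reduction to the bilinear $S^\infty$ framework of Lemma~\ref{touse}, and Taylor expansion around the Weyl midpoint for the composition calculus), and you end by citing the same reference, so your approach matches the paper's.

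One small slip worth fixing: in your opening paragraph you write $|(\xi+\eta)/2|\approx 2^{k^+}$, but in fact $|(\xi+\eta)/2|\approx 2^{k}$ for all $k\in\mathbb{Z}$; what is true is that the weight $(1+|\zeta|^2)^{l/2}$ evaluated at $|\zeta|\approx 2^k$ is $\approx 2^{lk^+}$, which is the factor you actually need. Also, in your treatment of part~(i) the phrase ``whose $S^\infty$-norm is controlled by $2^{lk^+}\|a\|_{\mathcal{L}^q_l}$'' is imprecise when $q<\infty$: the $\mathcal{L}^q_l$-norm carries an $L^q_x$ rather than $L^\infty_x$ bound on $a$, so one must genuinely treat $a$ as an input function (placed in $L^q$) in the bilinear estimate, with the multiplier being only the $\chi$-cutoff and the $\zeta$-localization. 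This is exactly what the argument in the cited appendix does, and your invocation of Lemma~\ref{touse}(ii) with exponents $(q,r,p)$ indicates you have the right picture.
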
 

The point of these bounds is the gain of one derivative in \eqref{WeylB1} and the gain of 2 derivatives in the more precise formula \eqref{WeylB2}.

With $S=\alpha t\partial_t+x\partial_x$ (the scaling vector-field), notice that we have the identities
\begin{equation}\label{Alu2.1}
\begin{split}
&\widehat{Sg}(\xi,t)=(\al t\partial_t-\xi\partial_\xi-I)\widehat{g}(\xi,t),\qquad (\xi\partial_{\xi}+\eta\partial_{\eta})\Big[\chi\Big(\frac{\vert\xi-\eta\vert}{\vert\xi+\eta\vert}\Big)\Big]\equiv 0.
\end{split}
\end{equation}
We record below some simple properties of paradifferential operators.

\begin{lemma}\label{PropSym} 

(i) If $a\in \mathcal{L}_0^\infty$ is real-valued then $T_a$ is a bounded self-adjoint operator on $L^2$. Moreover,
\begin{equation}\label{Alu2}
\overline{T_af}=T_{a'}\overline{f},\quad\text{ where }\quad a'(y,\zeta):=\overline{a(y,-\zeta)}.
\end{equation} 

(ii) For suitable functions $f$ defined on $\mathbb{R}\times[0,T]$ and symbols $a$ defined on $\mathbb{R}\times\mathbb{R}\times[0,T]$ we have
\begin{equation}\label{Alu3}
S (T_af)=T_a(Sf)+T_{S_{x,\zeta}a}f\quad \text{ where }\quad (S_{x,\zeta}a)(x,\zeta,t)=(S_{x,t}a)(x,\zeta,t)-(\zeta\partial_{\zeta}a)(x,\zeta,t).
\end{equation} 
\end{lemma}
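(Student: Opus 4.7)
The plan is to treat the two parts separately, both via direct manipulation on the Fourier side using the kernel representation
\begin{equation*}
\widehat{T_a f}(\xi) \;=\; \frac{1}{2\pi}\int_{\mathbb{R}} K(\xi,\eta)\,\widehat{f}(\eta)\,d\eta, \qquad K(\xi,\eta):=\chi\!\Big(\frac{|\xi-\eta|}{|\xi+\eta|}\Big)\widetilde{a}\!\Big(\xi-\eta,\frac{\xi+\eta}{2}\Big).
\end{equation*}

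For part (i), boundedness of $T_a$ on $L^2$ is a direct consequence of Lemma \ref{PropProd}(i) applied with $p=q=2$, $r=\infty$, $l=0$, followed by Littlewood-Paley square function summation (using that $K(\xi,\eta)$ is supported where $|\xi|\sim|\eta|$, which makes the sum almost orthogonal). For self-adjointness I would verify $K(\xi,\eta)=\overline{K(\eta,\xi)}$: the prefactor $\chi(|\xi-\eta|/|\xi+\eta|)$ is real and symmetric under $\xi\leftrightarrow\eta$, while the realness of $a$ gives $\overline{\widetilde{a}(\mu,\zeta)}=\widetilde{a}(-\mu,\zeta)$, so $\overline{\widetilde{a}(\eta-\xi,(\eta+\xi)/2)}=\widetilde{a}(\xi-\eta,(\xi+\eta)/2)$, closing the identity. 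For the conjugation formula \eqref{Alu2} I would start from $\mathcal{F}\{\overline{T_a f}\}(\xi)=\overline{\mathcal{F}\{T_a f\}(-\xi)}$, substitute $\eta\mapsto-\eta$ in the resulting integral, use $\mathcal{F}\{\bar f\}(\eta)=\overline{\widehat{f}(-\eta)}$, and observe that $\overline{\widetilde{a}(-(\xi-\eta),-(\xi+\eta)/2)}=\widetilde{a'}(\xi-\eta,(\xi+\eta)/2)$ with $a'(y,\zeta)=\overline{a(y,-\zeta)}$; the parity of $\chi$ matches up and the formula drops out.

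For part (ii), I would apply the Fourier side identity $\widehat{Sg}(\xi,t)=(\alpha t\partial_t-\xi\partial_\xi-I)\widehat{g}(\xi,t)$ from \eqref{Alu2.1} to $g=T_a f$, producing
\begin{equation*}
\mathcal{F}\{S(T_a f)\}(\xi)=\frac{1}{2\pi}\int(\alpha t\partial_t K)\widehat{f}\,d\eta+\frac{1}{2\pi}\int K\,(\alpha t\partial_t\widehat{f})\,d\eta-\frac{1}{2\pi}\int(\xi\partial_\xi K)\widehat{f}\,d\eta-\widehat{T_a f}(\xi).
\end{equation*}
The key trick is to write $\xi\partial_\xi K=(\xi\partial_\xi+\eta\partial_\eta)K-\eta\partial_\eta K$, use $(\xi\partial_\xi+\eta\partial_\eta)\chi(|\xi-\eta|/|\xi+\eta|)=0$ from \eqref{Alu2.1} so that $(\xi\partial_\xi+\eta\partial_\eta)K=\chi\cdot[(\xi-\eta)\partial_\mu+((\xi+\eta)/2)\partial_\zeta]\widetilde{a}$, and integrate by parts in $\eta$ on the $-\eta\partial_\eta K$ term to transfer the derivative onto $\widehat{f}$. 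Combined with the $\alpha t\partial_t\widehat{f}$ term this generates exactly $T_a(Sf)$ (after noting $\mathcal{F}\{Sf\}=(\alpha t\partial_t-\xi\partial_\xi-I)\widehat{f}$). The remaining terms act only on the symbol: the $\alpha t\partial_t$ part contributes $T_{\alpha t\partial_t a}f$; the $(\xi-\eta)\partial_\mu\widetilde{a}$ piece, using $\mu\partial_\mu\widetilde{a}=-\mathcal{F}_x\{x\partial_x a\}-\widetilde{a}$, contributes $T_{x\partial_x a}f$ (the leftover $-I$ exactly cancels the boundary piece from the integration by parts together with the explicit $-I$ in the $S$ formula); and the $((\xi+\eta)/2)\partial_\zeta\widetilde{a}$ piece contributes $-T_{\zeta\partial_\zeta a}f$. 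Assembling these gives the symbol $(S_{x,t}a)-\zeta\partial_\zeta a=S_{x,\zeta}a$.

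The main obstacle is the bookkeeping in part (ii): tracking how the $-I$ factor in $\widehat{Sg}=(\alpha t\partial_t-\xi\partial_\xi-I)\widehat{g}$ combines with the $-\widetilde{a}$ coming from the commutator $\mu\partial_\mu\widetilde{a}=-\widetilde{x\partial_x a}-\widetilde{a}$ and with the boundary/Leibniz terms from the $\eta$-integration by parts. The symmetric choice of Weyl quantization (the arithmetic mean $(\xi+\eta)/2$ in the $\zeta$-slot) is what makes $(\xi\partial_\xi+\eta\partial_\eta)$ act cleanly on $\widetilde{a}$ as $\mu\partial_\mu+\zeta\partial_\zeta$, so all cancellations happen in closed form and no uncontrolled remainder appears.
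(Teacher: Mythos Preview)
Your proposal is correct and follows exactly the approach the paper indicates: the paper's own proof is just the one-line remark that ``these properties follow easily from definitions; for \eqref{Alu3} one uses also the identities \eqref{Alu2.1} and integration by parts in $\eta$,'' and your argument is precisely a detailed execution of that sketch. In particular your bookkeeping of the $-I$ terms is right: one $-\widehat{T_af}$ arises from the $-I$ in $\widehat{Sg}$, another from the $\eta$-integration by parts, one is absorbed into $T_a(Sf)$, and the remaining one cancels against the $-\widetilde{a}$ produced by $\mu\partial_\mu\widetilde{a}=-\widetilde{a}-\widetilde{x\partial_x a}$.
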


These properties follow easily from definitions; for \eqref{Alu3} one uses also the identities \eqref{Alu2.1} and integration by parts in $\eta$.

\subsection{The $Z$-norm and the main bootstrap proposition} For any function $f \in L^2(\R)$ let
\begin{equation}
\label{Znorm}
{\| f \|}_Z := {\big\| \big(|\xi|^{1/2+\beta} + |\xi|^{N_2\alpha+1} \big) \widehat{f}(\xi) \big\|}_{L^\infty_\xi} ,
\end{equation}
where $N_2:=8$ and $\beta:=(2-\alpha)/10$. Our main Theorem \ref{MainThm} follows, by local existence theory and a continuity argument, from the following main proposition:

\begin{proposition}[Main bootstrap]\label{MainProp}
Assume that $\alpha\in(1,2)$,
\begin{equation}\label{zxc1}
\begin{split}
& N_0:=20,\qquad N_1:=4,\qquad N_2:=8,\qquad \beta:=(2-\alpha)/10,\\
& p_0:= 10^{-6}\beta,\qquad 0<\varep_0\leq\varep_1\leq\varep_0^{2/3}\ll \beta.
\end{split}
\end{equation}
Assume $T\geq 1$ and $h\in C\big([0,T]:H^{N_0\alpha}\big)$ is a real-valued solution of the system \eqref{zxc2}. Let $v$ denote the linear profile of the solution,
\begin{equation}\label{prof}
v(t) := e^{-it\Lambda}h(t).
\end{equation}
Recall that $S=\alpha t\partial_t+x\partial_x$. Assume that, for any $t\in[0,T]$,
\begin{equation}\label{zxc4}
\langle t\rangle^{-p_0}\big[\|h(t)\|_{H^{N_0 \al }}+\|Sh(t)\|_{H^{N_1 \al}}+\|x\partial_xv(t)\|_{L^2}\big]+\|v(t)\|_{Z}\leq \e_1,
\end{equation}
where $\langle t\rangle=1+t$. Assume also that the initial data $h_0=h(0)$ satisfy the stronger bounds
\begin{equation}\label{zxc5}
\|h_0\|_{H^{N_0 \al}}+\|x\partial_xh_0\|_{H^{N_1 \al}}\leq\e_0.
\end{equation}
Then the solution satisfies the improved bounds, for any $t\in[0,T]$,
\begin{equation}\label{zxc6}
\langle t\rangle^{-p_0}\big[\|h(t)\|_{H^{N_0 \al }}+\|Sh(t)\|_{H^{N_1 \al}}+\|x\partial_xv(t)\|_{L^2}\big]+\|v(t)\|_{Z}\lesssim \e_0.
\end{equation}
\end{proposition}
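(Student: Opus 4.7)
The plan is to close the bootstrap by improving, separately, (i) the three energy-type quantities $\|h(t)\|_{H^{N_0\al}}$, $\|Sh(t)\|_{H^{N_1\al}}$, and $\|x\partial_xv(t)\|_{L^2}$ (each allowed to grow like $\langle t\rangle^{p_0}$), and (ii) the uniformly bounded quantity $\|v(t)\|_{Z}$. Before doing either, I would first extract from the bootstrap hypothesis \eqref{zxc4} the sharp pointwise decay of the solution: writing $h(t) = e^{it\Lambda} v(t)$ and applying Lemma \ref{dispersive} with $\|\widehat{v}\|_{L^\infty}$ controlled by $\|v\|_Z$ and $\|\partial_\xi \widehat{v}\|_{L^2}$ controlled by $\|x\partial_x v\|_{L^2}$, optimizing over the two terms on the right of \eqref{disperse} gives $(2^{k/2}+2^{N_2\al k})\|P_k h(t)\|_{L^\infty} \lesssim \e_1 \langle t\rangle^{-1/2}$ for $t\geq 1$, with very low or very high frequencies handled by Bernstein and the $H^{N_0\al}$-control. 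This decay is the smallness-in-time input for the energy step.

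For (i), I would follow the paradifferential reduction outlined in the introduction. Expanding the nonlinearity via \eqref{Nonl2}--\eqref{non4.1} and paralinearizing each $\mathcal{N}_n$ with Lemma \ref{PropProd} and the $S^\infty$-type bounds on $m_n$ announced after \eqref{non4.1}, I would arrive at $\partial_t h = i\Lambda h + iT_\Sigma h + E$ with $\Sigma = \Sigma^\al + \Sigma^1 + \Sigma^{\al-1}$ quadratic in $h$, the first two symbols real, the last purely imaginary. The good variable $h^\ast := T_b h$ is produced by solving the algebraic identity on a symbol $b$ of order $0$ whose commutator with $\Lambda$ (using \eqref{WeylB1}--\eqref{WeylB2}) cancels $T_{\Sigma^{\al-1}}$, leaving \eqref{explain3} with self-adjoint leading operator modulo a cubic error $E'$. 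A standard energy estimate yields
\begin{equation*}
\frac{d}{dt}\|h^\ast\|_{H^{N_0\al}}^2 \lesssim \|\Sigma^\al\|_{\mathcal{L}^\infty_\al}\,\|h^\ast\|_{H^{N_0\al}}^2 + \|E'\|_{H^{N_0\al}} \|h^\ast\|_{H^{N_0\al}},
\end{equation*}
and since the decay bound forces $\|\Sigma^\al\|_{\mathcal{L}^\infty_\al}+\|\Sigma^1\|_{\mathcal{L}^\infty_1} \lesssim \e_1^2\langle t\rangle^{-1}$, Grönwall produces $\|h(t)\|_{H^{N_0\al}}\lesssim \e_0\langle t\rangle^{C\e_1^2}$; since $C\e_1^2\ll p_0$ this recovers the bootstrap with $\e_0$. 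The weighted estimate on $Sh$ is obtained by commuting $S$ with the paradifferential equation using \eqref{Alu3} and \eqref{mk6.9}: because each symbol $m_n$ is homogeneous of degree $2n+\al$, the action of $S$ generates only a bounded multiple of the original symbol plus terms of the same structure, so the Grönwall argument is reproduced. Finally the algebraic identity $x\partial_x v = e^{-it\Lambda}(Sh) - \al t\, e^{-it\Lambda}(\partial_t h)$, together with the equation \eqref{zxc2}, converts the bound on $Sh$ and $h$ into a bound on $\|x\partial_x v\|_{L^2}$.

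For (ii), I would use the $Z$-norm / modified-scattering method. Writing $\partial_t \widehat{v}(\xi,t) = e^{-it\Lambda(\xi)}\widehat{\mathcal{N}}(\xi,t)$ and isolating the cubic contribution from \eqref{nonl4},
\begin{equation*}
\partial_t\widehat{v}(\xi,t) = \frac{i}{3}\int_{\R^2} e^{it\Phi(\xi,\eta_1,\eta_2)}\, m_1(\eta_1,\eta_2,\xi-\eta_1-\eta_2)\,\widehat{v}(\eta_1)\widehat{v}(\eta_2)\widehat{v}(\xi-\eta_1-\eta_2)\,d\eta_1 d\eta_2 + \mathcal{R},
\end{equation*}
with $\Phi(\xi,\eta_1,\eta_2) = -\Lambda(\xi) + \Lambda(\eta_1) + \Lambda(\eta_2) + \Lambda(\xi-\eta_1-\eta_2)$ and $\mathcal{R}$ gathering the $n\geq 2$ contributions. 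A space-time-resonance analysis shows that on the support of the integrand the only points where $\Phi = 0$ and $\nabla_{\eta_1,\eta_2}\Phi = 0$ simultaneously are the diagonal $\eta_1=\eta_2=\xi$ and its permutations, where $m_1$ reduces to an explicit real multiple of $|\xi|^\al$. I would then define the modified profile $\widehat{v^\ast}(\xi,t) := \widehat{v}(\xi,t) e^{iL(\xi,t)}$ with $L$ as in \eqref{nf50}, choosing $\widetilde{c}(\xi)$ precisely so that the diagonal contribution to $\partial_t(e^{iL}\widehat{v})$ vanishes. The remaining pieces are controlled in $L^\infty_\xi$ by the standard dichotomy: in the non-space-resonant region $|\nabla_\eta\Phi|\gtrsim \text{(size)}$, an integration by parts in $\eta$ using Lemma \ref{touse} gives a gain, and in the non-time-resonant region $|\Phi|\gtrsim \text{(size)}$, an integration by parts in $s$ costs a factor $1/\Phi$ but gains a factor $\partial_s\widehat{v}$, which is cubic and hence small. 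In every dyadic regime the resulting bound decays like $\langle s\rangle^{-1-\delta}$ for some $\delta > 0$ after using the pointwise decay of $h$, the $Z$-norm on $\widehat{v}$, and the weighted bound on $x\partial_x v$; integration in $s$ yields $\|v^\ast(t)\|_Z \lesssim \e_0$, hence $\|v(t)\|_Z \lesssim \e_0$ since $|e^{iL}| = 1$. The higher-order terms in $\mathcal{R}$ are strictly easier because the decay estimate \eqref{disperseEa} provides integrable-in-time bounds already at the quintic level.

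The main obstacle is step (ii), specifically the precise classification of the space-time resonances of $\Phi$ and the extraction of the correction $\widetilde{c}$. The phase $\Phi$ degenerates near the collinear set and near $\xi = 0$ where $\Lambda$ is only $C^{1,\al-1}$, and these degeneracies dictate both the exponent $\beta = (2-\al)/10$ in the low-frequency weight of the $Z$-norm and how much weighted regularity on $v$ is needed to push the non-resonant contributions below the $\langle s\rangle^{-1}$ threshold. The interplay between the modest loss $\langle s\rangle^{p_0}$ in $\|x\partial_x v\|_{L^2}$ and the frequency-localized bounds on integrals of $e^{it\Phi}$-type, requiring careful dyadic decomposition in each of the five free frequency parameters and repeated use of Lemma \ref{touse}, is the technical heart of the proof.
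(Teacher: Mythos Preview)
Your overall strategy matches the paper's: first derive sharp decay from the bootstrap (paper's Lemma~\ref{Poi2}), then (i) energy estimates via paradifferential reduction and a good variable, and (ii) $Z$-norm control via modified scattering. The identification of the space-time resonances, the correction $L(\xi,t)$, and the dichotomy ``integrate by parts in $\eta$ vs.\ in $s$'' all agree with Section~\ref{Poi1}.

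There is, however, one genuine gap in your energy step. The displayed inequality
\[
\frac{d}{dt}\|h^\ast\|_{H^{N_0\al}}^2 \lesssim \|\Sigma^\al\|_{\mathcal{L}^\infty_\al}\,\|h^\ast\|_{H^{N_0\al}}^2 + \|E'\|_{H^{N_0\al}} \|h^\ast\|_{H^{N_0\al}}
\]
does not follow from self-adjointness of $T_{\Sigma^\al+\Sigma^1}$ on $L^2$ alone. Writing $g=\langle D\rangle^{N_0\al}h^\ast$, the term $2\Re\langle iT_{\Sigma^\al+\Sigma^1}\,g,g\rangle$ vanishes, but you still have to control the commutator $[\langle D\rangle^{N_0\al},\,T_{\Sigma^\al}]$. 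By \eqref{WeylB2} this commutator is $iT_{\{\langle\zeta\rangle^{N_0\al},\Sigma^\al\}}$ modulo lower order, a \emph{real} symbol of order $N_0\al+\al-1$; the resulting term $\langle T_{\{\cdots\}}h^\ast,g\rangle$ is genuinely of size $\|\Sigma^\al\|_{\mathcal{L}^\infty_\al}\|h^\ast\|_{H^{N_0\al+(\al-1)/2}}^2$, which loses $\al-1>0$ derivatives and does not close in $H^{N_0\al}$. The paper avoids this by taking $\langle T_B^m h^\ast, T_B^m h^\ast\rangle$ with $B=\Lambda+\Sigma^\al+\Sigma^1$ as the energy (Lemma~\ref{hener}): then $T_B^m$ commutes exactly with $iT_B$, and the only commutator that appears is $[\partial_t,T_B^m]$, which involves $\partial_t\Sigma$ and is of order $m\al$ with no loss. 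At the end one recovers the $H^{N_0\al}$-norm from $\sum_m\|T_B^m h^\ast\|_{L^2}$ by ellipticity. You should replace your Gr\"onwall step by this modified energy.

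A minor correction: your identity for $x\partial_x v$ should read $x\partial_x v = e^{-it\Lambda}Sh - \al t\,\partial_t v$, with $\partial_t v = e^{-it\Lambda}\mathcal{N}$ (not $e^{-it\Lambda}\partial_t h$); the needed bound is then $\|\mathcal{N}(t)\|_{L^2}\lesssim \e_0\langle t\rangle^{-1+p_0}$, which follows from the paradifferential decomposition you already set up.
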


We notice that the assumption \eqref{zxc5} implies the desired bounds \eqref{zxc6} at time $t=0$, using also Lemma \ref{interpolation}. The rest of the paper is concerned with proving the bounds \eqref{zxc5} at all times $t\in[0,T]$. This is done in two steps, in Propositions \ref{proEEZ} and \ref{proZ}.

\section{Energy estimates}\label{Ene}

In this section we prove the following proposition.

\begin{proposition}\label{proEEZ}
With the assumptions in Proposition \ref{MainProp}, we have, for any $t\in[0,T]$,
\begin{equation}\label{Ens2}
\|h(t)\|_{H^{N_0 \al}}+\|Sh(t)\|_{H^{N_1 \al}}+\|x\partial_xv(t)\|_{L^2}\lesssim \e_0\langle t\rangle^{p_0}.
\end{equation}
\end{proposition}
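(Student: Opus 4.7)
The plan is to execute the paradifferential strategy outlined in subsection~\ref{MainIdeas}, treating the Sobolev and weighted norms simultaneously, and then extract the bound on $\|x\partial_x v\|_{L^2}$ as a corollary. A preliminary input is a pointwise decay estimate of the form $\|P_k h(t)\|_{L^\infty}\lesssim \varepsilon_1\langle t\rangle^{-1/2+Cp_0}\min(2^{-k/2},2^{-N_2\alpha k})$, obtained from the $Z$-norm bound in \eqref{zxc4} by applying Lemma~\ref{dispersive} to $P_k h=e^{it\Lambda}P_k v$: the $\|\widehat v\|_{L^\infty}$ term in \eqref{disperse} is controlled by $\|v\|_Z$ (using the two competing weights in the definition \eqref{Znorm}), while the $\|\partial\widehat v\|_{L^2}$ term is controlled by $\|x\partial_x v\|_{L^2}\lesssim \varepsilon_1\langle t\rangle^{p_0}$.

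Starting from the expansion \eqref{Nonl2}--\eqref{non4.1} for $\mathcal{N}$, I split each $\mathcal{N}_n$ into a piece where the output frequency is comparable to the largest input frequency and a balanced piece. Using Lemma~\ref{touse} and the $S^\infty$-bounds for the multipliers $m_n$ (forthcoming Lemma~\ref{mbounds}), the unbalanced piece recasts the evolution as $\partial_t h=i\Lambda h+iT_\Sigma h+E$, with $\Sigma=\Sigma^\alpha+\Sigma^1+\Sigma^{\alpha-1}$, where $\Sigma^\alpha,\Sigma^1$ are real symbols of orders $\alpha$ and $1$ respectively, $\Sigma^{\alpha-1}$ is purely imaginary of order $\alpha-1$, each quadratic in $h$ and bounded by $O(\|h\|_{W^{1,\infty}}^2)$ in the appropriate $\mathcal{L}^\infty_\ast$ classes; the remainder $E$ is cubic with
\begin{equation*}
\|E(t)\|_{H^{N_0\alpha}}\lesssim \|h(t)\|_{W^{N_2\alpha,\infty}}^2\|h(t)\|_{H^{N_0\alpha}},
\end{equation*}
with no derivative loss on the highest Sobolev factor. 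I then choose a real symbol $b(x,\zeta,t)\in\mathcal{L}^\infty_0$ of size $O(\|h\|_{W^{1,\infty}}^2)$, algebraically determined by $\Sigma^{\alpha-1}$ via the Poisson-bracket identity \eqref{WeylB2}, so that the good variable $h^\ast:=(I+T_b)h$ obeys
\begin{equation*}
\partial_t h^\ast=i\Lambda h^\ast+iT_{\Sigma^\alpha+\Sigma^1}h^\ast+E',
\end{equation*}
with $E'$ satisfying the same cubic bound as $E$.

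Since $T_{\Sigma^\alpha+\Sigma^1}$ is self-adjoint on $L^2$ by Lemma~\ref{PropSym}(i), the standard energy identity applied to $\langle\partial\rangle^{N_0\alpha}h^\ast$ combined with the pointwise decay above gives
\begin{equation*}
\frac{d}{dt}\|h^\ast(t)\|_{H^{N_0\alpha}}^2\lesssim \|h(t)\|_{W^{N_2\alpha,\infty}}^2\|h^\ast(t)\|_{H^{N_0\alpha}}^2\lesssim \varepsilon_1^2\langle t\rangle^{-1+Cp_0}\|h^\ast(t)\|_{H^{N_0\alpha}}^2,
\end{equation*}
so Gronwall yields $\|h^\ast(t)\|_{H^{N_0\alpha}}\lesssim \varepsilon_0\langle t\rangle^{C\varepsilon_1^2}\lesssim \varepsilon_0\langle t\rangle^{p_0}$ because $C\varepsilon_1^2\ll p_0$. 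Invertibility of $I+T_b$ on $H^{N_0\alpha}$ (ensured by \eqref{LqBdTa} and $\|b\|_{\mathcal{L}^\infty_0}\lesssim \varepsilon_1^2\ll 1$) transfers the bound to $h$.

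For the weighted norm, applying $S$ to \eqref{zxc2} and using the commutation identities \eqref{mk6.9}--\eqref{mk6.10} and \eqref{Alu3}, together with $[S,i\Lambda]=0$ (since $\xi\Lambda'(\xi)=\alpha\Lambda(\xi)$, whence the first identity in \eqref{Alu2.1} produces cancellations), gives $\partial_t(Sh)=i\Lambda Sh+iT_\Sigma Sh+\widetilde E$, where $\widetilde E$ collects the terms in which $S$ falls on a low-frequency input of $T_\Sigma$ or on a factor of $E$; because $N_1\alpha\le N_2\alpha$, the cubic bound $\|\widetilde E\|_{H^{N_1\alpha}}\lesssim \|h\|_{W^{N_2\alpha,\infty}}^2(\|Sh\|_{H^{N_1\alpha}}+\|h\|_{H^{N_0\alpha}})$ holds. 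Repeating the good-variable construction on $Sh$ and running the same Gronwall loop yields $\|Sh(t)\|_{H^{N_1\alpha}}\lesssim \varepsilon_0\langle t\rangle^{p_0}$. Finally, a direct use of \eqref{Alu2.1} and $\xi\Lambda'(\xi)=\alpha\Lambda(\xi)$ shows $\widehat{Sv}(\xi,t)=e^{-it\Lambda(\xi)}\widehat{Sh}(\xi,t)$, hence $\|Sv\|_{L^2}=\|Sh\|_{L^2}$; combined with $x\partial_x v=Sv-\alpha t\partial_t v$, $\partial_t v=e^{-it\Lambda}\mathcal N$, and $\|\mathcal N(t)\|_{L^2}\lesssim \varepsilon_1^3\langle t\rangle^{-1+p_0}$, this gives $\|x\partial_x v(t)\|_{L^2}\lesssim \varepsilon_0\langle t\rangle^{p_0}$, completing \eqref{Ens2}. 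The main obstacle is the opening paralinearization step: one must track the real/imaginary decomposition in \eqref{non4.1} carefully to identify the exact purely imaginary subprincipal piece $\Sigma^{\alpha-1}$ that is killed by $T_b$, while keeping the remainder $E$ fully cubic, derivative-balanced at the top Sobolev level, and compatible with the commutation rule for $S$.
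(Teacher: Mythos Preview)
Your overall strategy is the paper's: paralinearize to $\partial_t h=i\Lambda h+iT_\Sigma h+E$, pass to a good variable $h^\ast$ that removes the non-self-adjoint piece $\Sigma^{\alpha-1}$, run an energy estimate, commute $S$, and recover $\|x\partial_x v\|_{L^2}$ from $Sv=e^{-it\Lambda}Sh$ and $\partial_t v=e^{-it\Lambda}\mathcal N$. Two concrete steps, however, do not close as written.

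\textbf{Sharp pointwise decay.} You state $\|P_kh(t)\|_{L^\infty}\lesssim\varepsilon_1\langle t\rangle^{-1/2+Cp_0}$ and then conclude from Gronwall that $\|h^\ast(t)\|_{H^{N_0\alpha}}\lesssim\varepsilon_0\langle t\rangle^{C\varepsilon_1^2}$. These are inconsistent: with a rate $\langle t\rangle^{-1+Cp_0}$ in the energy inequality, Gronwall gives $\exp\big(C'\varepsilon_1^2\langle t\rangle^{Cp_0}/p_0\big)$, which grows super-polynomially. The polynomial conclusion requires the \emph{sharp} rate $\langle t\rangle^{-1/2}$. This is achievable from your inputs (the $t^{-3/4}$ term in \eqref{disperse} absorbs the $\langle t\rangle^{p_0}$ growth of $\|\partial\widehat v\|_{L^2}$, and the $t^{-1/2}$ term is controlled by $\|v\|_Z$ with no loss), but it has to be argued; this is the content of Lemma~\ref{Poi2}, and the resulting symbol bounds $\|\Sigma^\kappa\|_{\mathcal L^\infty_\kappa}\lesssim\varepsilon_1^2\langle t\rangle^{-1}$ in \eqref{Sig8} are what drive the energy estimate.

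\textbf{Top-order commutator.} Your energy identity treats $\langle\partial\rangle^{N_0\alpha}h^\ast$ and appeals only to self-adjointness of $T_{\Sigma^\alpha+\Sigma^1}$. But the commutator $[\langle\partial\rangle^{N_0\alpha},T_{\Sigma^\alpha}]$ is of order $N_0\alpha+\alpha-1$, so $\Re\langle i[\langle\partial\rangle^{N_0\alpha},T_{\Sigma^\alpha}]h^\ast,\langle\partial\rangle^{N_0\alpha}h^\ast\rangle$ needs $\|h^\ast\|_{H^{N_0\alpha+(\alpha-1)/2}}^2$, which you do not control. The paper sidesteps this by using the adapted energies $\|T_B^m h^\ast\|_{L^2}$ with $B=\Lambda+\Sigma^\alpha+\Sigma^1$ (Lemma~\ref{hener}); then the only commutator is $[\partial_t,T_B^m]$, which involves $\partial_t(\Sigma^\alpha+\Sigma^1)$ of order $\alpha$ but small $\mathcal L^\infty_\alpha$ norm, and no derivative is lost.

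\textbf{Minor.} The identity $[S,i\Lambda]=0$ is false; $\xi\Lambda'(\xi)=\alpha\Lambda(\xi)$ gives $[S,i\Lambda]=-\alpha i\Lambda$. What you need (and what the paper uses) is $[S,\partial_t-i\Lambda]=-\alpha(\partial_t-i\Lambda)$, which produces only an extra copy of the nonlinearity in the equation for $Sh^\ast$ and is harmless. Your computation $\widehat{Sv}=e^{-it\Lambda}\widehat{Sh}$ is correct, and the extraction of $\|x\partial_xv\|_{L^2}$ matches the paper's Lemma~\ref{proEEv}.
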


The rest of the section is concerned with the proof of this proposition. We start with two lemmas, concerning pointwise decay of the solution and bounds on the multipliers $m_n$. The main step is to rewrite the nonlinearity in paradifferential form, in Lemma \ref{MainhDec}. Then we construct a suitable renormalization of the variable $h$ and prove the desired energy bounds.

\subsection{Two lemmas} We start by proving sharp pointwise decay bounds on $h(t)$. 

\begin{lemma}\label{Poi2}
For any $t\in[0,T]$ and $k\in\mathbb{Z}$ we have
\begin{align}\label{prodecay0}
\big(2^{k/2-\beta k} + 2^{N_2\al k} \big) {\|P_k h(t)\|}_{L^\infty} \lesssim \e_1\langle t \rangle^{-1/2}.
\end{align}
\end{lemma}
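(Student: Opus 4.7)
The plan is to apply the linear dispersive estimate from Lemma \ref{dispersive} to the profile $v=e^{-it\Lambda}h$, combining it with Bernstein-type trivial bounds coming from the $Z$-norm and the weighted norms in \eqref{zxc4}. I split the argument into $t\leq 1$, where the target decay $\langle t\rangle^{-1/2}$ is of order $1$, and $t\geq 1$, where genuine dispersion must be extracted.

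For $t\leq 1$, Bernstein gives $\|P_k h\|_{L^\infty}\lesssim 2^{k/2}\|P_k v\|_{L^2}$, and integrating the pointwise bound $|\widehat{v}(\xi)|\leq\min(|\xi|^{-1/2-\beta},|\xi|^{-N_2\alpha-1})\|v\|_Z$ over $|\xi|\sim 2^k$ produces $\|P_k v\|_{L^2}\lesssim 2^{-\beta k}\e_1$ for $k\leq 0$ and $\|P_k v\|_{L^2}\lesssim 2^{-(N_2\alpha+1/2)k}\e_1$ for $k\geq 0$. Combining, $(2^{k/2-\beta k}+2^{N_2\alpha k})\|P_k h\|_{L^\infty}\lesssim \e_1$ uniformly in $k$, which is $\lesssim \e_1\langle t\rangle^{-1/2}$ in this range.

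For $t\geq 1$ I apply Lemma \ref{dispersive} with $f=v$, obtaining
\begin{equation*}
\|P_k h\|_{L^\infty}\lesssim t^{-1/2}2^{k(1-\alpha/2)}B_k + t^{-3/4}2^{-k(3\alpha/4-1/2)}A_k,
\end{equation*}
with $B_k:=\sup_{|\xi|\sim 2^k}|\widehat{v}(\xi)|$ and $A_k:=2^k\|\partial_\xi\widehat{v}\|_{L^2(|\xi|\sim 2^k)}+\|\widehat{v}\|_{L^2(|\xi|\sim 2^k)}$. The $Z$-norm gives $B_k\leq\min(2^{-(1/2+\beta)k},2^{-(N_2\alpha+1)k})\e_1$; multiplying by $2^{k/2-\beta k}+2^{N_2\alpha k}$ and exploiting the strict inequality $1-\alpha/2-2\beta=3(2-\alpha)/10>0$ (for $k\leq 0$) together with a surplus factor $2^{-k\alpha/2}$ (for $k\geq 0$) shows that the first-term contribution is $\lesssim \e_1 t^{-1/2}$ uniformly in $k$. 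For $A_k$ the key input is the Fourier identity $\mathcal{F}(x\partial_x v)(\xi)=-\widehat{v}(\xi)-\xi\partial_\xi\widehat{v}(\xi)$, obtained from $\mathcal{F}(xg)=i\partial_\xi\widehat{g}$, which rewrites $\partial_\xi\widehat{v}=-(\widehat{v}+\mathcal{F}(x\partial_x v))/\xi$ on $|\xi|\sim 2^k$ and produces
\begin{equation*}
A_k\lesssim\|P_{[k-2,k+2]}v\|_{L^2}+\|P_{[k-2,k+2]}(x\partial_x v)\|_{L^2}\lesssim \e_1\langle t\rangle^{p_0}
\end{equation*}
directly from \eqref{zxc4}.

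The main technical obstacle is closing the second-term contribution uniformly in $k$: after multiplying by $2^{k/2-\beta k}+2^{N_2\alpha k}$, it is bounded by $\e_1 t^{-3/4+p_0}$ times $2^{k(1-3\alpha/4-\beta)}$ for $k\leq 0$ (which is large when $\alpha>4/3$) or $2^{k(N_2\alpha-3\alpha/4+1/2)}$ for $k\geq 0$, so the dispersive estimate only closes on its own in the window $|k|\lesssim \log t$. I handle the extreme regimes by two trivial supplementary estimates: for $k$ sufficiently negative the Bernstein+$Z$-norm bound $2^{k/2-\beta k}\|P_k h\|_{L^\infty}\lesssim 2^{k(1-2\beta)}\e_1$ is already $\lesssim \e_1 t^{-1/2}$ once $2^{k(1-2\beta)}\lesssim t^{-1/2}$; for $k$ sufficiently positive the Bernstein+Sobolev bound $2^{N_2\alpha k}\|P_k h\|_{L^\infty}\lesssim 2^{k(1/2-(N_0-N_2)\alpha)}\e_1\langle t\rangle^{p_0}$ beats $\e_1 t^{-1/2}$ because the gap $N_0-N_2=12$ is very large compared with $p_0=10^{-6}\beta$. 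A direct computation, using crucially that $\alpha<2$, verifies that the dispersive window and the two trivial windows overlap to cover all $k\in\mathbb{Z}$; if an intermediate-$k$ deficit resists, I recover the missing $k$-decay in $\|P_k(x\partial_x v)\|_{L^2}$ by using $x\partial_x v=e^{-it\Lambda}Sh-\alpha t\,e^{-it\Lambda}\mathcal{N}$, together with the $Sh$-bound in \eqref{zxc4} and a standard Sobolev-type estimate for the cubic nonlinearity $\mathcal{N}$.
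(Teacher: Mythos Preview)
Your overall strategy---apply the dispersive Lemma~\ref{dispersive} to the profile $v$, control the first term via the $Z$-norm, the second via the weighted $L^2$ bound on $x\partial_x v$, and patch the extreme frequencies with trivial bounds---is exactly the paper's. The low-frequency side is correct: your Bernstein/$Z$-norm bound $2^{k/2-\beta k}\|P_kh\|_{L^\infty}\lesssim 2^{k(1-2\beta)}\e_1$ and your dispersive window do overlap, with margin $\tfrac{11(2-\alpha)}{20}>0$, precisely because $\alpha<2$.

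The high-frequency side, however, does not close as written. For $k\geq 0$ the second dispersive term, after multiplying by $2^{N_2\alpha k}$, is controlled only for $k\leq K_3:=\frac{(1/4-p_0)}{N_2\alpha-3\alpha/4+1/2}\log_2 t$, while your Bernstein/$H^{N_0\alpha}$ bound takes over only for $k\geq K_4:=\frac{(1/2+p_0)}{(N_0-N_2)\alpha-1/2}\log_2 t$. The overlap condition $K_3\geq K_4$ reduces (up to $p_0$) to $12\alpha-\tfrac12\geq 29\alpha/2+1$, i.e.\ $-5\alpha/2\geq 3/2$, which is false; so there is always a gap $(K_3,K_4)$. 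Your fallback---extracting $k$-decay in $A_k$ via $x\partial_x v=e^{-it\Lambda}Sh-\alpha t\,e^{-it\Lambda}\mathcal{N}$---does give $\|P_kSh\|_{L^2}\lesssim 2^{-N_1\alpha k}\e_1 t^{p_0}$, but the term $t\|P_k\mathcal{N}\|_{L^2}$ requires a time-decay bound on $\mathcal{N}$ of order $t^{-1}$, and any such bound (via $L^2\times L^\infty\times L^\infty$ estimates on the cubic form) already uses the pointwise decay \eqref{prodecay0} you are proving. This is circular; there is no ``standard Sobolev-type estimate'' available at this stage that supplies $t^{-1}$ decay without the lemma itself.

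The paper resolves the high-frequency regime by a different route: it combines the $L^1\to L^\infty$ dispersive bound \eqref{disperseEa} with the interpolation Lemma~\ref{interpolation}, which yields
\[
\|P'_kv\|_{L^1}\lesssim 2^{-k/2}\big(\|P'_kv\|_{L^2}\cdot A_k\big)^{1/2}\lesssim 2^{-k/2-N_0\alpha k^+/2}\,\e_1 t^{p_0}.
\]
The geometric mean picks up half of the $H^{N_0\alpha}$ decay without any nonlinear input, and the resulting $2^{-N_0\alpha k/2}$ factor (with $N_0/2=10>N_2=8$) closes the estimate for all $2^k\geq t^{100p_0}$.
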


\begin{proof}
We may assume $t\geq 1$ and let $P'_k:=P_{[k-2,k+2]}$. We use Lemma \ref{dispersive} with $f=P_kv(t) = e^{-it\Lambda}P_kh(t)$ to show that
\begin{align}
\label{prodecay2}
{\| P_k h(t) \|}_{L^\infty} \lesssim t^{-1/2}2^{k(1-\alpha/2)} {\|\widehat{P_k^\prime v}(t)\|}_{L^\infty}
   + t^{-3/4} 2^{-k(3\alpha/4-1/2)} \big[ 2^k {\|\partial \widehat{P_k^\prime v}(t)\|}_{L^2} + {\|\widehat{P_k^\prime v}(t)\|}_{L^2} \big]
\end{align}
and
\begin{equation}
\label{prodecay3}
{\| P_k h(t) \|}_{L^\infty} \lesssim t^{-1/2} 2^{k(1-\alpha/2)} {\| P_k^\prime v(t)\|}_{L^1}.
\end{equation}
It follows from \eqref{zxc4} that 
\begin{equation}\label{prodecay4}
\begin{split}
2^k\|\partial \widehat{P'_k v}(t)\|_{L^2} + \|\widehat{P'_k v}(t)\|_{H^{N_0\al}}&\lesssim \e_1 t^{p_0},\\
(2^{k/2+\beta k}+2^{(N_2\alpha+1)k})\|\widehat{P'_k v}(t)\|_{L^\infty}&\lesssim \e_1.
\end{split}
\end{equation}
Therefore
\begin{align*}
{\| P_k h(t) \|}_{L^\infty} \lesssim t^{-1/2}2^{k(1-\alpha/2)} \frac{\e_1}{2^{k/2+\beta k} + 2^{(N_2\alpha+1)k}}
   + t^{-3/4} 2^{-k(3\alpha/4-1/2)} \e_1 t^{p_0}.
\end{align*}
Since $1-\alpha/2=5\beta$, see \eqref{zxc1}, this is enough to show that
\begin{align}
\label{prodecay20}
\sup_{k\in \mathbb{Z}, \, \langle t \rangle^{-1/4} \leq 2^k \leq \langle t \rangle^{100p_0}} \big(2^{k/2-\beta k} + 2^{\alpha N_2k} \big) {\|P_k h(t)\|}_{L^\infty}
\lesssim \e_1 \langle t \rangle^{-1/2}.
\end{align}

Combining \eqref{prodecay3}, with Lemma \ref{interpolation}, and \eqref{prodecay4} we see that
\begin{equation*}
{\| P_k h(t) \|}_{L^\infty} \lesssim t^{-1/2} 2^{k(1-\alpha/2)} \e_1 \langle t\rangle^{p_0} 2^{-k/2}2^{-\al N_0k^+/2}.
\end{equation*}
It follows that
\begin{align}
\label{prodecay30}
\sup_{k\in \mathbb{Z}, \, 2^k \leq \langle t \rangle^{-1/4}} 2^{k/2-\beta k} {\|P_k h(t)\|}_{L^\infty}
  + \sup_{k\in \mathbb{Z}, \, 2^k \geq \langle t \rangle^{100p_0}} 2^{\al N_2k} {\|P_k h(t)\|}_{L^\infty}
  \lesssim \e_1 \langle t\rangle^{-1/2} .
\end{align}
The desired bounds \eqref{prodecay0} follow from \eqref{prodecay20} and \eqref{prodecay30}.
\end{proof}

We record now some properties of several multipliers that are important in the analysis. 

\begin{lemma}\label{mbounds}

(i) The multipliers $m_n$ defined in \eqref{non4.1} are real-valued, odd, and homogeneous of degree $2n+\al$, 
\begin{equation}\label{non4.2}
\begin{split}
&m_n(-\la_1,\ldots,-\la_{2n+1})=-m_n(\la_1,\ldots,\la_{2n+1}),\\
&m_n(\rho\la_1,\ldots,\rho\la_{2n+1})=\rho^{2n+\al}m_n(\la_1,\ldots,\la_{2n+1}),\qquad\rho>0.
\end{split}
\end{equation}

(ii) For any $k_1,\ldots,k_{2n+1}\in\mathbb{Z}$ and $n\geq 1$ let
\begin{equation}\label{nonl6}
m_n^{k_1,\ldots,k_{2n+1}}(\mu_1,\ldots,\mu_{2n+1}):=m_n(\mu_1,\ldots,\mu_{2n+1})\cdot \varphi_{k_1}(\mu_1)\ldots\varphi_{k_{2n+1}}(\mu_{2n+1}).
\end{equation}
Let $k_{\mathrm{max}}:=\max(k_1,\ldots,k_{2n+1})$ and $k_{\mathrm{min}}:=\min(k_1,\ldots,k_{2n+1})$. Then
\begin{equation}\label{nonl7}
\|m_n^{k_1,\ldots,k_{2n+1}}\|_{S^\infty}\leq C^n2^{k_1+\ldots+k_{2n+1}}2^{(\al-1)k_{\mathrm{max}}}
\end{equation}
for some constant $C=C_\al\geq 1$. Moreover, if $n=1$ and $j\in\{1,2,3\}$ then
\begin{equation}\label{nonl7.5}
\|\partial_{\mu_j}m_1^{k_1,k_2,k_3}\|_{S^\infty}\lesssim 2^{-k_j}2^{k_{\mathrm{min}}}2^{(1+\alpha)k_{\mathrm{max}}}.
\end{equation}

(iii) Let
\begin{equation}\label{Sig5}
p_n(\la_1,\ldots,\la_{2n}):=\int_{[0,1]^{2n}}\Big|\sum_{i=1}^{2n}s_i\lambda_i\Big|^{\al-1}\,ds_1\ldots d_{s_{2n}}.
\end{equation}
The multipliers $p_n$ are real-valued and homogeneous of degree $\al-1$, and satisfy the bounds
\begin{equation}\label{nonl13}
\big\|\mathcal{F}^{-1}\big\{p_n(\la_1,\ldots,\la_{2n})\cdot \varphi_{k_1}(\la_1)\ldots\varphi_{k_{2n}}(\la_{2n})\big\}\big\|_{L^1}\leq C^n 2^{(\al-1)\max(k_1,\ldots,k_{2n})}.
\end{equation}
\end{lemma}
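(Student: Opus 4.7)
My plan is to handle (i) first, then (iii), then (ii), since (iii) supplies the key Fourier-side estimate for $|\cdot|^{\alpha-1}$ that will be reused in (ii). Part (i) follows from manipulations of \eqref{non4.1}: homogeneity of degree $2n+\alpha$ comes from $\lambda_i\to\rho\lambda_i$ combined with $y\to y/\rho$, contributing $\rho$, $\rho^{2n+1}$, $\rho^{\alpha-2}$ from $\sum\lambda_i$, $\prod(1-e^{-i\lambda_iy})/y$, and $dy/(|y|^{\alpha-1}\mathrm{sgn}(y))$ respectively. The substitution $y\to -y$ in the integrand shows
\[
\int_\mathbb{R}\prod_i\frac{1-e^{i\lambda_iy}}{y}\frac{dy}{|y|^{\alpha-1}\mathrm{sgn}(y)} = \int_\mathbb{R}\prod_i\frac{1-e^{-i\lambda_iy}}{y}\frac{dy}{|y|^{\alpha-1}\mathrm{sgn}(y)},
\]
and since $d_n$ is real, this gives both reality (via complex conjugation) and oddness (via $\lambda_i\to -\lambda_i$, where only the $\sum\lambda_i$ prefactor picks up a sign).

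For (iii), Minkowski in the $s$-integration reduces the $L^1$-bound \eqref{nonl13} to showing $\|\mathcal{F}^{-1}\{|L|^{\alpha-1}\prod\varphi_{k_i}\}\|_{L^1(\mathbb{R}^{2n})}\lesssim 2^{(\alpha-1)k_{\max}}$ uniformly in $s\in[0,1]^{2n}$, with $L(\lambda)=\sum s_i\lambda_i$. Since $|L|\lesssim 2^{k_{\max}}$ on the support of $\prod\varphi_{k_i}$, one can replace $|L|^{\alpha-1}$ by $g(L)$ with $g=|\cdot|^{\alpha-1}\varphi_{\leq k_{\max}+C}$, and by scaling $\|g\|_{S^\infty(\mathbb{R})}\lesssim 2^{(\alpha-1)k_{\max}}$. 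Writing $g(L(\lambda))=\int\widetilde g(\tau)e^{i\tau L(\lambda)}d\tau/(2\pi)$, the inverse Fourier transform in $\lambda$ of $e^{i\tau L(\lambda)}\prod\varphi_{k_i}$ equals $\prod\check\varphi_{k_i}(x_i+\tau s_i)$, whose $L^1$-norm is $\lesssim 1$ uniformly in $(\tau,s)$, completing the estimate.

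For the main bound in (ii), I compute $\mathcal{F}^{-1}_\lambda$ of the localized $m_n^{k_1,\ldots,k_{2n+1}}$ directly from \eqref{non4.1}. The key input is
\[
\mathcal{F}^{-1}_{\lambda_i}\!\left\{\frac{1-e^{-i\lambda_iy}}{y}\varphi_{k_i}(\lambda_i)\right\}(x_i) = \frac{\check\varphi_{k_i}(x_i)-\check\varphi_{k_i}(x_i-y)}{y},
\]
with $L^1(x_i)$-norm $\lesssim \min(|y|^{-1},2^{k_i})$; the $\partial_{x_i}$-derivative gains an extra factor $2^{k_i}$. The prefactor $(\sum\lambda_i)$ becomes $-i\sum\partial_{x_j}$, giving
\[
\|m_n^{k_1,\ldots,k_{2n+1}}\|_{S^\infty}\lesssim 2^{k_{\max}}\int_\mathbb{R}\prod_i\min(|y|^{-1},2^{k_i})\,|y|^{1-\alpha}\,dy.
\]
The $y$-integral is evaluated by splitting $\mathbb{R}$ at the sorted thresholds $2^{-k_{(1)}}\geq\ldots\geq 2^{-k_{(2n+1)}}$: on $|y|\in[2^{-k_{(\ell+1)}},2^{-k_{(\ell)}}]$ the integrand equals $\big(\prod_{i\leq\ell}2^{k_{(i)}}\big)|y|^{\ell-2n-\alpha}$, and since $\ell-2n-\alpha\neq -1$ for integer $\ell$ and $\alpha\in(1,2)$, each of the $O(n)$ pieces contributes $\lesssim \prod_i 2^{k_i}\cdot 2^{(\alpha-2)k_{\max}}$ with constants controlled by $1/\min(\alpha-1,2-\alpha)$, summing into $C^n$ to yield \eqref{nonl7}.

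For \eqref{nonl7.5}, Leibniz splits $\partial_{\mu_j}m_1^{k_1,k_2,k_3} = (\partial_{\mu_j}m_1)\prod\varphi_{k_i}+m_1\cdot\varphi'_{k_j}(\mu_j)\prod_{i\neq j}\varphi_{k_i}$. The second term uses $\|\varphi'_{k_j}\|_{S^\infty}\lesssim 2^{-k_j}$ combined with \eqref{nonl7}. For the first, differentiating under the $y$-integral in $m_1$ produces either a term with $(\sum\mu_i)$ dropped, or one in which $(1-e^{-i\mu_jy})/y$ is replaced by $ie^{-i\mu_jy}$; the latter inverse Fourier transforms to $\check\varphi_{k_j}(x_j-y)$ with $L^1$-norm $\lesssim 1$ in place of $\min(|y|^{-1},2^{k_j})$, which is precisely the $2^{-k_j}$ gain targeted. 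The main obstacle is the careful dyadic bookkeeping in (ii): checking that no exponent $\ell-2n-\alpha$ coincides with $-1$ (automatic from $\alpha\in(1,2)$ being non-integer) and closing the asymmetric target of \eqref{nonl7.5} by case analysis on the position of $k_j$ among $\{k_{\min},k_{\mathrm{med}},k_{\max}\}$.
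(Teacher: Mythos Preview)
Your proposal is correct and, for part (i) and the main bound \eqref{nonl7}, essentially coincides with the paper's argument: both compute $\mathcal{F}^{-1}$ of the localized symbol directly from the integral formula \eqref{non4.1}, use the bound $\|(\check\varphi_{k_i}(\cdot)-\check\varphi_{k_i}(\cdot-y))/y\|_{L^1}\lesssim\min(|y|^{-1},2^{k_i})$ (the paper's \eqref{nonl9}), and sum the resulting $y$-integral dyadically.

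Where you differ is in (iii) and in \eqref{nonl7.5}. For (iii), the paper invokes the identity $\gamma|w|^{\alpha-1}=\int(1-e^{-iwy})|y|^{-\alpha}\,dy$ and reduces to the same difference-quotient machinery as (ii). Your route---Minkowski in $s$, then the one-dimensional $S^\infty$ bound for $g=|\cdot|^{\alpha-1}\varphi_{\le k_{\max}+C}$ and the factorization $g(L(\lambda))=\int\widetilde g(\tau)\prod_i e^{i\tau s_i\lambda_i}\,d\tau$---is cleaner and avoids reintroducing a $y$-integral; it only needs the elementary fact $\|\mathcal{F}^{-1}(|\cdot|^{\alpha-1}\varphi_{\le 0})\|_{L^1}<\infty$, which holds because the Fourier transform decays like $|\xi|^{-\alpha}$ with $\alpha>1$. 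For \eqref{nonl7.5}, the paper stays on the physical side, using $\partial_{\mu_j}\leftrightarrow ix_j$ and the weighted estimate $\int|x||\widetilde\psi(2^kx)-\widetilde\psi(2^k(x-y))|\,dx\lesssim 2^{-k}|y|$. Your Leibniz-plus-differentiate-under-the-integral approach is equally valid; just note that when $(1-e^{-i\mu_jy})/y$ is replaced by $ie^{-i\mu_jy}$, the $L^1$ bound becomes $1$ rather than $\min(|y|^{-1},2^{k_j})$, so the ``gain'' is $\max(|y|,2^{-k_j})$, not uniformly $2^{-k_j}$---your promised case analysis on the position of $k_j$ among $k_{\min},k_{\mathrm{med}},k_{\max}$ is indeed needed to close, and it does. (Also, contrary to your opening sentence, your proof of (ii) does not actually use (iii); the two are independent in your write-up as well.)
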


\begin{proof} (i) This is a consequence of the definitions (alternatively, one can use the formula \eqref{nonl5} derived below). 

(ii) We write, using the formula \eqref{non4.1}, 
\begin{equation}\label{nonl8}
\begin{split}
\mathcal{F}^{-1}&(m_n^{k_1,\ldots,k_{2n+1}})(x_1,\ldots,x_{2n+1})=A_n2^{k_1+\ldots+k_{2n+1}}\sum_{i=1}^{2n+1}2^{k_i}\int_{\mathbb{R}}\frac{dy}{|y|^{\alpha-1}\sign(y)}\\
&\times\frac{\psi'_0(2^{k_i}x_i)-\psi'_0(2^{k_i}(x_i-y))}{y}\prod_{j\in[1,2n+1],\,j\neq i}\frac{\psi_0(2^{k_j}x_j)-\psi_0(2^{k_j}(x_j-y))}{y},
\end{split}
\end{equation}
where $A_n\leq C^n$ is a constant, $\psi_0$ is the inverse Fourier transform of $\varphi_0$, and $\psi'_0$ is its derivative. We notice that if $|y|\approx 2^p$ and $\widetilde{\psi}\in\{\psi_0,\psi'_0\}$ then
\begin{equation}\label{nonl9}
\int_{\mathbb{R}}|\widetilde{\psi}(2^{k}x)-\widetilde{\psi}(2^{k}(x-y))|\,dx\lesssim \min(2^{-k},2^p).
\end{equation}
Therefore, using also that $\al\in(1,2)$,
\begin{equation*}
\begin{split}
\big\|\mathcal{F}^{-1}(m_n^{k_1,\ldots,k_{2n+1}})\big\|_{L^1}&\lesssim_n 2^{k_1+\ldots+k_{2n+1}}2^{k_{\mathrm{max}}}\sum_{p\in\mathbb{Z}}2^{p(2-\alpha)}\min(1,2^{-k_1-p})\ldots\min(1,2^{-k_{2n+1}-p})\\
&\lesssim_n 2^{k_1+\ldots+k_{2n+1}}2^{(\al-1)k_{\mathrm{max}}},
\end{split}
\end{equation*}
where $X\lesssim_n Y$ (here and in the rest of the paper) means that there is a constant $C_\al\geq 1$ such that $X\leq C_\al^nY$. This gives the desired bounds \eqref{nonl7}.

To prove \eqref{nonl7.5} we notice that $\mu_j$ differentiation corresponds to multiplication by $x_j$. Then we notice that if $|y|\approx 2^p$ then
\begin{equation}\label{nonl11}
\int_{\mathbb{R}}|x||\widetilde{\psi}(2^{k}x)-\widetilde{\psi}(2^{k}(x-y))|\,dx\lesssim \min(2^{-k},2^p)(2^{-k}+2^p)\lesssim 2^{-k}2^p.
\end{equation} 
Therefore, estimating as before,
\begin{equation*}
\begin{split}
\big\|&\mathcal{F}^{-1}(\partial_{\mu_j}m_1^{k_1,k_2,k_3})\big\|_{L^1}\\
&\lesssim 2^{k_1+k_2+k_3}2^{k_{\mathrm{max}}}\sum_{p\in\mathbb{Z}}(2^{-k_j}+2^p)2^{p(2-\alpha)}\min(1,2^{-k_1-p})\min(1,2^{-k_2-p})\min(1,2^{-k_3-p})\\
&\lesssim 2^{-k_j}2^{k_{\mathrm{min}}}2^{(1+\alpha)k_{\mathrm{max}}}.
\end{split}
\end{equation*}

(iii) We start from the formula
\begin{equation*}
\gamma|w|^{\al-1}=\int_{\mathbb{R}}\frac{1-e^{-iwy}}{|y|^\al}\,dy,
\end{equation*}
which follows from \eqref{Poi7}. Therefore
\begin{equation*}
\begin{split}
\mathcal{F}^{-1}\big\{p_n(\la_1,\ldots,&\la_{2n})\cdot \varphi_{k_1}(\la_1)\ldots\varphi_{k_{2n}}(\la_{2n})\big\}(x_1,\ldots,x_{2n})\\
&=B_n\int_{[0,1]^{2n}\times\mathbb{R}}\frac{\prod_{i=1}^{2n}[2^{k_i}\psi_0(2^{k_i}x_i)]-\prod_{i=1}^{2n}[2^{k_i}\psi_0(2^{k_i}(x_i-ys_i))]}{|y|^\al}\,dy d\underline{s},
\end{split}
\end{equation*}
where $\underline{s}=(s_1,\ldots,s_{2n})$. Using \eqref{nonl9} it follows that
\begin{equation*}
\begin{split}
\big\|\mathcal{F}^{-1}\big\{p_n(\la_1,\ldots,\la_{2n})\cdot \varphi_{k_1}&(\la_1)\ldots\varphi_{k_{2n}}(\la_{2n})\big\}\big\|_{L^1}\lesssim_n\int_{[0,1]^{2n}\times\mathbb{R}}\frac{\sum_{i=1}^{2n}\min(1,2^{k_i}ys_i)}{|y|^\al}\,dy d\underline{s}\\
&\lesssim_n\int_{\mathbb{R}}\frac{\min(1,2^{\max(k_1,\ldots,k_{2n})}y)}{|y|^\al}\,dy\lesssim_n2^{(\al-1)\max(k_1,\ldots,k_{2n})},
\end{split}
\end{equation*}
as desired.
\end{proof}

\subsection{Paradifferential formulation} The main issue when proving energy estimates is to avoid the potential loss of derivative. We do this here using a paradifferential reduction. Our next lemma is the main step in the proof of Proposition \ref{proEEZ}.

\begin{lemma}\label{MainhDec}
We have
\begin{equation}\label{yui1}
\partial_t h=i\Lambda h+iT_{\Sigma}h+E,
\end{equation}
where

(i) The error term $E=E(h)\in C([0,T]:H^{N_0\al})$ satisfies the bounds
\begin{equation}\label{plo1}
\|E(t)\|_{H^{N_0\alpha}}+\|SE(t)\|_{H^{N_1\alpha}}\lesssim\e_1^2\langle t\rangle^{-1+p_0};
\end{equation}

(ii) The symbol $\Sigma=\Sigma(h)$ decomposes as
\begin{equation}\label{plo2}
\Sigma=\Sigma^\alpha+\Sigma^1+\Sigma^{\al-1},
\end{equation}
where $\Sigma^\al$, $\Sigma^1$, and $\Sigma^{\al-1}$ are symbols of order $\al$, $1$, and $\al-1$ given by
\begin{equation}\label{Sig1}
\begin{split}
\widetilde{\Sigma^\al}(\rho,\zeta):=\Lambda(\zeta)&\varphi_{\geq 30}(\zeta)\sum_{n\geq 1} \frac{d_n(-1)^n}{(2\pi)^{2n-1}}\int_{\mathbb{R}^{2n-1}}\Big(\prod_{i=1}^{2n-1}\widetilde{g_n}(\eta_i,\ze)\Big)\widetilde{g_n}\Big(\rho-\sum_{i=1}^{2n-1}\eta_i,\ze\Big)\,d\underline{\eta},
\end{split}
\end{equation}
\begin{equation}\label{Sig2}
\begin{split}
\widetilde{\Sigma^1}(\rho,\zeta):=-\gamma\zeta&\varphi_{\geq 30}(\zeta)\sum_{n\geq 1} \frac{d_n(-1)^n}{(2\pi)^{2n-1}}\int_{\mathbb{R}^{2n-1}}p_n\Big(\underline{\eta},\rho-\sum_{i=1}^{2n-1}\eta_i\Big)\\
&\times\Big(\prod_{i=1}^{2n-1}\widetilde{g_n}(\eta_i,\ze)\Big)\widetilde{g_n}\Big(\rho-\sum_{i=1}^{2n-1}\eta_i,\ze\Big)\,d\underline{\eta},
\end{split}
\end{equation}
\begin{equation}\label{Sig3}
\widetilde{\Sigma^{\al-1}}(\rho,\zeta):=\frac{\rho}{2\ze}\cdot\widetilde{\Sigma^\al}(\rho,\zeta),
\end{equation}
where $\underline{\eta}:=(\eta_1,\ldots,\eta_{2n-1})$, $p_n$ are defined as in \eqref{Sig5}, and
\begin{equation}\label{Sig4}
\widetilde{g_n}(\rho,\ze):=\rho\widehat{h}(\rho)\varphi_{\leq -4n-30}(\rho/\zeta).
\end{equation}

The symbols $\Sigma^\al$ and $\Sigma^1$ are real-valued and, see also definitions \eqref{nor1} and \eqref{Alu3}, 
\begin{equation}\label{Sig8}
\begin{split}
&\|\Sigma^\kappa\|_{\mathcal{L}^\infty_\kappa}\lesssim \e_1^2\langle t\rangle^{-1},\qquad\kappa\in\{\al,1,\al-1\},\\
&\|\Sigma^\kappa\|_{\mathcal{L}^2_\kappa}+\|S_{x,\zeta}\Sigma^\kappa\|_{\mathcal{L}^2_\kappa}\lesssim \e_1^2\langle t\rangle^{-1/2+p_0},\qquad\kappa\in\{\al,1,\al-1\}.
\end{split}
\end{equation} 
\end{lemma}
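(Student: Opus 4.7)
The plan is to paradifferentially decompose each $\mathcal{N}_n$ in \eqref{Nonl2}--\eqref{nonl4} by first evaluating the $y$-integral in \eqref{non4.1} in a form suited for the decomposition. Using $1 - e^{-iy\la_j} = iy\la_j\int_0^1 e^{-is_j y\la_j}\,ds_j$ for $j \leq 2n$ (but not $j=2n+1$), Fubini, and the identity $\int (1-e^{-iy\mu})|y|^{-\al}dy = \gamma|\mu|^{\al-1}$ from \eqref{Poi7}, one obtains the factorization
\begin{equation*}
m_n(\la_1,\ldots,\la_{2n+1}) = \frac{d_n(-1)^n\gamma}{(2\pi)^{2n}}\Big(\sum_{j=1}^{2n+1}\la_j\Big)\Big(\prod_{j=1}^{2n}\la_j\Big)\big[\mathcal{J}_n(\underline{\la}) - p_n(\la_1,\ldots,\la_{2n})\big],
\end{equation*}
with $\mathcal{J}_n(\underline{\la}) := \int_{[0,1]^{2n}}\big|\sum_{j=1}^{2n}s_j\la_j + \la_{2n+1}\big|^{\al-1}d\underline{s}$ and $p_n$ as in \eqref{Sig5}. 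This identity algebraically separates the ``singled-out-$\la_{2n+1}$'' contribution (which will generate $\Sigma^\al$ and $\Sigma^{\al-1}$) from the ``symmetric-in-$(\la_1,\ldots,\la_{2n})$'' contribution (which will generate $\Sigma^1$).

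By the permutation symmetry of the integrand in \eqref{nonl4}, the $2n+1$ paradifferential sub-regions (one for each choice of ``output'' frequency) contribute equally and cancel the $1/(2n+1)$ prefactor; away from these sub-regions at least two frequencies are comparable. Fixing $\la_{2n+1}$ as the output and inserting the cutoffs $\prod_{i=1}^{2n}\varphi_{\leq -4n-30}(\la_i/\la_{2n+1})$ matching \eqref{Sig4}, and setting $\zeta = (\xi+\la_{2n+1})/2$, $\rho = \xi - \la_{2n+1}$ per the Weyl convention \eqref{Tsigmaf2}, the paradifferential contribution becomes $iT_{\Sigma_n}h$ where $\widetilde{\Sigma}_n(\rho,\zeta)$ is a $(2n-1)$-dimensional integral in the remaining low frequencies.

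I would then Taylor-expand $\mathcal{J}_n$ in the paradifferential region. Using $|\la_{2n+1}|/|\zeta| = 1 - \rho/(2\zeta) + O((\rho/\zeta)^2)$ and $\int_{[0,1]^{2n}}(\sum s_j\la_j)d\underline{s} = \rho/2$, a direct computation produces a first-order cancellation, giving $\mathcal{J}_n(\underline{\la}) = |\zeta|^{\al-1} + O(|\zeta|^{\al-3}(|\rho|^2+|\underline{\la}|^2))$. Hence $\gamma\xi\,\mathcal{J}_n = \Lambda(\zeta)\big(1 + \rho/(2\zeta)\big) + R_n$, where $R_n$ is a symbol of order $\leq \al - 2$. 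The leading term $\Lambda(\zeta)$ yields $\widetilde{\Sigma}^\al$ as in \eqref{Sig1}, the Weyl centering $\Lambda(\zeta)\rho/(2\zeta)$ yields $\widetilde{\Sigma}^{\al-1} = (\rho/(2\zeta))\widetilde{\Sigma}^\al$ as in \eqref{Sig3}, and the $-\gamma\xi\,p_n$ piece (with $\gamma\xi \approx \gamma\zeta$) yields $\widetilde{\Sigma}^1$ as in \eqref{Sig2} plus a negligible order-$0$ correction $-\gamma\rho p_n/2$ absorbable into $E$. The cutoff $\varphi_{\geq 30}(\zeta)$ is inserted by placing the $|\zeta| \leq 2^{30}$ region into $E$, where $T_\Sigma h$ loses no derivatives.

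The error bounds \eqref{plo1} follow from: the multiplier $S^\infty$-bounds \eqref{nonl7} and \eqref{nonl13}; the sharp decay \eqref{prodecay0} from Lemma \ref{Poi2}, which places at least two low-frequency $h$-factors in $L^\infty$ to generate the $\langle t\rangle^{-1}$ decay; the multilinear bounds \eqref{mk6.5} of Lemma \ref{touse} to distribute derivatives (crucially, in the error sub-regions two comparable frequencies can jointly absorb the $H^{N_0\al}$ loss without loss on any single factor); and \eqref{mk6.9}--\eqref{mk6.10} for commuting $S$ through the multilinear operators, where the commutator symbol $\widetilde{m}$ has the same structural bounds. The $n$-sum converges thanks to $\|h(t)\|_{L^\infty}\lesssim \e_1\langle t\rangle^{-1/2}$ against the $C^n$-growth in \eqref{nonl7}. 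The symbol bounds \eqref{Sig8} follow from \eqref{Sig1}--\eqref{Sig3} via $L^\infty\times\cdots\times L^\infty$ estimates (for $\mathcal{L}^\infty_\kappa$) and $L^2\times L^\infty\times\cdots\times L^\infty$ with one factor replaced by $Sh$ or $x\partial_xh$ (for $\mathcal{L}^2_\kappa$ and $S_{x,\zeta}$ bounds). Real-valuedness of $\Sigma^\al, \Sigma^1$ follows from $h$ real via $\overline{\widetilde{g}_n(\eta,\zeta)} = -\widetilde{g}_n(-\eta,\zeta)$, giving $\overline{\widetilde{\Sigma}(\rho,\zeta)} = \widetilde{\Sigma}(-\rho,\zeta)$ after a sign count. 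The main technical obstacle is the bookkeeping of $R_n$: one must verify that the second-order Taylor remainders and all non-paradifferential regions combine to an $(\al-2)$-order symbol acting on $h$ with the cubic decay $\langle t\rangle^{-1}$, uniformly in $n$ to allow summation.
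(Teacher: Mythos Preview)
Your proposal is correct and follows essentially the same route as the paper's proof. The paper likewise derives the factorization $m_n = C_{n,\al}\big(\sum\la_j\big)\big(\prod_{j\leq 2n}\la_j\big)[\mathcal{J}_n - p_n]$ (their formula (3.22), obtained by integrating also in $s_{2n+1}$), then splits $m_n = m_n^\al + m_n^1 + m_n^{\al-1} + m_n^0$ by Taylor-expanding $|1+\sum s_i\mu_i|^{\al-1}$ around $\la_{2n+1}$, and finally passes to Weyl symbols.

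The one organizational difference worth noting: the paper expands around $\la_{2n+1}$ rather than around $\zeta$, so the order-$(\al-1)$ contribution arises as the sum of two pieces, $B^{\al-1}_{n,1}$ (from the Weyl recentering $|\la_{2n+1}|^{\al-1}\la_{2n+1} = |\zeta|^{\al-1}\zeta(1-\al\rho/(2\zeta)) + O(\rho^2)$) and $B^{\al-1}_{n,2}$ (from the linear Taylor term $(\al-1)\sum s_i\mu_i$), which are then checked to add up to $(\rho/2\zeta)\Sigma^\al$. Your expansion directly in the Weyl variable $\zeta$ exploits the identity $\int_{[0,1]^{2n}}(\sum s_j\la_j - \rho/2)\,d\underline{s}=0$ to kill the first-order term in one stroke, which is a mild but genuine streamlining of the same computation. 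Everything else---the treatment of the comparable-frequency region, the $m_n^0$ error, the symbol bounds via \eqref{nonl7} and \eqref{nonl13}, and the handling of $S$ via \eqref{mk6.9}--\eqref{mk6.10}---matches the paper's Steps 1--6.
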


\begin{proof} We use the formulas \eqref{nonl4}. The idea is to extract "error" terms that can be estimated like in \eqref{plo1}, and identify the main contributions to the symbols in $\Sigma$. Most of the bounds we prove rely on the symbol bounds \eqref{nonl7} and \eqref{nonl13}, Lemma \ref{touse}, and the bounds on $h$,
\begin{equation}\label{nok2.7}
\|h(t)\|_{H^{N_0\alpha}}+\|Sh(t)\|_{H^{N_1\alpha}}\lesssim\e_1\langle t\rangle^{p_0},\qquad \sup_{k\in\mathbb{Z}}\,(2^{k/2-\beta k}+2^{N_2\al k})\|P_kh(t)\|_{L^\infty}\lesssim \e_1\langle t\rangle^{-1/2}.
\end{equation}
These bounds follow from the bootstrap assumptions \eqref{zxc4} and Lemma \ref{Poi2}.

{\bf{Step 1.}} We bound first the contribution when the two highest frequencies are proportional. For $n\geq 1$ we define the multilinear operators $L_n$ by
\begin{equation}\label{cru0}
\begin{split}
\mathcal{F}\{L_n[f_1,\ldots,f_{2n},f_{2n+1}]\}(\xi)&:=\frac{i}{2n+1}\int_{\RR^{2n}} \widehat{f_1}(\eta_1)\ldots \widehat{f_{2n}}(\eta_{2n})\\
&\times \widehat{f_{2n+1}}\Big(\xi-\sum_{i=1}^{2n}\eta_{i}\Big)m_n\Big(\eta_1,\eta_2,\ldots,\eta_{2n},\xi-\sum_{i=1}^{2n}\eta_{i}\Big) d\underline{\eta}.
\end{split}
\end{equation}
Notice that $\mathcal{N}_n=L_n[h,\ldots,h,h]$, see \eqref{nonl4}. For $\underline{k}:=(k_1,\ldots,k_{2n+1})\in\mathbb{Z}^{2n+1}$ let
\begin{equation}\label{cru10}
\mathcal{N}_{n;\underline{k}}:=L_n[P_{k_1}h,\ldots,P_{k_{2n+1}}h].
\end{equation}

Let
\begin{equation}\label{cru11}
X_n:=\{\underline{k}=(k_1,\ldots,k_{2n+1})\in\mathbb{Z}^{2n+1}:k_{\mathrm{max}}\leq \max(0,k_{\mathrm{sec}})+4n+40\},
\end{equation}
where $k_{\mathrm{sec}}$ is the second larger of the numbers $k_1,\ldots,k_{2n+1}$. We will show that
\begin{equation}\label{cru13}
\sum_{\underline{k}\in X_n}\big\{\|\mathcal{N}_{n;\underline{k}}(t)\|_{H^{N_0\alpha}}+\|S\mathcal{N}_{n;\underline{k}}(t)\|_{H^{N_1\alpha}}\big\}\lesssim_n\e_1^{2n+1}\langle t\rangle^{-1+p_0}.
\end{equation}

We use \eqref{mk6.5}, \eqref{nonl7}, and \eqref{nok2.7}. Assuming, for example, that $k_{2n+1}=k_{\mathrm{max}}$, we estimate
\begin{equation*}
\begin{split}
\|\mathcal{N}_{n;\underline{k}}(t)\|_{H^{N_0\al}}&\lesssim_n 2^{N_0\al\max(k_{2n+1},0)}2^{k_1+\ldots+k_{2n}}2^{\al k_{2n+1}}\|P_{k_{2n+1}}h(t)\|_{L^2}\prod_{i=1}^{2n}\|P_{k_i}h(t)\|_{L^\infty}\\
&\lesssim_n \prod_{i=1}^{2n}\frac{\varep_1\langle t\rangle^{-1/2}2^{k_i}}{2^{k_i/2}+2^{N_2\al k_i}}2^{\al \max(k_{2n+1},0)}\varep_1\langle t\rangle^{p_0}.
\end{split}
\end{equation*} 
 Thus
\begin{equation*}
\sum_{\underline{k}\in X_n}\|\mathcal{N}_{n;\underline{k}}(t)\|_{H^{N_0\alpha}}\lesssim_n\e_1^{2n+1}\langle t\rangle^{-1+p_0}.
\end{equation*}

To estimate $S\mathcal{N}_{n;\underline{k}}$ we use also the identities \eqref{mk6.9}--\eqref{mk6.10}. In our case, since $m_n$ is homogeneous, see \eqref{non4.2}, we have
\begin{equation}\label{cru15}
(\xi_1\partial_{\xi_1}+\ldots+\xi_{2n+1}\partial_{\xi_{2n+1}})m_n(\xi_1,\ldots,\xi_{2n+1})=(2n+\al)m_n(\xi_1,\ldots,\xi_{2n+1}).
\end{equation}
Estimating as before, with the term containing the vector-field in $L^2$ and the others in $L^\infty$, and noticing that $N_1\leq N_2-4$ we have
\begin{equation*}
\begin{split}
\|S\mathcal{N}_{n;\underline{k}}\|_{H^{N_1\al}}\lesssim_n \e_1^{2n+1}\langle t\rangle^{p_0-1}2^{\min(k_{\mathrm{min}},0)/2}2^{-\max(k_{\mathrm{max}},0)}
\end{split}
\end{equation*}
for any $\underline{k}\in X_n$. The desired conclusion \eqref{cru13} follows.

It remains to control the contribution of $\underline{k}\notin X_n$. Notice that, by symmetry,
\begin{equation}\label{cru20}
\sum_{\underline{k}\notin X_n}\mathcal{N}_{n;\underline{k}}=(2n+1)\sum_{k\geq 4n+41}L_n[P_{\leq k-4n-41}h,\ldots,P_{\leq k-4n-41}h,P_kh].
\end{equation} 

{\bf{Step 2.}} To deal with the contribution of \eqref{cru20} we derive first more favorable formulas for the multipliers $m_n$. Using the mean value theorem
\begin{align*}
\prod_{i=1}^{2n+1}&\Big(\frac{1-e^{-i\lambda_i y}}{y}\Big)\frac{1}{|y|^{\al-1}\sign(y)}=\frac{1}{|y|^{\al-1}\sign(y)}\int_{[0,1]^{2n+1}}\prod_{i=1}^{2n+1}i\lambda_{i}e^{-is_i\lambda_i y} d\underline{s} \nonumber \\
& = i(-1)^n\Big(\prod_{i=1}^{2n+1}\lambda_{i}\Big)\int_{[0,1]^{2n+1}} e^{-i\sum_{i=1}^{2n+1}s_i\lambda_iy} \frac{1}{|y|^{\al-1}\sign(y)}d\underline{s}.
\end{align*}
We recall the following identity for $0 < g < 1$,
\begin{align*}
 \int_{\RR} \frac{e^{iwy}}{|y|^{g}\sign(y)} dy = 2i |w|^{g-1}\sign(w)\Gamma(1-g)\cos\left(\frac{g\pi}{2}\right).
\end{align*}
We use these identities with $g = \al-1$ and $w = -\sum_{i=1}^{2k+1}s_i\lambda_i$, together with \eqref{non4.1}. Thus
\begin{equation}\label{nonl5}
\begin{split}
m_n(\lambda_1,&\lambda_2,\ldots,\lambda_{2n+1})=\Gamma(2-\al)\sin\left(\frac{\al\pi}{2}\right) \frac{2d_n(-1)^n}{(2\pi)^{2n}}\\
&\times\Big(\prod_{i=1}^{2n+1}\lambda_{i}\Big)\Big(\sum_{i=1}^{2n+1}\lambda_i\Big)\int_{[0,1]^{2n+1}}\Big|\sum_{i=1}^{2n+1}s_i\lambda_i\Big|^{\al-2}\sign\Big(\sum_{i=1}^{2n+1}s_i\lambda_i\Big)d\underline{s},
\end{split}
\end{equation}
where $\underline{s}=(s_1,\ldots,s_{2n+1})$. 

Assume that
\begin{equation}\label{cru1}
\lambda_{2n+1}\geq 2^{2n+10}\lambda_{j}\text{ for any }j\in\{1,\ldots,2n\}.
\end{equation}
Let $\mu_i = \frac{\lambda_{i}}{\lambda_{2n+1}}$. The formula \eqref{nonl5} gives
\begin{equation*}
\begin{split}
m_n(\lambda_1,\lambda_2,\ldots,\lambda_{2n+1})&=C_{n,\al}(\al-1)|\lambda_{2n+1}|^{\al-1}\Big(\prod_{i=1}^{2n}\lambda_{i}\Big)\Big(\sum_{i=1}^{2n+1}\lambda_i\Big)\\
&\times\int_{[0,1]^{2n+1}}\Big|s_{2n+1} + \sum_{i=1}^{2n}s_i\mu_i\Big|^{\al-2}\sign\Big(s_{2n+1}+\sum_{i=1}^{2n}s_i\mu_i\Big)d\underline{s},
\end{split}
\end{equation*}
where, with $\gamma$ as in \eqref{Poi7},
\begin{align*}
C_{n,\al}:=\frac{\Gamma(2-\al)}{\al-1}\sin\left(\frac{\al\pi}{2}\right) \frac{2d_n(-1)^n}{(2\pi)^{2n}}=\gamma\frac{d_n(-1)^n}{(2\pi)^{2n}}.
\end{align*}
Integrating in $s_{2n+1}$ we obtain
\begin{equation}\label{cru7}
\begin{split}
m_n(\lambda_1,\ldots,\lambda_{2n+1}) & = C_{n,\al}|\lambda_{2n+1}|^{\al-1}\Big(\sum_{i=1}^{2n+1}\lambda_i\Big) \Big(\prod_{i=1}^{2n}\lambda_{i}\Big)\\
&\times\int_{[0,1]^{2n}}\Big(\Big|1 + \sum_{i=1}^{2n}s_i\mu_i\Big|^{\al-1}-\Big|\sum_{i=1}^{2n}s_i\mu_i\Big|^{\al-1}\Big)ds_{1}\ldots ds_{2n}.
\end{split}
\end{equation}

We write
\begin{equation*}
\Big|1 + \sum_{i=1}^{2n}s_i\mu_i\Big|^{\al-1}=1+(\al-1)\sum_{i=1}^{2n}s_i\mu_i+O(|\mu|^2),
\end{equation*}
and decompose the multipliers $m_n$ accordingly. Thus let
\begin{equation}\label{cru9}
m_n^\al(\lambda_1,\ldots,\lambda_{2n+1}):=C_{n,\al}|\lambda_{2n+1}|^{\al-1}\lambda_{2n+1}\Big(\prod_{i=1}^{2n}\lambda_{i}\Big),
\end{equation}
\begin{equation}\label{cru9.1}
\begin{split}
m_n^1(\lambda_1,\ldots,\lambda_{2n+1})&:=C_{n,\al}|\lambda_{2n+1}|^{\al-1}\Big(\sum_{i=1}^{2n+1}\lambda_{i}\Big)\Big(\prod_{i=1}^{2n}\lambda_{i}\Big)\int_{[0,1]^{2n}}-\Big|\sum_{i=1}^{2n}s_i\mu_i\Big|^{\al-1}\,d\underline{s}\\
&=-C_{n,\al}\Big(\sum_{i=1}^{2n+1}\lambda_{i}\Big)\Big(\prod_{i=1}^{2n}\lambda_{i}\Big)\int_{[0,1]^{2n}}\Big|\sum_{i=1}^{2n}s_i\lambda_i\Big|^{\al-1}\,d\underline{s},
\end{split}
\end{equation}
\begin{equation}\label{cru9.2}
\begin{split}
m_n^{\al-1}(\lambda_1,\ldots,\lambda_{2n+1})&:=C_{n,\al}|\lambda_{2n+1}|^{\al-1}\Big(\sum_{i=1}^{2n}\lambda_{i}\Big)\Big(\prod_{i=1}^{2n}\lambda_{i}\Big)\\
&+C_{n,\al}|\lambda_{2n+1}|^{\al-1}\lambda_{2n+1}\Big(\prod_{i=1}^{2n}\lambda_{i}\Big)\int_{[0,1]^{2n}}(\al-1)\Big(\sum_{i=1}^{2n}s_i\mu_i\Big)\,d\underline{s}\\
&=\frac{(\al+1)C_{n,\al}}{2}|\lambda_{2n+1}|^{\al-1}\Big(\sum_{i=1}^{2n}\lambda_{i}\Big)\Big(\prod_{i=1}^{2n}\lambda_{i}\Big),
\end{split}
\end{equation}
\begin{equation}\label{cru9.3}
\begin{split}
m_n^{0}(\lambda_1,\ldots,\lambda_{2n+1})&=C_{n,\al}|\lambda_{2n+1}|^{\al-1}\Big(\prod_{i=1}^{2n}\lambda_{i}\Big)\Big\{\Big(\sum_{i=1}^{2n}\lambda_i\Big) \frac{\al-1}{2}\Big(\sum_{i=1}^{2n}\mu_{i}\Big)\\
&+\Big(\sum_{i=1}^{2n+1}\lambda_i\Big)\int_{[0,1]^{2n}}\Big(\Big|1 + \sum_{i=1}^{2n}s_i\mu_i\Big|^{\al-1}-1-(\al-1)\sum_{i=1}^{2n}s_i\mu_i\Big)\,d\underline{s}\Big\},
\end{split}
\end{equation}
where $\underline{s}=(s_1,\ldots,s_{2n})$.

Notice that
\begin{equation*}
m_n=m_n^\al+m_n^1+m_n^{\al-1}+m_n^0.
\end{equation*}
The multipliers $m_n^\al$, $m_n^1$, and $m_n^{\al-1}$ will essentially generate the symbols $\Sigma^\al$, $\Sigma^1$, and $\Sigma^{\al-1}$ in \eqref{plo2}. The multipliers $m_n^0$ will only add contributions in the error term $E$. 

{\bf{Step 3.}} As in \eqref{cru0}, for $\kappa\in\{\al,1,\al-1,0\}$ we define the multilinear operators $L_n^\kappa$ by
\begin{equation}\label{cru0.1}
\begin{split}
\mathcal{F}\{L_n^\kappa[f_1,\ldots,f_{2n},f_{2n+1}]\}(\xi)&:=\frac{i}{2n+1}\int_{\RR^{2n}} \widehat{f_1}(\eta_1)\ldots \widehat{f_{2n}}(\eta_{2n})\\
&\times \widehat{f_{2n+1}}\left(\xi-\sum_{i=1}^{2n}\eta_{i}\right)m_n^\kappa\left(\eta_1,\eta_2,\ldots,\eta_{2n},\xi-\sum_{i=1}^{2n}\eta_{i}\right) d\underline{\eta}.
\end{split}
\end{equation}
We examine the formula \eqref{cru20} and define, for $k\geq 4n+41$,
\begin{equation}\label{cru0.2}
\mathcal{L}_{n;k}^\kappa(t):=(2n+1)L_n^\kappa[P_{\leq k-4n-41}h(t),\ldots,P_{\leq k-4n-41}h(t),P_kh(t)].
\end{equation}
Clearly,
\begin{equation}\label{cru0.3}
\mathcal{N}_n=\sum_{\underline{k}\in X_n}\mathcal{N}_{n;\underline{k}}+\sum_{k\geq 4n+41}\{\mathcal{L}_{n;k}^0+\mathcal{L}_{n;k}^\alpha+\mathcal{L}_{n;k}^1+\mathcal{L}_{n;k}^{\alpha-1}\}.
\end{equation}

We have already seen, as a consequence of \eqref{cru13}, that the contribution of the terms $\mathcal{N}_{n;\underline{k}}$ can be incorporated into the error term $E$. The contribution of $\mathcal{L}_{n;k}^0$ can also be incorporated into this error term because
\begin{equation}\label{cru30}
\sum_{k\geq 4n+41}\big\{\|\mathcal{L}_{n;k}^0(t)\|_{H^{N_0\alpha}}+\|S\mathcal{L}_{n;k}^0(t)\|_{H^{N_1\alpha}}\big\}\lesssim_n\e_1^{2n+1}\langle t\rangle^{-1+p_0}.
\end{equation}
Indeed, these bounds follow as in the proof of \eqref{cru13}. We only need to notice that the symbols $m_n^0$ are homogeneous of order $2n+\al$ and do not lose derivatives, i.e.
\begin{equation*}
\big\|\mathcal{F}^{-1}[m_n^0(\la_1,\ldots,\la_{2n+1})\varphi_{k_1}(\la_1)\ldots\varphi_{k_{2n+1}}(\la_{2n+1})]\big\|_{L^1(\mathbb{R}^{2n+1})}\lesssim_n 2^{k_1+\ldots+k_{2n}}2^{2k_{\mathrm{sec}}}2^{(\al-2)k_{\mathrm{max}}},
\end{equation*} 
see \eqref{cru9.3}, where $k_{\mathrm{sec}}=\max(k_1,\ldots,k_{2n})$ and $k_{\mathrm{sec}}+2n+20\leq k_{2n+1}$.

{\bf{Step 4.}} We consider now the leading contributions, corresponding to the functions $\mathcal{L}_{n;k}^\al$. We define the symbols $\Sigma^\al_n$ and $B^{\al-1}_{n,1}$, by
\begin{equation}\label{cru31}
\begin{split}
\widetilde{\Sigma^\al_n}(\rho,\zeta)&:=2\pi C_{n,\al}\zeta|\zeta|^{\al-1}\varphi_{\geq 30}(\zeta)\widetilde{H_n}(\rho,\zeta),\\
\widetilde{B^{\al-1}_{n,1}}(\rho,\zeta)&:=2\pi C_{n,\al}\zeta|\zeta|^{\al-1}\varphi_{\geq 30}(\zeta)\frac{-\al\rho}{2\ze}\widetilde{H_n}(\rho,\zeta),
\end{split}
\end{equation} 
where, with $\underline{\mu}=(\mu_1,\ldots,\mu_{2n-1})$ and $g_n$ defined as in \eqref{Sig4},
\begin{equation}\label{cru32}
\widetilde{H_n}(\rho,\zeta):=\int_{\mathbb{R}^{2n-1}}\Big[\prod_{i=1}^{2n-1}\widetilde{g_n}(\mu_i,\zeta)\Big]\widetilde{g_n}\Big(\rho-\sum_{i=1}^{2n-1}\mu_i,\ze\Big)\,d\underline{\mu}.
\end{equation}
See also subsection \ref{ParaDiffCalc} for the notation related to paradifferential calculus. We notice that the symbols $\Sigma^\al_n$ are real-valued (thus the operators $T_{\Sigma_n^\al}$ are self-adjoint), but the symbols $B^{\al-1}_{n,1}$ are not real-valued. Notice also that $\sum_{n\geq 1}\Sigma_n^\al=\Sigma^\al$, compare with \eqref{Sig1}.

We define
\begin{equation}\label{cru35}
E^1_{n;k}:=\mathcal{L}^\al_{n,k}-iT_{\Sigma_n^\al}P_kh-iT_{B_{n,1}^{\al-1}}P_kh
\end{equation} 
and we will prove that $\sum_kE^1_{n,k}$ are acceptable errors, i.e.
\begin{equation}\label{cru36}
\sum_{k\geq 4n+41}\big\{\|E^1_{n;k}(t)\|_{H^{N_0\alpha}}+\|SE^1_{n;k}(t)\|_{H^{N_1\alpha}}\big\}\lesssim_n\e_1^{2n+1}\langle t\rangle^{-1+p_0}.
\end{equation}
Indeed, we start from the formula
\begin{equation}\label{cru40}
\widehat{\mathcal{L}^\al_{n,k}}(\xi)=iC_{n,\al}\int_{\RR^{2n}} \widehat{P_kh}(\xi-\rho)|\xi-\rho|^{\al-1}(\xi-\rho)\Big(\prod_{i=1}^{2n-1}\widehat{f}(\mu_i)\Big)\widehat{f}\Big(\rho-\sum_{i=1}^{2n-1}\mu_{i}\Big)\,d\rho d\underline{\mu},
\end{equation}
where $\widehat{f}(\eta):=\eta\widehat{h}(\eta)\varphi_{\leq k-4n-41}(\eta)$. This follows from the definitions \eqref{cru9}, \eqref{cru0.1}--\eqref{cru0.2}, and a change of variables. With $\zeta:=\xi-\rho/2$ we notice that we may replace the factor $|\xi-\rho|^{\al-1}(\xi-\rho)$ in \eqref{cru40} with $|\zeta|^{\al-1}\zeta[1-\al\rho/(2\zeta)]$ at the expense of an error $O(\rho^2/\xi^2)$, which leads to error terms satisfying \eqref{cru36}. Thus we may replace $\mathcal{L}^\al_{n,k}$ with $iT_{A_{n,k}}P_kh$, where
\begin{equation*}
\begin{split}
&\widetilde{A_{n,k}}(\rho,\zeta):=2\pi C_{n,\al}\zeta|\zeta|^{\al-1}\Big(1-\frac{\al\rho}{2\zeta}\Big)\varphi_{[k-4,k+4]}(\zeta)\widehat{H_{n,k}}(\rho),\\
&\widehat{H_{n,k}}(\rho):=\int_{\RR^{2n-1}} \Big(\prod_{i=1}^{2n-1}\widehat{f}(\mu_i)\Big)\widehat{f}\Big(\rho-\sum_{i=1}^{2n-1}\mu_{i}\Big)\, d\underline{\mu},\qquad \widehat{f}(\eta):=\eta\widehat{h}(\eta)\varphi_{\leq k-4n-41}(\eta).
\end{split}
\end{equation*}

Finally, we compare the symbols $A_{n,k}$ and $\Sigma_n^\al+B_{n,1}^{\al-1}$. Notice that if $k\geq 4n+41$ then
\begin{equation*}
\begin{split}
\widetilde{A_{n,k}}&(\rho,\zeta)-\varphi_{[k-4,k+4]}(\zeta)[\widetilde{\Sigma^\al_n}(\rho,\zeta)+\widetilde{B^{\al-1}_{n,1}}(\rho,\zeta)]=\widetilde{\Delta_{n,k}}(\rho,\zeta)\\
&:=2\pi C_{n,\al}\varphi_{[k-4,k+4]}(\zeta)\zeta|\zeta|^{\al-1}\Big(1-\frac{\al\rho}{2\zeta}\Big)[\widehat{H_{n,k}}(\rho)-\widetilde{H_n}(\rho,\zeta)].
\end{split}
\end{equation*}
It is easy to see that 
\begin{equation*}
\|\Delta_{n,k}\|_{\mathcal{L}^\infty_0}\lesssim_n\e_1^{2n}2^{-\beta k}\langle t\rangle^{-1},\qquad \|\Delta_{n,k}\|_{\mathcal{L}^2_0}+\|S_{x,\zeta}\Delta_{n,k}\|_{\mathcal{L}^2_0}\lesssim_n\e_1^{2n}2^{-\beta k}\langle t\rangle^{-1/2+p_0},
\end{equation*}
see definitions \eqref{nor1}, using the fact that $\widehat{H_{n,k}}(\rho)-\widetilde{H_n}(\rho,\zeta)$ is nontrivial only when at least one the factors of $h$ has frequency $\geq k-4n-50$. The bounds \eqref{LqBdTa} and the identities \eqref{Alu3} show that
\begin{equation*}
\sum_{k\geq 4n+41}\big\{\|T_{\Delta_{n,k}}P_k(t)\|_{H^{N_0\alpha}}+\|ST_{\Delta_{n,k}}P_k(t)\|_{H^{N_1\alpha}}\big\}\lesssim_n\e_1^{2n+1}\langle t\rangle^{-1+p_0}.
\end{equation*} 
This completes the proof of \eqref{cru36}.

{\bf{Step 5.}} We consider now the contributions corresponding to the functions $\mathcal{L}_{n;k}^1$. We define the symbols $\Sigma^1_n$ by
\begin{equation}\label{cru50}
\begin{split}
\widetilde{\Sigma^1_n}(\rho,\zeta):=-2\pi C_{n,\al}\zeta\varphi_{\geq 30}(\zeta)\int_{\mathbb{R}^{2n-1}}&p_n\Big(\underline{\mu},\rho-\sum_{i=1}^{2n-1}\mu_i\Big)\\
&\times\Big[\prod_{i=1}^{2n-1}\widetilde{g_n}(\mu_i,\zeta)\Big]\widetilde{g_n}\Big(\rho-\sum_{i=1}^{2n-1}\mu_i\Big)\,d\underline{\mu},
\end{split}
\end{equation}
where $\widetilde{g_n}$ are $p_n$ are defined as in \eqref{Sig4} and \eqref{Sig5}. The multipliers $p_n$ satisfy the properties summarized in Lemma \ref{mbounds} (iii). As in {\bf{Step 4}} it is not hard to see that the error terms $E^2_{n,k}:=\mathcal{L}^1_{n;k}-iT_{\Sigma^1_n}P_kh$ satisfy bounds similar to \eqref{cru36}. In other words, one can replace $\sum_{k\geq 4n+41}\mathcal{L}^1_{n;k}$ with $iT_{\Sigma^1_n}P_{\geq 4n+41}h$ at the expense of acceptable errors.

{\bf{Step 6.}} We consider the remaining contributions corresponding to the functions $\mathcal{L}_{n;k}^{\al-1}$. Let
\begin{equation}\label{cru41}
\widetilde{B^{\al-1}_{n,2}}(\rho,\zeta):=2\pi \frac{(\al+1)C_{n,\al}}{2}|\zeta|^{\al-1}\varphi_{\geq 30}(\zeta)\rho\widetilde{H_n}(\rho,\zeta),
\end{equation} 
where $H_n$ is as in \eqref{cru32}. As in {\bf{Step 4}} the error terms $E^3_{n,k}:=\mathcal{L}^{\al-1}_{n;k}-iT_{B^{\al-1}_{n,2}}P_kh$ satisfy bounds similar to \eqref{cru36}, so one can replace $\sum_{k\geq 4n+41}\mathcal{L}^1_{n;k}$ with $iT_{B^{\al-1}_{n,2}}P_{\geq 4n+41}h$ at the expense of acceptable errors. 

Finally, we notice that the symbols $B^{\al-1}_{n,1}$ (defined in \eqref{cru31}) and $B^{\al-1}_{n,2}$ combine to produce the symbols $\Sigma^{\al-1}$ defined in \eqref{Sig3}. The bounds \eqref{Sig8} follow directly from the definitions \eqref{nor1}, the pointwise bounds \eqref{prodecay0}, and the bootstrap assumptions \eqref{zxc4}. This completes the proof of the lemma. 
\end{proof}

\subsection{The normalized variable $h^\ast$}\label{hstar} We would like to use Lemma \ref{MainhDec} to prove energy estimates. To do this directly we would need the operator $T_\Sigma$ to be self-adjoint, which is equivalent to the symbol $\Sigma$ being real-valued. This is true for the symbols $\Sigma^\al$ and $\Sigma^1$, but not for the symbol $\Sigma^{\al-1}$. We correct this by introducing the so-called ``good variable'' $h^\ast$.

We define the symbol $Q$ by the formula
\begin{equation}\label{gri4}
Q(x,\zeta):=\varphi_{\geq 30}(\zeta)\sum_{n\geq 1} d_n(-1)^n(g_n(x,\zeta))^{2n}
\end{equation}
where the symbols $g_n$ are as in \eqref{Sig4}. Notice that
\begin{equation}\label{gri4.1}
\Sigma^{\al}(x,\ze)=\Lambda(\ze)Q(x,\ze),\qquad i\Sigma^{\al-1}=\frac{\Lambda(\ze)}{2\zeta}(\partial_x Q)(x,\zeta).
\end{equation}

\begin{lemma}\label{nok0}
Let
\begin{equation}\label{nok1}
b(x,\zeta,t):=(1+Q(x,\ze,t))^{1/(2\al)},\qquad h^\ast:=T_{b}h.
\end{equation}
Then we have
\begin{equation}\label{nok2}
\partial_t h^\ast=i\Lambda h^\ast+iT_{\Sigma^\al+\Sigma^1}h^\ast+E',
\end{equation}
where the symbols $\Sigma^\al$ and $\Sigma^1$ are defined as in \eqref{Sig1}--\eqref{Sig2}. The functions $h^\ast, E'\in C([0,T]:H^{N_0\al})$ satisfy the bounds
\begin{equation}\label{nok2.5}
\|h^\ast(t)\|_{H^{N_0\alpha}}+\|Sh^\ast(t)\|_{H^{N_1\alpha}}\lesssim\e_1\langle t\rangle^{p_0},\qquad \sup_{k\in\mathbb{Z}}\,(2^{k/2-\beta k}+2^{N_2\al k})\|P_kh^\ast(t)\|_{L^\infty}\lesssim \e_1\langle t\rangle^{-1/2},
\end{equation}
and
\begin{equation}\label{nok3}
\|E'(t)\|_{H^{N_0\alpha}}+\|SE'(t)\|_{H^{N_1\alpha}}\lesssim\e_1^2\langle t\rangle^{-1+p_0}.
\end{equation}
\end{lemma}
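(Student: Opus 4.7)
I would differentiate $h^\ast = T_b h$ in time and substitute \eqref{yui1} for $\partial_t h$, obtaining
\[
\partial_t h^\ast = iT_b\Lambda h + iT_b T_{\Sigma^\al+\Sigma^1+\Sigma^{\al-1}} h + T_{\partial_t b}h + T_b E.
\]
Subtracting the target right-hand side $i\Lambda T_b h + iT_{\Sigma^\al+\Sigma^1}T_b h$ expresses the sought-after error as
\[
E' = i[T_b,\Lambda]h + iT_b T_{\Sigma^{\al-1}}h + i[T_b,T_{\Sigma^\al+\Sigma^1}]h + T_{\partial_t b}h + T_b E.
\]
The heart of the argument is the cancellation at leading order between the first two terms, which is exactly what forces the precise choice $b=(1+Q)^{1/(2\al)}$.

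To verify this cancellation I would apply the Weyl composition formula \eqref{WeylB2}, which yields $i[T_b,\Lambda] = -T_{\{b,\Lambda\}} + R_1$ and $iT_bT_{\Sigma^{\al-1}} = iT_{b\Sigma^{\al-1}} + R_2$ with $R_1,R_2$ gaining two derivatives (and hence absorbable). A direct computation using $\Lambda'(\ze) = \al\Lambda(\ze)/\ze$, $\partial_x b = \tfrac{1}{2\al}(1+Q)^{1/(2\al)-1}\partial_x Q$, together with the identity $i\Sigma^{\al-1} = \frac{\Lambda(\ze)}{2\ze}\partial_x Q$ from \eqref{gri4.1}, then gives
\[
ib\Sigma^{\al-1} - \{b,\Lambda\} = \frac{\Lambda(\ze)}{2\ze}\,(1+Q)^{1/(2\al)-1}\,Q\,\partial_x Q,
\]
an order-$(\al-1)$ symbol that is at least quartic in $h$. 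The exponent $s=1/(2\al)$ is chosen exactly so that the linear-in-$Q$ pieces of $ib\Sigma^{\al-1}$ and of $\{b,\Lambda\}$ coincide, leaving only this higher-order residue; by Lemma \ref{PropProd}, \eqref{Sig8}, and the pointwise decay \eqref{prodecay0}, its contribution to $E'$ respects \eqref{nok3}.

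The remaining pieces are handled by the same machinery. The commutator $i[T_b,T_{\Sigma^\al+\Sigma^1}]$ is again a Poisson-bracket symbol modulo a two-derivative gain, and since $b-1$ and $\Sigma^\kappa$ ($\kappa\in\{\al,1\}$) are all quadratic in $h$ with bounds \eqref{Sig8} and their $b$-analogues, the product rule \eqref{nor2} renders this quartic; for $\partial_t b = \tfrac{1}{2\al}(1+Q)^{1/(2\al)-1}\partial_t Q$, I would substitute $\partial_t \widehat{h} = i\Lambda\widehat h+\widehat{\mathcal{N}}$ into the definition of $g_n$, and the cutoff $\varphi_{\le -4n-30}(\rho/\ze)$ ensures the resulting symbol is of order $0$ in $\ze$ and produces a cubic-in-$h$ term; and $T_b E$ is controlled by \eqref{LqBdTa} using $\|b\|_{\mathcal{L}^\infty_0}\lesssim 1$. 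The bounds \eqref{nok2.5} on $h^\ast$ itself follow from writing $h^\ast = h + T_{b-1}h$ and noting $b-1 = O(Q) = O(\e_1^2\langle t\rangle^{-1})$ in $\mathcal{L}^\infty_0$. For every $S$-weighted estimate I would invoke \eqref{Alu3}, which reduces $S$ on paradifferential operators to $S_{x,\ze}$ acting on symbols (bounded by \eqref{Sig8} and analogues for $b$) plus $S$ acting on $h$ (bounded by \eqref{zxc4}).

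The main obstacle I anticipate is the bookkeeping: verifying that $b$, $\partial_t b$, $\{b,\Lambda\}$, and their $S_{x,\ze}$-derivatives all belong to the correct classes $\mathcal{L}^q_l$ with sharp enough constants so that the quartic residue above is absorbable at every frequency scale, and that the various cubic-in-$h$ contributions recombine with weights compatible with \eqref{nok3} rather than losing a derivative or a decay factor. In particular one must check that the low-frequency cutoff on $\zeta$ in $Q$, and the cutoff on $\rho/\zeta$ in $g_n$, propagate correctly through all these symbol manipulations.
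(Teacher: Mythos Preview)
Your overall strategy is the same as the paper's: differentiate $h^\ast=T_bh$, substitute \eqref{yui1}, and use the Weyl calculus to reduce to a symbolic cancellation dictated by the choice $b=(1+Q)^{1/(2\al)}$. The algebra you display is correct. However, your organization misses a second cancellation that is essential.

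You compute the residue $ib\Sigma^{\al-1}-\{b,\Lambda\}=\tfrac{\Lambda(\ze)}{2\ze}(1+Q)^{1/(2\al)-1}Q\,\partial_xQ$, note that it is a \emph{quartic} symbol of order $\al-1$, and assert that Lemma~\ref{PropProd} together with \eqref{Sig8} and the pointwise decay makes it absorbable in $H^{N_0\al}$. This step fails: with $\|\sigma\|_{\mathcal{L}^\infty_{\al-1}}\lesssim\e_1^4\langle t\rangle^{-2}$ one only gets $2^{N_0\al k^+}\|P_kT_\sigma h\|_{L^2}\lesssim 2^{(\al-1)k^+}\e_1^5\langle t\rangle^{-2+p_0}$, which is not square-summable over $k\ge 0$; putting $P_kh$ in $L^\infty$ instead produces the even worse factor $2^{(N_0-N_2)\al k^+}$. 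The same objection applies to your separate treatment of $i[T_b,T_{\Sigma^\al}]h$, whose principal symbol $-\{b,\Sigma^\al\}$ also contains a quartic order-$(\al-1)$ piece.

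The paper avoids this by keeping the two terms together and reducing to the single symbol $\Sigma^{\al-1}b+i\{b,\Lambda+\Sigma\}$. The point, which you should verify, is that since $b=F(Q)$ one has $\partial_xb=F'(Q)\partial_xQ$ and $\partial_\ze b=F'(Q)\partial_\ze Q$, so in $\{b,\Sigma^\al\}=\partial_xb\,\partial_\ze\Sigma^\al-\partial_\ze b\,\partial_x\Sigma^\al$ the two terms carrying $\Lambda(\ze)$ cancel identically and only $\{b,\Sigma^\al\}=F'(Q)\Lambda'(\ze)Q\,\partial_xQ$ survives. This is precisely the negative of your residue, so $\Sigma^{\al-1}b+i\{b,\Lambda\}+i\{b,\Sigma^\al\}=0$ (up to harmless cutoff contributions supported where $|\ze|\approx 2^{30}$). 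What remains, $i\{b,\Sigma^1+\Sigma^{\al-1}\}$, is genuinely of order $\le 0$ and quartic, hence absorbable. In short: do not split off $[T_b,T_{\Sigma^\al}]$; the claim ``order $0$'' in the paper is exact, not approximate, and relies on this further algebraic identity.
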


\begin{proof}
We apply $T_b$ to the equation \eqref{yui1}, so
\begin{equation}\label{nok4}
\partial_t h^\ast=[\partial_t,T_b]h+iT_{\Lambda+\Sigma}h^\ast+i[T_b,T_{\Lambda+\Sigma}]h+T_bE.
\end{equation}
Notice that $Q$, $b-1$, and $\partial_tb$ are symbols of order $0$, 
\begin{equation}\label{nok5}
\|b'(t)\|_{\mathcal{L}^\infty_0}\lesssim \e_1^2\langle t\rangle^{-1},\qquad\|b'(t)\|_{\mathcal{L}^2_0}+\|S_{x,\zeta}b'(t)\|_{\mathcal{L}^2_0}\lesssim \e_1^2\langle t\rangle^{-1/2+p_0},
\end{equation}
where $b'\in\{Q,b-1,\partial_tb\}$. This shows that $[\partial_t,T_b]h$ and $T_bE$ are acceptable error terms satisfying \eqref{nok3}. Moreover, the bounds \eqref{nok2.5} hold, using also \eqref{nok2.7}. 

To prove the desired identity \eqref{nok2} it suffices to show that the term
\begin{equation*}
iT_{\Sigma^{\al-1}}h^\ast+i[T_b,T_{\Lambda+\Sigma}]h
\end{equation*}
is an error type term satisfying \eqref{nok3}. In view of Lemma \ref{PropProd}, we may replace the expression above by 
\begin{equation}\label{gri1}
iT_{\Sigma^{\al-1}b+i\{b,\Lambda+\Sigma\}}h,
\end{equation}
at the expense of acceptable errors. Finally, we notice that the choice of the symbol $b$ in \eqref{nok1} is such that $\Sigma^{\al-1}b+i\{b,\Lambda+\Sigma\}$ is a symbol of order $0$, satisfying bounds similar to \eqref{nok5}, so the expression in \eqref{gri1} is again a suitable error term. The conclusion of the lemma follows.
\end{proof}

\subsection{The improved energy bounds}\label{inpren}

We can now prove the bounds in Proposition \ref{proEEZ}.

\begin{lemma}\label{hener}
With the assumptions in Proposition \ref{MainProp}, we have, for any $t\in[0,T]$,
\begin{equation}\label{Ens29}
\|h(t)\|_{H^{N_0 \al}}+\|Sh(t)\|_{H^{N_1 \al}}\lesssim \e_0\langle t\rangle^{p_0}.
\end{equation}
\end{lemma}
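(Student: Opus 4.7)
\medskip

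\noindent\textbf{Proof plan.} The strategy is to transfer the energy estimate to the good variable $h^\ast = T_b h$ from Lemma \ref{nok0}, which satisfies the renormalized paradifferential equation \eqref{nok2} with self-adjoint leading operator. I would carry this out in four steps.

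\smallskip

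\emph{Step 1: Sobolev bound for $h^\ast$.} Setting $L := \langle D\rangle^{N_0\alpha}$, I would compute
\begin{equation*}
\frac{d}{dt}\|h^\ast\|_{H^{N_0\alpha}}^2 = 2\mathrm{Re}\,\langle L^2\partial_t h^\ast, h^\ast\rangle = 2\mathrm{Re}\,\langle iL^2\Lambda h^\ast, h^\ast\rangle + 2\mathrm{Re}\,\langle iL^2 T_{\Sigma^\alpha+\Sigma^1}h^\ast, h^\ast\rangle + 2\mathrm{Re}\,\langle L^2 E', h^\ast\rangle.
\end{equation*}
The first contribution vanishes identically because $iL^2\Lambda$ is a skew-adjoint Fourier multiplier. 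For the second, since $\Sigma^\alpha$ and $\Sigma^1$ are real-valued, the operator $T_{\Sigma^\alpha+\Sigma^1}$ is self-adjoint in Weyl quantization, so this reduces to $\langle i[L^2, T_{\Sigma^\alpha+\Sigma^1}]h^\ast, h^\ast\rangle$. Using Lemma \ref{PropProd} (in particular \eqref{WeylB2}), the bounds \eqref{Sig8}, and the paradifferential structure (whereby the $x$-Fourier support of $\Sigma^\kappa$ is localized much below $\zeta$, so derivatives fall cleanly on the symbol and the resulting paraproduct can be pointwise-controlled by $\|\Sigma^\kappa\|_{\mathcal{L}^\infty_\kappa}\lesssim\e_1^2\langle t\rangle^{-1}$), I would estimate this commutator by $C\e_1^2\langle t\rangle^{-1}\|h^\ast\|_{H^{N_0\alpha}}^2$. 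The error term is handled by Cauchy--Schwarz and \eqref{nok3}.

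\smallskip

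\emph{Step 2: Weighted bound for $Sh^\ast$.} Commuting $S$ with \eqref{nok2} using \eqref{Alu3}, the function $Sh^\ast$ satisfies
\begin{equation*}
\partial_t(Sh^\ast) = i\Lambda(Sh^\ast) + iT_{\Sigma^\alpha+\Sigma^1}(Sh^\ast) + iT_{S_{x,\zeta}(\Sigma^\alpha+\Sigma^1)}h^\ast + SE',
\end{equation*}
where I have used $[S, i\Lambda]=0$ since $\Lambda$ is $S$-homogeneous of degree $\alpha$ and $S = \alpha t\partial_t + x\partial_x$. An energy estimate at the $H^{N_1\alpha}$ level proceeds as in Step 1 for the first two terms on the right. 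The new commutator-like term $iT_{S_{x,\zeta}\Sigma^\kappa}h^\ast$ is estimated using the $\mathcal{L}^2$-bounds from \eqref{Sig8} combined with the pointwise bound \eqref{prodecay0} on $h$, and the fact that $N_1\alpha + \alpha \le N_0\alpha$ so $h^\ast$ is controlled in $H^{N_1\alpha+\alpha}$ at rate $\e_1\langle t\rangle^{p_0}$. This gives a contribution $\lesssim \e_1^3\langle t\rangle^{-1/2+2p_0}\|Sh^\ast\|_{H^{N_1\alpha}}$.

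\smallskip

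\emph{Step 3: Gronwall.} Combining the two differential inequalities and setting $Y(t):=\|h^\ast(t)\|_{H^{N_0\alpha}}+\|Sh^\ast(t)\|_{H^{N_1\alpha}}$, I obtain
\begin{equation*}
\tfrac{d}{dt}Y(t)^2 \lesssim \e_1^2\langle t\rangle^{-1}Y(t)^2 + \e_1^3\langle t\rangle^{-1+2p_0}.
\end{equation*}
Since $Y(0)\lesssim\e_0$ by \eqref{zxc5} (with $h^\ast(0)$ equivalent to $h(0)$ since $b-1$ is a small order-$0$ symbol), and since $C\e_1^2\ll p_0$ by the assumptions on $\e_1$ in \eqref{zxc1}, Gronwall yields $Y(t)\lesssim \e_0\langle t\rangle^{p_0}$.

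\smallskip

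\emph{Step 4: Pass from $h^\ast$ to $h$.} Since $b$ is an order-$0$ symbol with $b-1$ small and paradifferentially invertible, $h = T_{b^{-1}}h^\ast + (\text{cubic error})$, and the bounds in \eqref{LqBdTa} yield $\|h\|_{H^{N_0\alpha}}\sim \|h^\ast\|_{H^{N_0\alpha}}$. For the weighted norm, the identity \eqref{Alu3} gives $Sh = T_{b^{-1}}(Sh^\ast) + (\text{lower order})$, yielding the equivalence in the $H^{N_1\alpha}$ norm as well. This produces \eqref{Ens29}.

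\smallskip

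The main technical obstacle is Step 1: the naive estimate of $[L^2, T_{\Sigma^\alpha}]$ via its leading Weyl symbol $\{L^2,\Sigma^\alpha\}$ appears to lose $\alpha-1$ derivatives, and the key point is to exploit the fact that $\Sigma^\alpha(x,\zeta) = \Lambda(\zeta)Q(x,\zeta)$ with $Q$ of order $0$, so the commutator factors through a composition of Fourier multipliers with $T_{\partial_x Q}$, and the pointwise control $\|\partial_x Q\|_{\mathcal{L}^\infty_0}\lesssim\e_1^2\langle t\rangle^{-1}$ coming from the dispersive bound \eqref{prodecay0} makes the estimate close at the $H^{N_0\alpha}$ level.
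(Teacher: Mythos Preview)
Your Step 1 has a genuine derivative-loss problem that the proposed fix does not resolve. With $L=\langle D\rangle^{N_0\alpha}$, the term $2\mathrm{Re}\,\langle iL^2 T_{\Sigma^\alpha}h^\ast,h^\ast\rangle=\langle i[L^2,T_{\Sigma^\alpha}]h^\ast,h^\ast\rangle$ involves a commutator which, by the Weyl calculus of Lemma~\ref{PropProd}, is of order $2N_0\alpha+\alpha-1$. Since $\alpha>1$ this exceeds $2N_0\alpha$, so bounding the quadratic form by $\|h^\ast\|_{H^{N_0\alpha}}^2$ is impossible without further structure. Your factorization $\Sigma^\alpha=\Lambda(\zeta)Q(x,\zeta)$ does not help: at leading order $[L^2,T_{\Sigma^\alpha}]\approx iT_{\{L^2,\Sigma^\alpha\}}$ with $\{L^2,\Sigma^\alpha\}=-(\partial_\zeta L^2)\,\Lambda(\zeta)\,\partial_xQ$, and this symbol is still of order $(2N_0\alpha-1)+\alpha+0=2N_0\alpha+\alpha-1$; routing it through Fourier multipliers composed with $T_{\partial_xQ}$ changes nothing about the total order. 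The leading symbol is real, so $i[L^2,T_{\Sigma^\alpha}]$ is self-adjoint rather than skew-adjoint, and no cancellation occurs upon taking the real part. (Incidentally, in Step~2 the identity $[S,i\Lambda]=0$ is false; one has $[S,\partial_t-i\Lambda]=-\alpha(\partial_t-i\Lambda)$, which is how the paper uses it, but this is a minor point.)

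The paper avoids this loss by a different choice of energy functional. Instead of commuting the fixed multiplier $\langle D\rangle^{N_0\alpha}$ through the equation, it uses the powers $T_B^m$, $B:=\Lambda+\Sigma^\alpha+\Sigma^1$, as the differentiation operators: one proves
\[
\sum_{m=0}^{N_0}\|T_B^m h^\ast\|_{L^2}+\sum_{m=0}^{N_1}\|T_B^m Sh^\ast\|_{L^2}\lesssim\e_0\langle t\rangle^{p_0}.
\]
Because $T_B$ is self-adjoint ($B$ real-valued) and tautologically commutes with itself, the identity $\frac{d}{dt}\langle T_B^m h^\ast,T_B^m h^\ast\rangle=2\Re\langle[\partial_t,T_B^m]h^\ast,T_B^m h^\ast\rangle+2\Re\langle T_B^m E',T_B^m h^\ast\rangle$ has no dangerous commutator: only $[\partial_t,T_B^m]$ appears, and this is controlled by $\|\partial_t\Sigma^\kappa\|_{\mathcal{L}^\infty_\kappa}\lesssim\e_1^2\langle t\rangle^{-1}$. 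One then passes back to the Sobolev scale by the elliptic comparison $\Lambda^m=T_B^m+\{\Lambda^m-T_B^m\}$, where the difference is lower order since $B-\Lambda=\Sigma^\alpha+\Sigma^1$ is small. This is the standard device for energy estimates in quasilinear equations with principal symbol of order $>1$, and it is the missing ingredient in your argument.
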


\begin{proof} We divide the proof in several steps. Let $B:=\Lambda+\Sigma^\al+\Sigma^1$.

{\bf{Step 1.}} We prove first suitable bounds for $h^\ast$, i.e.
\begin{equation}\label{Ens40}
\sum_{m=0}^{N_0}\|T_B^mh^\ast(t)\|_{L^2}+\sum_{m=0}^{N_1}\|T_B^mSh^\ast(t)\|_{L^2}\lesssim \e_0\langle t\rangle^{p_0}.
\end{equation}
For this we start from the identities \eqref{nok2}, written in the form $\partial_th^\ast=iT_Bh^\ast+E'$, recall that the operators $T_B^m$ are self-adjoint, and apply energy estimates,
\begin{equation}\label{Ens41}
\frac{d}{dt}\langle T_B^mh^\ast,T_B^mh^\ast\rangle=2\Re\langle [\partial_t,T_B^m]h^\ast,T_B^mh^\ast\rangle+2\Re\langle T_B^mE',T_B^mh^\ast\rangle.
\end{equation}
Notice that, for $m\in[0,N_0]$,
\begin{equation*}
\|T_B^m h^\ast(t)\|_{L^2}\lesssim \e_1\langle t\rangle^{p_0},\qquad\|[\partial_t,T_B^m]h^\ast(t)\|_{L^2}+\|T_B^mE'(t)\|_{L^2}\lesssim \e_1^2\langle t\rangle^{-1+p_0},
\end{equation*}
as a consequence of Lemma \ref{PropProd} (i), the bootstrap assumptions \eqref{zxc4}, and the symbol bounds. Thus the right-hand side of \eqref{Ens41} is dominated by $C\e_1^3\langle t\rangle^{-1+2p_0}$. We integrate in time and use the initial-time bound $\|T_B^mT_bh(0)\|_{L^2}\lesssim \e_0$ to prove the first inequality in \eqref{Ens40}. 

The proof of the weighted bound in \eqref{Ens40} is similar. Since $[S,\partial_t-i\Lambda]=(-\al)(\partial_t-i\Lambda)$, it follows from \eqref{nok2} that
\begin{equation*}
(\partial_t-i\Lambda)Sh^\ast=iT_{\Sigma^\al+\Sigma^1}(Sh^\ast)+i[S,T_{\Sigma^\al+\Sigma^1}]h^\ast+SE'+\al(iT_{\Sigma^\al+\Sigma^1}h^\ast+E').
\end{equation*}
As before, we apply the operators $T_B^m$, $m\in[0,N_1]$, and derive the identities
\begin{equation}\label{Ens42}
\begin{split}
\frac{d}{dt}\langle T_B^mSh^\ast,T_B^mSh^\ast\rangle&=2\Re\langle [\partial_t,T_B^m]Sh^\ast,T_B^mSh^\ast\rangle\\
&+2\Re\langle T_B^m[iT_{S_{x,\ze}(\Sigma^\al+\Sigma^1)}h^\ast+\al iT_{\Sigma^\al+\Sigma^1}h^\ast+SE'+\al E'], T_B^mSh^\ast\rangle.
\end{split}
\end{equation}
It follows from Lemma \ref{PropProd} (i) and the bootstrap assumptions \eqref{zxc4} that
\begin{equation*}
\|T_B^mSh^\ast(t)\|_{L^2}\lesssim \e_1\langle t\rangle^{p_0}
\end{equation*}
and
\begin{equation*}
\begin{split}
\|[\partial_t,T_B^m]Sh^\ast(t)\|_{L^2}&+\|T_B^mT_{S_{x,\ze}(\Sigma^\al+\Sigma^1)}h^\ast(t)\|_{L^2}+\|T_B^mT_{\Sigma^\al+\Sigma^1}h^\ast(t)\|_{L^2}\\
&+\|T_B^mSE'(t)\|_{L^2}+\|T_B^mE'(t)\|_{L^2}\lesssim \e_1^2\langle t\rangle^{-1+p_0},
\end{split}
\end{equation*}
for any $m\in[0,N_1]$. Thus the right-hand side of \eqref{Ens42} is dominated by $C\e_1^3\langle t\rangle^{-1+2p_0}$, and the second bound in \eqref{Ens40} follows by integration in time.

{\bf{Step 2.}} We can prove now the bounds \eqref{Ens29}, using just elliptic estimates. The bounds \eqref{Ens40}, together with the simple identities
\begin{equation*}
\Lambda^m=T_B^m+\{\Lambda^m-(\Lambda+T_{\Sigma^\al+\Sigma^1})^m\},
\end{equation*}
and the symbol bounds \eqref{Sig8}, show that
\begin{equation*}
\|h^\ast(t)\|_{H^{N_0\al}}+\|Sh^\ast(t)\|_{N^{N_1\al}}\lesssim \e_0\langle t\rangle^{p_0}.
\end{equation*}
The desired conclusions \eqref{Ens29} follow once we recall that $h=h^\ast-T_{b-1}h$ and use \eqref{nok5}. 
\end{proof}

\begin{lemma}\label{proEEv}
With the assumptions in Proposition \ref{MainProp}, we have, for any $t\in[0,T]$,
\begin{equation}\label{Ens22}
\|x\partial_xv(t)\|_{L^2}\lesssim \e_0\langle t\rangle^{p_0}.
\end{equation}
\end{lemma}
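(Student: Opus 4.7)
The plan is to reduce the bound on $\|x\partial_x v(t)\|_{L^2}$ to two inputs that are already available: the weighted energy bound on $Sh$ from Lemma \ref{hener}, and the paradifferential expression for the nonlinearity from Lemma \ref{MainhDec}.

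First I would exploit the fact that the scaling vector field $S=\alpha t\partial_t+x\partial_x$ commutes with the linear flow $e^{\pm it\Lambda}$. This is a consequence of the homogeneity identity $\xi\Lambda'(\xi)=\alpha\Lambda(\xi)$ for $\Lambda(\xi)=\gamma\xi|\xi|^{\alpha-1}$: using \eqref{Alu2.1} and $\widehat{h}(\xi,t)=e^{it\Lambda(\xi)}\widehat{v}(\xi,t)$, a direct computation on the Fourier side gives
\begin{equation*}
\widehat{Sh}(\xi,t)=e^{it\Lambda(\xi)}\bigl[\alpha t\partial_t\widehat{v}(\xi,t)-\xi\partial_\xi \widehat{v}(\xi,t)-\widehat{v}(\xi,t)\bigr]=e^{it\Lambda(\xi)}\widehat{Sv}(\xi,t),
\end{equation*}
so that $Sv=e^{-it\Lambda}Sh$ and $\|Sv(t)\|_{L^2}=\|Sh(t)\|_{L^2}\lesssim\|Sh(t)\|_{H^{N_1\alpha}}\lesssim\e_0\langle t\rangle^{p_0}$ by Lemma \ref{hener}.

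Next I would write $x\partial_x v(t)=Sv(t)-\alpha t\,\partial_t v(t)$ and focus on the remaining term $\|t\,\partial_t v(t)\|_{L^2}$. Since $\partial_t v=e^{-it\Lambda}(\partial_t h-i\Lambda h)=e^{-it\Lambda}\mathcal{N}$ by \eqref{Poi4} and $e^{-it\Lambda}$ is unitary on $L^2$, this reduces to estimating $t\|\mathcal{N}(t)\|_{L^2}$. To bound $\|\mathcal{N}\|_{L^2}$, I would invoke the paradifferential identity \eqref{yui1}, which gives $\mathcal{N}=iT_\Sigma h+E$. For the paradifferential piece, the order-$\alpha$ symbol estimate $\|\Sigma\|_{\mathcal{L}^\infty_\alpha}\lesssim\e_1^2\langle t\rangle^{-1}$ in \eqref{Sig8} combined with Lemma \ref{PropProd}(i) (after square-summing in $k$) and the energy control $\|h(t)\|_{H^\alpha}\lesssim\e_1\langle t\rangle^{p_0}$ yield $\|T_\Sigma h(t)\|_{L^2}\lesssim\e_1^3\langle t\rangle^{-1+p_0}$. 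The error term is controlled by \eqref{plo1}: $\|E(t)\|_{L^2}\leq\|E(t)\|_{H^{N_0\alpha}}\lesssim\e_1^2\langle t\rangle^{-1+p_0}$. Hence $\|\mathcal{N}(t)\|_{L^2}\lesssim\e_1^2\langle t\rangle^{-1+p_0}$, which gives
\begin{equation*}
\|t\,\partial_t v(t)\|_{L^2}\lesssim\e_1^2\langle t\rangle^{p_0}\lesssim\e_0^{4/3}\langle t\rangle^{p_0}\lesssim\e_0\langle t\rangle^{p_0},
\end{equation*}
where the exchange $\e_1^2\lesssim\e_0$ uses the assumption $\e_1\leq\e_0^{2/3}$ from \eqref{zxc1} together with the smallness of $\e_0$. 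Combined with the bound on $\|Sv(t)\|_{L^2}$, this yields \eqref{Ens22}.

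There is no genuine obstacle here: once the commutation $Sv=e^{-it\Lambda}Sh$ is recognized, the rest is a purely algebraic rearrangement and a direct $L^2$ bound on the nonlinearity, both of which are consequences of results already proved. The only point requiring care is that the pre-factor of $t$ must be absorbed, which is exactly why the paradifferential reduction of $\mathcal{N}$ in Lemma \ref{MainhDec}, giving decay of order $\langle t\rangle^{-1+p_0}$, is essential; a crude bound based solely on the bootstrap assumptions would not suffice.
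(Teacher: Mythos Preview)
Your proof is correct and follows essentially the same approach as the paper: both use the identity $Sh=e^{it\Lambda}(x\partial_x v+\alpha t\,\partial_t v)$ (equivalently, your $Sv=e^{-it\Lambda}Sh$) to reduce to estimating $\|\partial_t v\|_{L^2}=\|\mathcal{N}\|_{L^2}$, and both invoke Lemma~\ref{MainhDec} for the required $\langle t\rangle^{-1+p_0}$ decay. You spell out more explicitly than the paper how $\mathcal{N}=iT_\Sigma h+E$ is bounded using \eqref{Sig8} and \eqref{plo1}, and how $\e_1^2\lesssim\e_0$ is used, but the argument is the same.
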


\begin{proof}
Notice that 
\begin{equation}\label{eqprof}
e^{it\Lambda} \partial_t v = (\partial_t - i\Lambda) h = \mathcal{N}=\sum_{n\geq 1}\mathcal{N}_n,
\end{equation}
using \eqref{Poi4} and the definition $v(t)=e^{-it\Lambda}h(t)$. Notice also that
\begin{equation}\label{StoV}
Sh = e^{it\Lambda}(x\partial_xv) + \alpha te^{it\Lambda}(\partial_tv),
\end{equation}
which is a simple consequence of the identity $h(t)=e^{it\Lambda}v(t)$. Since we have already proved that $\|Sh\|_{L^2}\lesssim\varep_0\langle t \rangle^{p_0}$, for \eqref{Ens22} it suffices to show that
\begin{equation}\label{estN'}
{\| \partial_tv(t)\|}_{L^2} \lesssim  \e_0\langle t \rangle^{-1+p_0}.
\end{equation}
This follows from \eqref{eqprof} and Lemma \ref{MainhDec}.
\end{proof}

\section{Modified scattering and pointwise decay}\label{Poi1}

In this section we prove the following:

\begin{proposition}\label{proZ}
With the assumptions in Proposition \ref{MainProp}, we have, for any $t\in[0,T]$,
\begin{equation}
\label{proZconc}
\sup_{t\in[0,T]} {\|v(t)\|}_Z \lesssim \e_0.
\end{equation}
\end{proposition}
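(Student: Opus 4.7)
The plan is to apply the $Z$-norm method with modified scattering, as outlined in subsection \ref{MainIdeas}. Starting from $\partial_t v = e^{-it\Lambda}\mathcal{N}$ and expanding $\mathcal{N}=\sum_{n\geq 1}\mathcal{N}_n$ via \eqref{Nonl2}--\eqref{nonl4}, the Duhamel identity on the profile side reads
\begin{equation*}
\partial_t\widehat{v}(\xi,t)=\sum_{n\geq 1}\frac{i}{2n+1}\int_{\mathbb{R}^{2n}}e^{it\Phi_n(\xi,\underline{\eta})}\,m_n\big(\underline{\eta},\xi-\textstyle\sum_i\eta_i\big)\prod_{i=1}^{2n}\widehat{v}(\eta_i,t)\,\widehat{v}\big(\xi-\textstyle\sum_i\eta_i,t\big)\,d\underline{\eta},
\end{equation*}
with cubic phase $\Phi_1(\xi,\eta_1,\eta_2)=-\Lambda(\xi)+\Lambda(\eta_1)+\Lambda(\eta_2)+\Lambda(\xi-\eta_1-\eta_2)$. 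The quintic and higher terms ($n\geq 2$) are controlled by the sharp pointwise bound $\|h(t)\|_{L^\infty}\lesssim\varepsilon_1\langle t\rangle^{-1/2}$ from Lemma \ref{Poi2}: estimating $2n$ factors in $L^\infty$ and one in $L^2$ produces a time integrand $\lesssim\varepsilon_1^{2n+1}\langle t\rangle^{-n+p_0}$, which is integrable for $n\geq 2$ and therefore contributes only $O(\varepsilon_1^5)$ to $\|v(t)\|_Z$. The analysis thus reduces to the cubic term.

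\textbf{Resonance structure.} Since $\Lambda'(\eta)=\alpha\gamma|\eta|^{\alpha-1}>0$, the critical points of $\Phi_1(\xi,\cdot,\cdot)$ are those $(\eta_1,\eta_2)$ satisfying $|\eta_1|=|\eta_2|=|\xi-\eta_1-\eta_2|$. A short case analysis on the sign choices shows that $\Phi_1$ vanishes at a critical point exactly when the multiset $\{\eta_1,\eta_2,\xi-\eta_1-\eta_2\}$ equals $\{\xi,\xi,-\xi\}$, giving three symmetric resonant points. Applying stationary phase in two variables at each such point, and exploiting that $h$ is real so $\widehat{v}(-\xi,t)=\overline{\widehat{v}(\xi,t)}\,e^{-2it\Lambda(\xi)}$, one extracts the leading diagonal
\begin{equation*}
\partial_t\widehat{v}(\xi,t)=\frac{ic(\xi)}{t}\,|\widehat{v}(\xi,t)|^2\,\widehat{v}(\xi,t)+R(\xi,t),
\end{equation*}
where $c(\xi)\in\mathbb{R}$ is homogeneous of degree $\alpha$, computed from $m_1$ and the Hessian of $\Phi_1$ at the resonance, and $R$ collects the off-diagonal critical points (where $\Phi_1\neq 0$ and a normal-form integration by parts in $t$ gains another $t^{-1}$), the non-stationary region (integration by parts in $\eta$), and the second-order stationary-phase error in a neighborhood of the resonance.

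\textbf{Modified scattering and the key remainder bound.} With $\widetilde{c}$ as in \eqref{nf50} (chosen so that $\widetilde{c}(\xi)|\widehat{v}(\xi,s)|^2/(s+1)$ cancels $ic(\xi)/t\cdot|\widehat v|^2\widehat v$ up to a harmless $1/t$ vs.\ $1/(t+1)$ correction), define
\begin{equation*}
\widehat{v^\ast}(\xi,t):=\widehat{v}(\xi,t)\,e^{iL(\xi,t)},\qquad L(\xi,t):=\widetilde{c}(\xi)\int_0^t|\widehat{v}(\xi,s)|^2\,\frac{1}{s+1}\,ds.
\end{equation*}
Since $L$ is real, $|\widehat{v^\ast}|=|\widehat{v}|$, so $\|v^\ast(t)\|_Z=\|v(t)\|_Z$. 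A direct calculation gives $\partial_t\widehat{v^\ast}(\xi,t)=e^{iL(\xi,t)}R(\xi,t)$, and it suffices to prove the pointwise remainder estimate
\begin{equation*}
\big(|\xi|^{1/2+\beta}+|\xi|^{N_2\alpha+1}\big)\,|R(\xi,t)|\lesssim\varepsilon_1^3\,\langle t\rangle^{-1-\delta},\qquad \xi\in\mathbb{R},\ t\in[0,T],
\end{equation*}
for some $\delta\gtrsim\beta$. Integrating in time and noting $\|v^\ast(0)\|_Z\lesssim\varepsilon_0$ yields $\|v(t)\|_Z\lesssim\varepsilon_0+\varepsilon_1^3\lesssim\varepsilon_0$, closing the bootstrap thanks to the choice $\varepsilon_1\leq\varepsilon_0^{2/3}\ll 1$ from \eqref{zxc1}.

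\textbf{Main obstacle.} The hard part is the remainder estimate above. For output frequencies $|\xi|\lesssim\langle t\rangle^{100 p_0}$ (the regime where the $|\xi|^{1/2+\beta}$ weight is binding) I would localize dyadically in $|\xi|$ and in each input frequency, and split into three regions: (i) the non-stationary region, handled by integration by parts in $\eta$ against $\nabla_\eta\Phi_1$ using the $S^\infty$ bounds on $m_1$ and $\partial m_1$ from Lemma \ref{mbounds}(ii); (ii) the non-resonant critical points, where $|\Phi_1|\gtrsim|\xi|^\alpha$ allows a normal-form integration by parts in $t$, converting $\partial_t\widehat v$ into nonlinear terms that are again cubic and smaller; (iii) a small neighborhood of the resonance, where the second-order stationary-phase remainder gains an extra $\langle t\rangle^{-1/2}$. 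The bookkeeping is tight: one dispersive estimate from Lemma \ref{dispersive} contributes $\langle t\rangle^{-1/2}|\xi|^{1-\alpha/2}=\langle t\rangle^{-1/2}|\xi|^{5\beta}$, stationary phase contributes a second $\langle t\rangle^{-1/2}$, and the $|\xi|^{1/2+\beta}$ weight in the $Z$-norm is calibrated precisely to absorb the resulting $|\xi|^{5\beta}$ power while leaving a net $\langle t\rangle^{-\beta}$ gain; the $\langle t\rangle^{p_0}$ energy losses from Proposition \ref{proEEZ} (invoked through the weighted bound $\|x\partial_xv(t)\|_{L^2}\lesssim\varepsilon_0\langle t\rangle^{p_0}$, which justifies localization of each profile to space scale $\lesssim\langle t\rangle^{1/2}$) are dominated since $p_0\ll\beta$. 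For the complementary high-frequency piece $|\xi|\gtrsim\langle t\rangle^{100 p_0}$, where $|\xi|^{N_2\alpha+1}$ dominates, modified scattering is not needed: one instead interpolates the energy control $\|h\|_{H^{N_0\alpha}}\lesssim\varepsilon_0\langle t\rangle^{p_0}$ against $\|x\partial_xv\|_{L^2}\lesssim\varepsilon_0\langle t\rangle^{p_0}$ via Lemma \ref{interpolation}, and the large gap $N_0-N_2\geq 12$ produces a polynomial gain in $2^{-k}$ that absorbs every $\langle t\rangle^{p_0}$ factor.
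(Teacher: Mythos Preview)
Your overall architecture matches the paper's (modified scattering, extraction of the resonant diagonal, dyadic frequency analysis, splitting the cubic phase space into non-stationary/non-resonant/near-resonant regions), but there are two genuine gaps in the way you have set things up.

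\textbf{The pointwise-in-$t$ remainder bound is not attainable.} You claim it suffices to prove
\[
\big(|\xi|^{1/2+\beta}+|\xi|^{N_2\alpha+1}\big)\,|R(\xi,t)|\lesssim\varepsilon_1^3\,\langle t\rangle^{-1-\delta}
\]
and then, for region (ii) (non-resonant critical points), you invoke ``normal-form integration by parts in $t$.'' These two statements are incompatible. Integration by parts in $t$ acts on $\int_{t_1}^{t_2}e^{is\Phi}(\cdots)\,ds$ and produces boundary terms at $t_1,t_2$ that are only $O(2^{-m/4})$ in size, not $O(\langle s\rangle^{-1-\delta})$ pointwise in $s$; see the paper's Lemma \ref{lemma3}, where the entire estimate is on the time-integrated quantity. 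At the critical points $(\iota_1,\iota_2,\iota_3)\in\{(+,+,+),(+,-,-),(-,+,-),(-,-,+)\}$ one has $\nabla_\eta\Phi=0$ but $|\Phi|\approx 2^{\alpha k}$, so stationary phase in $\eta$ alone gives only $t^{-1}$ decay (not integrable), and the extra gain must come from the time integration. The correct formulation, which the paper uses (Lemma \ref{mainlem}), is to bound $\big\|\,(|\xi|^{1/2+\beta}+|\xi|^{N_2\alpha+1})\,[\widehat{v^\ast}(\xi,t_2)-\widehat{v^\ast}(\xi,t_1)]\,\big\|_{L^\infty_\xi}\lesssim\varepsilon_02^{-p_0m}$ for $t_1\le t_2\in[2^m,2^{m+1}]$, and then sum over $m$.

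\textbf{The quintic-and-higher estimate does not directly give $Z$-norm control.} Placing $2n$ factors in $L^\infty$ and one in $L^2$ yields an $L^2$ bound on $\mathcal{N}_n$, not a weighted $L^\infty_\xi$ bound on $\widehat{\mathcal{N}_n}$. The paper closes this gap as follows (Lemma \ref{rbounds} and the proof of \eqref{mainlemma2conc3}): prove both $\|\mathcal{R}_{\geq 2}\|_{L^2}\lesssim\varepsilon_0\langle t\rangle^{-3/2}$ \emph{and} $\|S\mathcal{R}_{\geq 2}\|_{L^2}\lesssim\varepsilon_0\langle t\rangle^{-3/2}$, set $F(\xi)=\varphi_k(\xi)\int_{t_1}^{t_2}e^{iL(\xi_0,s)}e^{-is\Lambda(\xi)}\widehat{\mathcal{R}_{\geq 2}}(\xi,s)\,ds$, and then apply the interpolation inequality $\|\widehat{P_kF}\|_{L^\infty}^2\lesssim 2^{-k}\|F\|_{L^2}(2^k\|\partial F\|_{L^2}+\|F\|_{L^2})$. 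The $\partial_\xi F$ piece is handled by writing $\xi\partial_\xi=(\xi\partial_\xi-\alpha s\partial_s)+\alpha s\partial_s$: the first bracket is exactly $-S-I$ on the Fourier side (controlled by $\|S\mathcal{R}_{\geq 2}\|_{L^2}$), and the $s\partial_s$ piece is integrated by parts in $s$ using $|\partial_sL|\lesssim 2^{-9m/10}$. Your sketch omits this mechanism entirely.

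A minor point: since $\Lambda$ is odd, the reality of $h$ gives $\widehat{v}(-\xi,t)=\overline{\widehat{v}(\xi,t)}$ with no extra phase; your factor $e^{-2it\Lambda(\xi)}$ should be dropped.
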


The rest of the section is concerned with the proof of this proposition. The analysis is more subtle here, and we need to differentiate between the cubic nonlinearity $\mathcal{N}_1$, which contributes to modified scattering, and the remaining quintic and higher order terms.

\subsection{Modified scattering}\label{Nonl}

We rewrite $e^{-it\Lambda}\mathcal{N}_1(t)$ in terms of the profile $v$, 
\begin{equation}\label{nf47}
\begin{split}
\mathcal{F}(e^{-it\Lambda}\mathcal{N}_1)&(\xi,t)=i\int_{\R\times\R}m_1(\eta_1,\eta_2,\xi-\eta_1-\eta_2)\\
&\times e^{it(-\Lambda(\xi)+\Lambda(\eta_1)+\Lambda(\eta_2)+\Lambda(\xi-\eta_1-\eta_2))}\widehat{v}(\eta_1,t)\widehat{v}(\eta_2,t)\widehat{v}(\xi-\eta_1-\eta_2,t)\,d\eta_1 d\eta_2,
\end{split}
\end{equation}
using \eqref{nonl4} and the formula $h(t)=e^{it\Lambda}v(t)$. The formula \eqref{eqprof} becomes
\begin{equation}\label{nf48}
(\partial_t\widehat{v})(\xi,t)= iI(\xi,t)+ e^{-it\Lambda(\xi)}\what{\mathcal{R}_{\geq 2}}(\xi,t),
\end{equation}
where
\begin{equation}\label{Rdef}
iI(\xi,t):=\mathcal{F}(e^{-it\Lambda}\mathcal{N}_1)(\xi,t)\quad\text{ and }\quad\mathcal{R}_{\geq 2}=\sum_{n\geq 2}\mathcal{N}_n.
\end{equation}

In analyzing the formula \eqref{nf48}, the main contribution comes from the stationary points of the phase function $(t,\eta_1,\eta_2)\to t\Phi(\xi,\eta_1,\eta_2)$, where
\begin{equation}\label{nf49}
\Phi(\xi,\eta_1,\eta_2)
  := -\Lambda(\xi)+\Lambda(\eta_1)+\Lambda(\eta_2)+\Lambda(\xi-\eta_1-\eta_2).
\end{equation}
More precisely, one needs to understand the contribution of the {\it{spacetime resonances}},
i.e., the points where
\begin{align*}
 \Phi(\xi,\eta_1,\eta_2) = (\partial_{\eta_1}\Phi)(\xi,\eta_1,\eta_2)
  = (\partial_{\eta_2}\Phi)(\xi,\eta_1,\eta_2) = 0.
\end{align*}
In our case, it is easy to see that the only spacetime resonances correspond to $(\xi,\eta_1,\eta_2)\in\{(\xi,\xi,\xi),(\xi,\xi,-\xi),(\xi,-\xi,\xi)\}$.
Moreover, the contribution from these points is not absolutely integrable in time,
and we have to identify and eliminate its leading order term using a suitable logarithmic phase correction.
More precisely, we define
\begin{equation}\label{nf50}
\begin{split}
& \widetilde{c}(\xi) :=-\mathcal{K}_\alpha|\xi|^{2-\alpha}[m_1(\xi,\xi,-\xi)+m_1(\xi,-\xi,\xi)+m_1(-\xi,\xi,\xi)],\qquad\mathcal{K}_\alpha:=\frac{2\pi}{\gamma\alpha(\alpha-1)},
\\
& L(\xi,t) := \widetilde{c}(\xi)\int_0^t {|\what{v}(\xi,s)|}^2 \frac{1}{s+1}\,ds.
\end{split}
\end{equation}

The formula \eqref{nf48} then becomes
\begin{equation}\label{nf51}
\begin{split}
\frac{d}{dt}[\widehat{v}(\xi,t)e^{iL(\xi,t)}]= ie^{iL(\xi,t)}
  \Big[I(\xi,t) + \widetilde{c}(\xi)\frac{|\what{v}(\xi,t)|^2}{t+1} \what{v}(\xi,t) \Big]+e^{-it\Lambda(\xi)} e^{iL(\xi,t)} \what{\mathcal{R}_{\geq 2}}(\xi,t).
\end{split}
\end{equation}
Notice that the phase $L$ is real-valued.
Therefore, to complete the proof of Proposition \ref{proZ}, it suffices to prove the following main lemma:

\begin{lemma}\label{mainlem}
For any $m \in \{1,2,\ldots\}$ and any $t_1 \leq t_2 \in [2^{m}-2,2^{m+1}] \cap [0,T]$,
we have
\begin{equation}
\label{Zcontrolconc}
{\big\| (|\xi|^{1/2+\beta} + |\xi|^{N_2\alpha+1}) [\widehat{v}(\xi,t_2)e^{iL(\xi,t_2)}-\widehat{v}(\xi,t_1)e^{iL(\xi,t_1)}] \big\|}_{L^\infty_\xi} \lesssim \e_0 2^{-p_0m}.
\end{equation}
\end{lemma}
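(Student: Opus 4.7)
The plan is to integrate the identity \eqref{nf51} over $[t_1,t_2]$ and, since $|e^{iL(\xi,t)}|=1$, reduce \eqref{Zcontrolconc} to the pointwise-in-$\xi$ bound
\begin{equation*}
\bigl(|\xi|^{1/2+\beta}+|\xi|^{N_2\alpha+1}\bigr)\bigl\{|G(\xi,t)|+\bigl|\widehat{\mathcal{R}_{\geq 2}}(\xi,t)\bigr|\bigr\}\lesssim \varepsilon_0\,\langle t\rangle^{-1-2p_0}
\end{equation*}
uniformly in $\xi\in\R$ and $t\in[2^{m-1},2^{m+1}]\cap[0,T]$, where $G(\xi,t):=I(\xi,t)+\widetilde{c}(\xi)(t+1)^{-1}|\widehat{v}(\xi,t)|^2\widehat{v}(\xi,t)$ is the phase-corrected cubic. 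Integrating in $t$ over a dyadic window then yields $C\varepsilon_0 2^{-2p_0m}$, stronger than required.

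For the higher-order remainder $\mathcal{R}_{\geq 2}=\sum_{n\geq 2}\mathcal{N}_n$ the multilinear formula \eqref{nonl4} is combined with the symbol bounds \eqref{nonl7} and the sharp linear decay of Lemma \ref{Poi2}. Each $\mathcal{N}_n$ is $(2n+1)$-linear in $h$; a Littlewood--Paley decomposition and placing all factors in $L^\infty$ produces the decay $\langle t\rangle^{-(2n+1)/2}$ times a favorable product of frequency weights, with the high-frequency tails handled by the Sobolev bound $\|h\|_{H^{N_0\alpha}}\lesssim \varepsilon_1\langle t\rangle^{p_0}$ via Bernstein. For $n\geq 2$ this beats the required $\langle t\rangle^{-1-2p_0}$ by a wide margin, even after absorbing the $Z$-weight.

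For the corrected cubic $G$, I perform Littlewood--Paley decomposition of each of the three factors in \eqref{nf47}, with $|\xi|\sim 2^k$ and $|\eta_j|\sim 2^{k_j}$, and split the $(\eta_1,\eta_2)$-integral according to the distance to the three space-time resonant points $(\eta_1,\eta_2)\in\{(\xi,\xi),(\xi,-\xi),(-\xi,\xi)\}$, which are the only solutions of $\Phi=\partial_{\eta_1}\Phi=\partial_{\eta_2}\Phi=0$. In the \emph{non-resonant} region at distance $\gtrsim \langle t\rangle^{-1/2+\delta}$ (for some $p_0\ll\delta\ll\beta$), one has either $|\Phi|\gtrsim \langle t\rangle^{-1+2\delta}$ or $|\nabla_{\eta_1,\eta_2}\Phi|\gtrsim \langle t\rangle^{-1/2+\delta}$. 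In the first case I integrate by parts in time via $e^{it\Phi}=(it\Phi)^{-1}\partial_t e^{it\Phi}$, controlling the $\partial_t\widehat{v}$ that appears using $\|\partial_t v\|_{L^2}\lesssim \varepsilon_0\langle t\rangle^{-1+p_0}$ from Lemma \ref{proEEv}; in the second case I integrate by parts in $(\eta_1,\eta_2)$, paying the derivative symbol bound \eqref{nonl7.5} and the weighted bound $\|\partial_\xi \widehat{v}\|_{L^2}\lesssim \varepsilon_0\langle t\rangle^{p_0}$ obtained from $\|x\partial_x v\|_{L^2}\lesssim\varepsilon_0\langle t\rangle^{p_0}$. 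Either way I gain an extra $\langle t\rangle^{-\delta}$ beyond the trivial $\langle t\rangle^{-1}$ bound (two copies of $\|h\|_{L^\infty}\lesssim \varepsilon_0\langle t\rangle^{-1/2}$ and one copy of $\widehat{v}$ in $L^\infty$ via the $Z$-norm), which suffices.

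On the \emph{resonant} region, the union of three $\langle t\rangle^{-1/2+\delta}$-neighborhoods of the stationary points, I perform a one-term stationary-phase expansion. The Hessian of $(\eta_1,\eta_2)\mapsto t\Phi(\xi,\eta_1,\eta_2)$ at each stationary point is nondegenerate with $|\det|=t^2\gamma^2\alpha^2(\alpha-1)^2|\xi|^{2(\alpha-2)}$, so the leading contribution is $\frac{\mathcal{K}_\alpha|\xi|^{2-\alpha}}{t+1}\cdot m_1(\text{resonant arg})\cdot |\widehat{v}(\xi,t)|^2\widehat{v}(\xi,t)$; summing over the three points and comparing with \eqref{nf50}, this matches $-\widetilde{c}(\xi)(t+1)^{-1}|\widehat{v}|^2\widehat{v}$ and exactly cancels the correction built into $G$. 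The hard part, and the main obstacle, is making the remainder in this stationary-phase expansion quantitatively sharp enough to absorb the outer weight $|\xi|^{1/2+\beta}+|\xi|^{N_2\alpha+1}$ with a $\langle t\rangle^{-2p_0}$ gain, uniformly in $\xi$. At very low frequency $|\xi|\ll 1$ the weight $|\xi|^{1/2+\beta}$ must be matched by bounding $\widehat{v}$ on a $\langle t\rangle^{-1/2+\delta}$-neighborhood via simultaneous use of the $Z$-norm and the interpolation bound of Lemma \ref{interpolation} applied to $x\partial_x v$; at very high frequency the $Z$-weight $|\xi|^{N_2\alpha+1}$ must be traded against the Sobolev bound $\|v\|_{H^{N_0\alpha}}\lesssim \varepsilon_0\langle t\rangle^{p_0}$. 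The precise values $N_0=20$, $N_1=4$, $N_2=8$, $\beta=(2-\alpha)/10$, $p_0=10^{-6}\beta$ in \eqref{zxc1} are tuned so that this frequency-dependent interpolation closes with the desired $\langle t\rangle^{-1-2p_0}$ decay.
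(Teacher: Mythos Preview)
Your overall strategy is the same space--time resonance analysis the paper carries out: localize in frequency, extract the three resonant contributions via stationary phase to cancel the logarithmic correction, and handle the rest by integration by parts in $\eta$ or in $s$. But the framing in your first paragraph is genuinely wrong and inconsistent with what you do later. You claim to reduce everything to a \emph{pointwise-in-$t$} bound
\[
(|\xi|^{1/2+\beta}+|\xi|^{N_2\alpha+1})\,|G(\xi,t)|\lesssim \varepsilon_0\langle t\rangle^{-1-2p_0},
\]
and only then integrate over $[t_1,t_2]$. This cannot hold. For the sign combination $(\iota_1,\iota_2,\iota_3)=(+,+,+)$ (and similarly $(+,-,-)$, etc.) one has $|\Phi|\gtrsim |\xi|^{\alpha}$ but $\nabla_\eta\Phi$ can vanish (at $\eta_1=\eta_2=\xi/3$), so neither the resonant correction nor integration by parts in $\eta$ helps, and a trivial $L^\infty\times L^2\times L^2$ estimate only gives $|I^{+,+,+}(\xi,t)|\lesssim \varepsilon_1^3\langle t\rangle^{-1/2+Cp_0}$, far from $\langle t\rangle^{-1-2p_0}$. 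You must integrate by parts in $s$ \emph{over the interval} $[t_1,t_2]$, as you in fact say later---but then you are no longer proving a pointwise-in-$t$ bound. The paper states the reduction correctly as a bound on the time integral (Lemma~\ref{mainlemma2}, see \eqref{mainlemma2conc1}), and you should do the same. Your formula $e^{it\Phi}=(it\Phi)^{-1}\partial_t e^{it\Phi}$ also has a spurious $t$ in the denominator; the correct identity is $e^{it\Phi}=(i\Phi)^{-1}\partial_t e^{it\Phi}$.

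For $\mathcal{R}_{\geq 2}$ your sketch is rougher than the paper's argument and the phrase ``placing all factors in $L^\infty$'' cannot be literal: to bound $\|\widehat{\mathcal{N}_n}\|_{L^\infty}\le\|\mathcal{N}_n\|_{L^1}$ via Lemma~\ref{touse} you need exponents with $\sum 1/p_i=1$, so at most $2n-1$ of the $2n+1$ factors can be in $L^\infty$, giving decay $\langle t\rangle^{-(2n-1)/2+2p_0}$ rather than $\langle t\rangle^{-(2n+1)/2}$; this is still sufficient for $n\ge 2$ but the frequency summation must be carried out. The paper takes a different and cleaner route: it proves the $L^2$ bound $\|\mathcal{R}_{\geq 2}\|_{L^2}+\|S\mathcal{R}_{\geq 2}\|_{L^2}\lesssim \varepsilon_0\langle t\rangle^{-3/2}$ (Lemma~\ref{rbounds}), then applies the interpolation inequality \eqref{interp1} not to $\mathcal{R}_{\geq 2}$ itself but to the \emph{time-integrated} function $F(\xi)=\varphi_k(\xi)\int_{t_1}^{t_2}e^{iL(\xi_0,s)}e^{-is\Lambda(\xi)}\widehat{\mathcal{R}_{\geq 2}}(\xi,s)\,ds$, controlling $\|\partial F\|_{L^2}$ by an integration by parts in $s$ that converts $\xi\partial_\xi$ into $S$. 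This avoids any pointwise-in-$\xi$ multilinear analysis of the quintic and higher terms.
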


\subsection{Proof of Lemma \ref{mainlem}}\label{secprmainlem}

In this subsection we provide the proof of the more technical Lemma \ref{mainlem}. We first notice that the desired conclusion follows easily for large and small enough frequencies.
Indeed, for any $t \in [2^{m}-2,2^{m+1}] \cap[0,T]$ and $|\xi| \approx 2^k$ with $k \in \mathbb{Z}$ and
\begin{align*}
k \in (-\infty, -10(p_0/\beta)m] \cup [5p_0m-1,\infty) ,
\end{align*}
we can use the interpolation inequality \eqref{interp1}, the bounds \eqref{Ens2}, and the assumption $N_0/2 \geq N_2+1$ to obtain
\begin{equation*}
\begin{split}
\big( |\xi|^{1/2+\beta} + &|\xi|^{N_2\al+1} \big) |\widehat{P_kv}(\xi,t)|\\
& \lesssim (2^{k(1/2+\beta)} + 2^{(N_2\al+1)k}) \big[ 2^{-k} {\|\what{P'_kv}\|}_{L^2} \big(2^k {\|\partial \what{P'_kv} \|}_{L^2}
  + {\|\what{P'_kv}\|}_{L^2} \big) \big]^{1/2}
  \\
& \lesssim \e_0 \langle t \rangle^{p_0} \min \big( 2^{\beta k} , 2^{-k/2} \big) ,
\\
& \lesssim \e_0\langle t \rangle^{-p_0}.
\end{split}
\end{equation*}

It remains to prove \eqref{Zcontrolconc} in the intermediate range $|\xi| \in [2^{-10mp_0/\beta},2^{5p_0m-1}]$.
For $k\in\mathbb{Z}$ let $v_k:=P_k v$. For any $k_1,k_2,k_3 \in \mathbb{Z}$,  let
\begin{align}
\label{rak0}
\begin{split}
I_{k_1,k_2,k_3}(\xi,t):=&\int_{\R\times\R}m_1(\eta_1,\eta_2,\xi-\eta_1-\eta_2)\\
&\times e^{it\Phi(\xi,\eta_1,\eta_2)}\widehat{v_{k_1}}(\eta_1,t)\widehat{v_{k_2}}(\eta_2,t)\widehat{v_{k_3}}(\xi-\eta_1-\eta_2,t)\,d\eta_1 d\eta_2.
\end{split}
\end{align}
Using \eqref{zxc4} and Lemma \ref{Poi2} we know that for any $t\in[0,T]$, $t'\in[t/2,2t]$, and $l \leq 0$
\begin{align}
\label{boundsflow}
\begin{split}
{\|\what{v_l}(t)\|}_{L^2} + 2^l {\|\partial\what{v_l}(t) \|}_{L^2}
  & \lesssim \e_1 \langle t \rangle^{p_0},
\\
{\|e^{it'\Lambda}v_l(t)\|}_{L^\infty} & \lesssim \e_1 2^{-l/2+2\beta l} \langle t \rangle^{-1/2},
\\
{\|\what{v_l}(t) \|}_{L^\infty} & \lesssim \e_1 2^{-l(1/2+\beta)} ,
\end{split}
\end{align}
whereas, for $l\geq 0$, 
\begin{align}
\label{boundsfhigh}
\begin{split}
{\|\what{v_l}(t)\|}_{L^2} & \lesssim \e_1 2^{-N_0\al l} \langle t \rangle^{p_0},
\\
{\|\what{v_l}(t)\|}_{L^2} + 2^l {\|\partial\what{v_l}(t) \|}_{L^2}
  & \lesssim \e_1 \langle t \rangle^{p_0},
\\
{\|e^{it'\Lambda}v_l(t)\|}_{L^\infty} & \lesssim \e_1 2^{-N_2\al l} \langle t \rangle^{-1/2},
\\
{\|\what{v_l}(t) \|}_{L^\infty} & \lesssim \e_1 2^{-(N_2\al+1) l} .
\end{split}
\end{align}

Since $h$ is real-valued, we have $\overline{\widehat{v}(\xi,t)}=\widehat{v}(-\xi,t)$. In view of \eqref{nf51}, to complete the proof of Lemma \ref{mainlem} it suffices to prove the following:

\begin{lemma}\label{mainlemma2}
Assume that $k \in [-10(p_0/\beta)m, 5p_0m]$, $|\xi| \in [2^k,2^{k+1}]$,
$m \geq 1/(100p_0)$, $t_1\leq t_2 \in [2^{m},2^{m+1}] \cap [0,T]$, and $k_1,k_2,k_3\in\mathbb{Z}$. Then
\begin{equation}
\label{mainlemma2conc1}
\begin{split}
\Big|\int_{t_1}^{t_2} e^{iL(\xi,s)} \Big[ I_{k_1,k_2,k_3}(\xi,s)
  &-\mathcal{K}_\alpha|\xi|^{2-\alpha}\frac{m_1(\xi,\xi,-\xi)\widehat{v_{k_1}}(\xi,s)\widehat{v_{k_2}}(\xi,s)\widehat{v_{k_3}}(-\xi,s)}{s+1}\\
	&-\mathcal{K}_\alpha|\xi|^{2-\alpha}\frac{m_1(\xi,-\xi,\xi)\widehat{v_{k_1}}(\xi,s)\widehat{v_{k_2}}(-\xi,s)\widehat{v_{k_3}}(\xi,s)}{s+1}\\
	&-\mathcal{K}_\alpha|\xi|^{2-\alpha}\frac{m_1(-\xi,\xi,\xi)\widehat{v_{k_1}}(-\xi,s)\widehat{v_{k_2}}(\xi,s)\widehat{v_{k_3}}(\xi,s)}{s+1}\Big]\,ds\Big|\\
	&\lesssim \e_1^32^{-20N_2p_0m}2^{-p_0\max(|k_1|,|k_2|,|k_3|)}.
\end{split}
\end{equation}
Moreover
\begin{equation}
\label{mainlemma2conc3}
\Big|\int_{t_1}^{t_2}e^{iL(\xi,s)}e^{-is\Lambda(\xi)} \what{\mathcal{R}_{\geq 2}}(\xi,s)\,ds\Big| \lesssim \e_0 2^{-20 N_2p_0m}.
\end{equation}
\end{lemma}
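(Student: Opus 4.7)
The plan is to dispose of the quintic-and-higher remainder \eqref{mainlemma2conc3} first, then carry out a space-time resonance analysis of the cubic integral $I_{k_1,k_2,k_3}$ in \eqref{mainlemma2conc1}. For \eqref{mainlemma2conc3}, each $\mathcal{N}_n$ with $n\ge 2$ is a $(2n{+}1)$-linear form whose multiplier obeys \eqref{nonl7}. Placing $(2n{-}1)$ of the factors of $h=e^{is\Lambda}v$ in $L^\infty$ via the sharp pointwise decay \eqref{prodecay0} and the remaining two in $L^2$ via \eqref{boundsfhigh}, and summing the dyadic decomposition, I expect $|\widehat{\mathcal{R}_{\geq 2}}(\xi,s)|\lesssim \e_1^5\langle s\rangle^{-2+Cp_0}$ with an absorbable power of $\langle\xi\rangle$, so integration over $[t_1,t_2]\subset[2^m,2^{m+1}]$ produces a contribution of size $\lesssim \e_1^5 2^{-m+Cp_0 m}$, far better than $\e_0 2^{-20N_2p_0 m}$.

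For \eqref{mainlemma2conc1}, I would first use the Sobolev decay $2^{-N_2\alpha k_j}$ in \eqref{boundsfhigh} and the low-frequency weight $|\xi|^{1/2+\beta}$ in the $Z$-norm (combined with the convolution constraint $\eta_1+\eta_2+(\xi{-}\eta_1{-}\eta_2)=\xi$) to reduce to the regime $|k_j|\lesssim p_0 m/\beta$ for $j=1,2,3$. Then decompose $(\eta_1,\eta_2)$-space according to $2^\ell:=\min_j|(\eta_1,\eta_2)-P_j|$, where $P_1=(\xi,\xi)$, $P_2=(\xi,-\xi)$, $P_3=(-\xi,\xi)$ are the three spacetime resonant points, and split at $\ell=L_0$ where $L_0$ is a threshold of order $-m/2+(\alpha/2-1)k+O(p_0 m)$. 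In the off-resonance region $\ell\ge L_0$ one has $|\nabla_\eta\Phi|\gtrsim 2^{(\alpha-2)k}2^\ell$, so integration by parts in $(\eta_1,\eta_2)$ gains a factor $(s\cdot 2^{(\alpha-2)k}2^\ell)^{-1}$ per step; the derivative costs are absorbed by \eqref{nonl7.5} (for $m_1$) and by $\|\partial\widehat v\|_{L^2}\lesssim\e_1 2^{-k}\langle s\rangle^{p_0}$ (for $\widehat v$), and two iterations followed by summation in $\ell$ yield an integrable-in-$s$ bound.

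In the resonance ball $\ell< L_0$ around each $P_j$, Taylor expand $m_1$ and $\widehat v(\eta_1)\widehat v(\eta_2)\widehat v(\xi{-}\eta_1{-}\eta_2)$ around $P_j$. The Hessian of $\Phi$ at $P_j$ is off-diagonal with $\partial^2_{\eta_1\eta_2}\Phi(P_j)=\mp\gamma\alpha(\alpha-1)|\xi|^{\alpha-2}$, so $P_j$ is a non-degenerate saddle of signature zero with $|\det D^2\Phi(P_j)|^{1/2}=\gamma\alpha(\alpha-1)|\xi|^{\alpha-2}$. By the standard stationary phase formula, the zeroth-order Taylor term contributes exactly $\mathcal{K}_\alpha|\xi|^{2-\alpha} m_1(\pm\xi,\pm\xi,\mp\xi)\widehat v\widehat v\widehat v/s$ with $\mathcal{K}_\alpha=2\pi/[\gamma\alpha(\alpha-1)]$, matching the three subtracted resonant terms; first-order Taylor terms produce odd integrands inside symmetric cutoffs and vanish, while the second-order remainder is estimated using $\|\partial\widehat v\|_{L^2}$ and contributes $O(\e_1^3\langle s\rangle^{-1-\beta})$, integrable in $s$. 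The main technical obstacle I anticipate is the bookkeeping of the three scales $(k,\ell,s)$ and the choice of $L_0$, which must simultaneously make the off-resonance integration-by-parts gain beat the $2^{-k}\langle s\rangle^{p_0}$ loss from $\partial\widehat v$ and keep the Taylor remainder integrable; the delicate endpoint is $k\approx -p_0 m/\beta$, where the $\beta$-margin in the $Z$-norm is essential to close the estimate.
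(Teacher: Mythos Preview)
Your treatment of \eqref{mainlemma2conc3} is sound in spirit, though the exponent is slightly off: placing $2n-1$ factors in $L^\infty$ and two in $L^2$ gives, for the worst case $n=2$, a bound $|\widehat{\mathcal{N}_2}(\xi,s)|\lesssim\e_1^5\langle s\rangle^{-3/2+Cp_0}$ rather than $\langle s\rangle^{-2}$; this still integrates over $[2^m,2^{m+1}]$ to $\e_1^5 2^{-m/2+Cp_0m}$, which suffices. The paper takes a different route: it proves $\|\mathcal{R}_{\ge2}\|_{L^2}+\|S\mathcal{R}_{\ge2}\|_{L^2}\lesssim\e_0\langle t\rangle^{-3/2}$ (Lemma \ref{rbounds}) and then upgrades to a pointwise bound via the interpolation inequality \eqref{interp1} applied to the auxiliary function $F$ in \eqref{lmj21}, using the vector-field $S$ to control $\partial_\xi F$. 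Your direct $L^1\to L^\infty$ argument is a legitimate shortcut here.

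There is, however, a genuine gap in your plan for \eqref{mainlemma2conc1}. The space-resonant set $\{\nabla_{\eta}\Phi=0\}$ is \emph{not} $\{P_1,P_2,P_3\}$. Since $\Lambda'(\mu)=\gamma\alpha|\mu|^{\alpha-1}$ is even, the stationarity conditions $|\eta_1|=|\eta_2|=|\xi-\eta_1-\eta_2|$ have, besides the three space-time resonances, the additional solution $(\eta_1,\eta_2)=(\xi/3,\xi/3)$ in the octant $\eta_1,\eta_2,\xi-\eta_1-\eta_2>0$, at which $\Phi=\gamma|\xi|^{\alpha}(3^{1-\alpha}-1)\neq 0$. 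This point lies well inside your reduced range (it corresponds to $k_1,k_2,k_3\approx k-2$), and your asserted lower bound $|\nabla_\eta\Phi|\gtrsim 2^{(\alpha-2)k}2^\ell$ away from the $P_j$ is simply false in its neighborhood; the integration-by-parts-in-$\eta$ argument therefore does not close.

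The paper handles this by inserting, between the frequency reduction and the distance-to-resonance analysis, a decomposition by the signs $(\iota_1,\iota_2,\iota_3)$ of $(\eta_1,\eta_2,\xi-\eta_1-\eta_2)$ (see \eqref{poi41}). On the four components $(+,+,+),(+,-,-),(-,+,-),(-,-,+)$ one has the uniform time-nonresonance $|\Phi|\gtrsim 2^{\alpha k}$ (this is \eqref{poi42}--\eqref{poi43}), and Lemma \ref{lemma3} disposes of them by a normal form, i.e.\ integration by parts in $s$, together with the bound $\|\partial_s\widehat{v_k}\|_{L^2}\lesssim\e_0 2^{-m+p_0m}$ from \eqref{estN'}. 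Only on the three remaining components $(+,+,-),(+,-,+),(-,+,+)$ is the unique stationary point of $\nabla_\eta\Phi$ one of the $P_j$, and there the paper's Lemma \ref{lemma4} carries out essentially the off-resonance/near-resonance decomposition and stationary-phase computation you describe. To fix your argument you must either add this sign splitting and the $s$-integration-by-parts step for the time-nonresonant components, or enlarge your list of localization centers to include $(\xi/3,\xi/3)$ and exploit the oscillation $e^{is\Phi}$ in the time integral near that point.
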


\subsubsection{Proof of \eqref{mainlemma2conc1}}  We start with the cubic bound and consider several cases.

\begin{lemma}\label{lemma1}
The bound \eqref{mainlemma2conc1} holds provided that
\begin{equation}\label{vcond1}
\max (k_1,k_2,k_3)\geq 2m/N_0\qquad\text{ or }\qquad \min(k_1,k_2,k_3)\leq -4m/7.
\end{equation}
\end{lemma}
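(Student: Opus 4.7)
The plan is a direct multilinear bound on $I_{k_1,k_2,k_3}$, together with the observation that, under \eqref{vcond1}, each of the three resonant correction terms on the right-hand side of \eqref{mainlemma2conc1} vanishes identically. Indeed, since $|\xi|\in[2^k,2^{k+1}]$ with $k\in[-10(p_0/\beta)m,5p_0m]$ and, in view of \eqref{zxc1}, $10(p_0/\beta)=10^{-5}\ll 4/7$ and $5p_0\ll 2/N_0=1/10$, any hypothesis in \eqref{vcond1} forces at least one $k_j$ to be separated from $k$ by many units of $\log_2$, so the Littlewood--Paley factor $\widehat{v_{k_j}}(\pm\xi)$ is identically zero in each of the three correction terms. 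It therefore suffices to bound
\begin{equation*}
\Big|\int_{t_1}^{t_2}e^{iL(\xi,s)}I_{k_1,k_2,k_3}(\xi,s)\,ds\Big|\leq 2^m\sup_{s\in[t_1,t_2]}|I_{k_1,k_2,k_3}(\xi,s)|
\end{equation*}
by $\e_1^3 2^{-20N_2p_0m}2^{-p_0K}$, where $K:=\max(|k_1|,|k_2|,|k_3|)$.

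Writing $I_{k_1,k_2,k_3}(\xi,s)=e^{-is\Lambda(\xi)}\mathcal{F}[T_{m_1}(h_{k_1},h_{k_2},h_{k_3})](\xi)$ with $h_{k_i}=e^{is\Lambda}v_{k_i}$, and combining $\|\mathcal{F}g\|_{L^\infty}\leq\|g\|_{L^1}$, Lemma \ref{touse}(iii), and the symbol bound \eqref{nonl7}, we have
\begin{equation*}
|I_{k_1,k_2,k_3}(\xi,s)|\lesssim 2^{k_1+k_2+k_3}2^{(\alpha-1)\max(k_1,k_2,k_3)}\|h_{k_1}\|_{L^{p_1}}\|h_{k_2}\|_{L^{p_2}}\|h_{k_3}\|_{L^{p_3}}
\end{equation*}
for any H\"older triple $1/p_1+1/p_2+1/p_3=1$. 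The optimal choice of triple differs in the two regimes.

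In the high-frequency case $\max(k_1,k_2,k_3)\geq 2m/N_0$, say realized by $k_3$, I would place $h_{k_3}$ in $L^2$ using the Sobolev bound $\|h_{k_3}\|_{L^2}\lesssim\e_1 2^{-N_0\alpha k_3}2^{mp_0}$ from \eqref{boundsfhigh}, and $h_{k_1},h_{k_2}$ in $L^\infty$ using Lemma \ref{Poi2}. The combined powers of $2^{k_3}$ from the symbol and the Sobolev norm produce $2^{\alpha k_3-N_0\alpha k_3}=2^{-(N_0-1)\alpha k_3}$, which with $k_3\geq m/10$ is much stronger than the required $2^{-20N_2p_0m-p_0K}$, since $p_0$ is minuscule. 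The $k_1,k_2$ sums are controlled by combining the $2^{k_i}$ symbol factors with $\|h_{k_i}\|_{L^\infty}\lesssim\e_1 2^{-m/2}(2^{k_i(1/2-\beta)}+2^{N_2\alpha k_i})^{-1}$, whose net contribution is bounded and summable. In the low-frequency case $\min(k_1,k_2,k_3)\leq -4m/7$, say realized by $k_3$, the dispersive $L^\infty$ bound degrades, so I would instead place $h_{k_3}$ in $L^2$ via the basic bound $\|h_{k_3}\|_{L^2}\lesssim\e_1 2^{mp_0}$, extract the symbol factor $2^{k_3}\lesssim 2^{-4m/7}$, and place $h_{k_1},h_{k_2}$ in $L^\infty$. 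The two $L^\infty$ norms contribute $\e_1^2 2^{-m}$, which cancels the $2^m$ time-integration loss, so the net bound is $\e_1^3 2^{mp_0}\cdot 2^{k_3}\lesssim\e_1^3 2^{mp_0-4m/7}$, which handily beats $\e_1^3 2^{-20N_2p_0m-4p_0m/7}$ for $p_0\ll 1$.

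The main obstacle is the low-frequency regime, since the dispersive bound of Lemma \ref{Poi2} is not effective at low frequency. What rescues the estimate is the algebraic gain $2^{k_1+k_2+k_3}$ in the symbol bound \eqref{nonl7}, which is ultimately a consequence of the derivative cancellation $h_x(x)-h_x(x-y)$ in \eqref{Poi5} together with the Taylor expansion \eqref{Nonl1}; this produces an automatic factor $2^{k_{\min}}$ that absorbs the $2^m$ integration loss. Once the resonant corrections are seen to drop out by the frequency-support argument, the rest is a careful bookkeeping of positive and negative powers of $2^{k_i}$ and $2^m$ against the tiny budget controlled by $p_0$ and $20N_2p_0m$.
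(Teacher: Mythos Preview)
Your opening observation---that under \eqref{vcond1} the three resonant correction terms vanish by frequency support---is correct and matches the paper's implicit use of this fact. The overall strategy of bounding $|I_{k_1,k_2,k_3}(\xi,s)|$ via $\|\mathcal{F}g\|_{L^\infty}\le\|g\|_{L^1}$, Lemma~\ref{touse}(iii), and the symbol bound \eqref{nonl7} is also the same as the paper's.

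However, there is a genuine gap in your choice of H\"older exponents. You correctly record the constraint $1/p_1+1/p_2+1/p_3=1$, but then in both the high- and low-frequency cases you place one factor in $L^2$ and the other two in $L^\infty$, which gives $0+0+1/2=1/2\neq 1$. With that choice the multilinear estimate \eqref{mk6.5} only controls the $L^2$ norm of $T_{m_1}(h_{k_1},h_{k_2},h_{k_3})$, not the $L^1$ norm you need to bound $\widehat{G}(\xi)$ pointwise. In particular, your low-frequency heuristic ``the two $L^\infty$ norms contribute $\e_1^2 2^{-m}$, which cancels the $2^m$ time-integration loss'' rests entirely on this invalid triple and cannot be used as written.

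The paper fixes this by a single valid choice, $(p_1,p_2,p_3)=(2,2,\infty)$ with the \emph{largest} frequency placed in $L^\infty$, and then uses two different bounds for that $L^\infty$ factor: the dispersive bound $\|e^{is\Lambda}v_{k_3}\|_{L^\infty}\lesssim\e_1 2^{-m/2}2^{-N_2\alpha k_3^+}$ and the Bernstein--Sobolev bound $\|e^{is\Lambda}v_{k_3}\|_{L^\infty}\lesssim 2^{k_3/2}\|v_{k_3}\|_{L^2}\lesssim\e_1 2^{p_0m}2^{k_3/2-N_0\alpha k_3^+}$. The latter handles the high-frequency case $k_3\ge 2m/N_0$. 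For the low-frequency case, only one $t^{-1/2}$ is available, but the symbol factor $2^{k_{\min}}\le 2^{-4m/7}$ supplies the missing decay, so that $2^{-m/2}\cdot 2^{-4m/7}=2^{-15m/14}$ still beats the $2^m$ loss with room to spare. Your argument can be repaired along these lines, but not with the exponents you propose.
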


\begin{proof}
For $s\approx 2^m$ we estimate, using Lemma \ref{touse} (iii) and \eqref{nonl7},
\begin{equation}\label{poi30}
\begin{split}
\|&I_{k_1,k_2,k_3}(s)\|_{L^\infty}\lesssim \|\mathcal{F}^{-1}\{e^{is\Lambda}I_{k_1,k_2,k_3}(s)\}\|_{L^1}\\
&\lesssim 2^{\min(k_1,k_2,k_3)}2^{3\max(k_1,k_2,k_3,0)}\|e^{is\Lambda}v_{k_1}(s)\|_{L^{p_1}}\|e^{is\Lambda}v_{k_2}(s)\|_{L^{p_2}}\|e^{is\Lambda}v_{k_3}(s)\|_{L^{p_3}},
\end{split}
\end{equation}
for any choice of $p_1,p_2,p_3\in\{2,\infty\}$ satisfying $1/p_1+1/p_2+1/p_3=1$. Assume, without loss of generality, that $k_1\leq k_2\leq k_3$. We set $p_3=\infty$, $p_1=p_2=2$. Using \eqref{boundsflow}--\eqref{boundsfhigh} and noticing that we may assume $k_3\geq k-10$, the right-hand side of \eqref{poi30} is dominated by
\begin{equation*}
C\e_1^22^{2p_0m}2^{k_1}2^{3k_3^+}\|e^{is\Lambda}v_{k_3}(s)\|_{L^{\infty}}\lesssim \e_1^32^{11(p_0/\beta)m}2^{k_1}2^{3k_3^+}\min(2^{-(N_0-1)k_3^+},2^{-m/2}2^{-N_2k_3^+}).
\end{equation*}
Assuming that \eqref{vcond1} holds, it follows that
\begin{equation*}
\|I_{k_1,k_2,k_3}(s)\|_{L^\infty}\lesssim \e_1^32^{-1.01m}2^{-2p_0k_3^+}2^{p_0k_1}.
\end{equation*}
The contribution of the other terms, which contain the symbol $m_1$, can be estimated easily using the last bounds in \eqref{boundsflow}--\eqref{boundsfhigh}. This suffices to prove the desired conclusion \eqref{mainlemma2conc1} in this case.
\end{proof}

\begin{lemma}\label{lemma2}
The bound \eqref{mainlemma2conc1} holds provided that
\begin{align}
\label{lemma2cond}
\begin{split}
& k_1,k_2,k_3\in[-4m/7,2m/N_0],
\\
& \max (|k_1-k_2|,|k_1 - k_3| , |k_2-k_3|) \geq 8.
\end{split}
\end{align}
\end{lemma}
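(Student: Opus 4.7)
The plan is to close the bound by a single integration by parts in $s$ (normal form), enabled by the fact that the hypothesis \eqref{lemma2cond} keeps us away from the space-time resonant set $\{(\xi,\pm\xi,\mp\xi),(\xi,\xi,\xi)\}$. After symmetrizing in $(\eta_1,\eta_2)$ and relabelling so that $k_1\le k_2\le k_3$, the hypothesis forces either $k_3-k_2\ge 8$ or $k_2-k_1\ge 8$. A convolution-constraint analysis then shows that the three resonant correction terms in \eqref{mainlemma2conc1} vanish identically on the support of the $P_{k_i}$-localized integrand, because each of them requires $|\xi|\approx 2^{k_i}$ for all $i\in\{1,2,3\}$ simultaneously, which is incompatible with the separation. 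Thus it suffices to bound
\begin{equation*}
\Big|\int_{t_1}^{t_2} e^{iL(\xi,s)}\,I_{k_1,k_2,k_3}(\xi,s)\,ds\Big|\lesssim \e_1^3 \, 2^{-20N_2p_0 m}\,2^{-p_0 k_{\mathrm{max}}}.
\end{equation*}

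Next I would analyze the phase $\Phi$. Writing $\mu=\xi-\eta_1-\eta_2$ and Taylor expanding $\Lambda(\xi)=\Lambda(\mu+\eta_1+\eta_2)$ around $\mu$, one obtains $\Phi = \Lambda(\eta_1)+\Lambda(\eta_2)-\Lambda'(\mu)(\eta_1+\eta_2)+O(|\mu|^{\al-2}(\eta_1+\eta_2)^2)$. A short case analysis based on the ordering $k_1\le k_2\le k_3$ yields a lower bound of the form $|\Phi|\gtrsim 2^{(\al-1)k_{\mathrm{max}}}\cdot 2^{k_{\mathrm{sec}}}$ (with $k_{\mathrm{sec}}$ the second-largest of the $k_i$'s), valid on the bulk of the support of the integrand. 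The degenerate sub-region in which $|\eta_1+\eta_2|$ is anomalously small (possible only when $k_1=k_2$) will be isolated by a further dyadic decomposition in $|\eta_1+\eta_2|$ and treated by an auxiliary spatial integration by parts in $\eta_1$, exploiting the transverse non-degeneracy $|\pa_{\eta_1}\Phi-\pa_{\eta_2}\Phi|\approx 2^{(\al-1)k_{\mathrm{max}}}$ in that regime. On each dyadic piece the normal-form multiplier $M:=m_1\chi/\Phi$ then admits a favorable $S^\infty$ estimate via Lemma \ref{touse} together with the symbol bounds \eqref{nonl7}--\eqref{nonl7.5} and the product rule \eqref{al8}.

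Finally, I would apply the identity $e^{is\Phi}=\tfrac{1}{i\Phi}\pa_s e^{is\Phi}$ and integrate by parts in $s$, producing three types of contributions. The boundary terms at $s\in\{t_1,t_2\}$ are bounded via Lemma \ref{touse}(iii) by $\|M\|_{S^\infty}$ times a trilinear product of the form $\|v_{k_{\mathrm{min}}}\|_{L^2}\,\|e^{is\Lambda}v_{k_{\mathrm{sec}}}\|_{L^\infty}\,\|e^{is\Lambda}v_{k_{\mathrm{max}}}\|_{L^\infty}$, which by \eqref{boundsflow}--\eqref{boundsfhigh} yields roughly a gain of $s^{-1}$ (far more than the tiny $2^{-20N_2p_0m}$ that is needed, since the $k_i$ all satisfy $|k_i|\lesssim m/N_0$). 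The terms in which $\pa_s$ falls on $e^{iL(\xi,s)}$ pick up an extra factor $\widetilde{c}(\xi)|\widehat v(\xi,s)|^2/(s+1)$, which is easily absorbed using the $Z$-norm bound in \eqref{zxc4}. The terms in which $\pa_s$ falls on one of the profiles $\widehat{v_{k_i}}$ reduce, via \eqref{nf48}, to quintic-or-higher integrals of the same type controlled in \eqref{mainlemma2conc3} plus a further cubic remainder from $\mathcal{N}_1$ that can be handled by iterating the same normal-form argument once more, since an extra $1/\Phi$ factor is now available. The main obstacle is the uniform $S^\infty$ bound on $m_1/\Phi$ in the degenerate sub-region $\eta_1+\eta_2\approx 0$ when $k_1=k_2$: there $\Phi$ itself vanishes and the crude estimate $|\Phi|\gtrsim 2^{(\al-1)k_{\mathrm{max}}}2^{k_{\mathrm{sec}}}$ fails, so one must organize the further dyadic decomposition in $|\eta_1+\eta_2|$ carefully and trade powers between the auxiliary spatial integration by parts and the normal form to close the estimate with the required small loss $2^{-p_0 k_{\mathrm{max}}}$.
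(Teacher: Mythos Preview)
Your approach is genuinely different from the paper's and substantially more complicated. The paper does \emph{not} integrate by parts in $s$ here; instead it proves the stronger pointwise bound $|I_{k_1,k_2,k_3}(\xi,s)|\lesssim \e_1^3 2^{-m-200p_0m}$ by a single integration by parts in $\eta_2$. After relabelling so that $|k_2-k_3|\ge 8$, one has
\[
|\partial_{\eta_2}\Phi(\xi,\eta_1,\eta_2)|=|\Lambda'(\eta_2)-\Lambda'(\xi-\eta_1-\eta_2)|\gtrsim 2^{(\alpha-1)\max(k_2,k_3)}
\]
uniformly on the support of the integrand, because $|\eta_2|$ and $|\xi-\eta_1-\eta_2|$ live at separated dyadic scales. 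One spatial IBP and a trilinear $L^2\times L^2\times L^\infty$ estimate then closes the bound. No normal form, no iteration, no degenerate sub-region.

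Your proposal has a concrete error. In your degenerate region ($k_1=k_2$ and $\eta_1+\eta_2\approx 0$) you claim the transverse non-degeneracy $|\partial_{\eta_1}\Phi-\partial_{\eta_2}\Phi|\approx 2^{(\alpha-1)k_{\max}}$. But $\partial_{\eta_1}\Phi-\partial_{\eta_2}\Phi=\Lambda'(\eta_1)-\Lambda'(\eta_2)$, and since $\Lambda'(\mu)=\gamma\alpha|\mu|^{\alpha-1}$ is \emph{even}, this difference vanishes identically at $\eta_1=-\eta_2$; it is $O(2^{(\alpha-2)k_1}|\eta_1+\eta_2|)$, not $\approx 2^{(\alpha-1)k_{\max}}$. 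What is actually non-degenerate there is $\partial_{\eta_j}\Phi=\Lambda'(\eta_j)-\Lambda'(\eta_3)$ itself (since $k_3\ge k_1+8$), which is precisely the quantity the paper uses from the outset. So your temporal IBP buys nothing: the phase $\Phi$ genuinely vanishes on the set $\{\eta_1+\eta_2=0\}$ (a time-resonant but not space-time-resonant set), and the only way you can handle that region is by the spatial IBP that the paper uses globally. Moreover, the $\partial_s\widehat{v_{k_i}}$ terms arising from your normal form are handled most cleanly by the crude bound $\|\partial_s\widehat v\|_{L^2}\lesssim\e_0\langle s\rangle^{-1+p_0}$ (see \eqref{estN'} and Lemma~\ref{lemma3}); your proposed ``iteration'' is both unnecessary and not covered by \eqref{mainlemma2conc3}, which concerns a structurally different integral.
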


\begin{proof}
In this case we will show the stronger bound
\begin{equation}
\label{lemma2conc}
|I_{k_1,k_2,k_3}(\xi,s)|\lesssim \e_1^3 2^{-m} 2^{-200p_0m}.
\end{equation}
Without loss of generality (using changes of variables) we may assume that $|k_2-k_3| \geq 8$, $\max(k_2,k_3)\geq k-20$. Then
\begin{align}
\label{lowerbound1}
 |(\partial_{\eta_2}\Phi)(\xi,\eta_1,\eta_2)| = |\Lambda'(\eta_2) - \Lambda'(\xi-\eta_1-\eta_2)|\gtrsim 2^{(\alpha-1)\max(k_2,k_3)}
\end{align}
in the support of the integral. Therefore we can integrate by parts in $\eta_2$ in the integral expression \eqref{rak0} for $I_{k_1,k_2,k_3}$.
This gives
\begin{align*}
|I_{k_1,k_2,k_3}(\xi,s)| & \lesssim  |K_1(\xi,s)| + |K_2(\xi,s)| + |K_3(\xi,s)|,
\end{align*}
where
\begin{align}
\label{IBPeta}
\begin{split}
K_1(\xi) & := \int_{\mathbb{R}\times\mathbb{R}} e^{is\Phi(\xi,\eta_1,\eta_2)}
   n_1(\xi,\eta_1,\eta_2)
  \what{v_{k_1}}(\eta_1) (\partial\what{v_{k_2}})(\eta_2) \what{v_{k_3}}(\xi-\eta_1-\eta_2) \,d\eta_1 d\eta_2,
\\
K_2(\xi) & := \int_{\mathbb{R}\times\mathbb{R}} e^{is\Phi(\xi,\eta_1,\eta_2)}
   n_1(\xi,\eta_1,\eta_2)
  \what{v_{k_1}}(\eta_1) \what{v_{k_2}}(\eta_2) (\partial\what{v_{k_3}})(\xi-\eta_1-\eta_2) \,d\eta_1 d\eta_2,
\\
K_3(\xi) & := \int_{\mathbb{R}\times\mathbb{R}} e^{is\Phi(\xi,\eta_1,\eta_2)}
   (\partial_{\eta_2}n_1)(\xi,\eta_1,\eta_2)
  \what{v_{k_1}}(\eta_1) \what{v_{k_2}}(\eta_2) \what{v_{k_3}}(\xi-\eta_1-\eta_2) \,d\eta_1 d\eta_2,
\end{split}
\end{align}
and
\begin{align}
\label{symIBPeta}
 n_1(\xi,\eta_1,\eta_2) := \frac{m_1(\eta_1,\eta_2,\xi-\eta_1-\eta_2)}{s(\Lambda'(\eta_2) - \Lambda'(\xi-\eta_1-\eta_2)) } \varphi'_{k_1}(\eta_1)\varphi'_{k_2}(\eta_2)\varphi'_{k_3}(\xi-\eta_1-\eta_2).
\end{align}

Using \eqref{nonl7} and \eqref{lowerbound1} it is easy to see that
\begin{align}
\label{symIBPetaest1}
{\| n_1\|}_{S^\infty} \lesssim 2^{-m} 2^{\min(k_1,k_2,k_3)}2^{3\max(k_1,k_2,k_3,0)}2^{-(\alpha-1)\max(k_2,k_3)}.
\end{align}
We can estimate $K_1$ and $K_2$ as in \eqref{poi30}, using \eqref{boundsflow}-\eqref{boundsfhigh} and the bound $2^{-\max(k_2,k_3)}\lesssim 2^{10(p_0/\beta)m}$,
\begin{equation*}
\begin{split}
|K_1(\xi)| &\lesssim {\| n_1\|}_{S^\infty} 
  {\|v_{k_1}\|}_{L^2}  {\| \partial\what{v_{k_2}}\|}_{L^2}{\| e^{is\Lambda} v_{k_3}\|}_{L^\infty}
\\
& \lesssim 2^{-m} 2^{\min(k_1,k_2,k_3)}2^{3\max(k_1,k_2,k_3,0)}2^{10(p_0/\beta)m} \cdot \e_12^{p_0m}\cdot 
  \e_1 2^{-k_2} 2^{p_0m} \cdot \e_1 2^{-m/2}2^{-k_3}
\\
& \lesssim \e_1^3 2^{-1.01m}.
\end{split}
\end{equation*}
A similar estimate shows that $|K_2(\xi)|\lesssim \e_1^3 2^{-1.01m}$. Moreover, using also \eqref{nonl7.5},
\begin{equation*}
{\| \partial_{\eta_2}n_1\|}_{S^\infty} \lesssim 2^{-m} 2^{3\max(k_1,k_2,k_3,0)}2^{-(\alpha-1)\max(k_2,k_3)},
\end{equation*}
and $|K_3(\xi)|$ can be bounded similarly. This completes the proof of \eqref{lemma2conc} and the lemma.
\end{proof}

It remains to consider the case
\begin{align}
\label{poi40}
\begin{split}
&k \in [-10(p_0/\beta)m, 5p_0m],\qquad k_1,k_2,k_3\in[k-20,2m/N_0],\\
& \max (|k_1-k_2|,|k_1 - k_3| , |k_2-k_3|) \leq 7.
\end{split}
\end{align}
Without loss of generality, in proving \eqref{mainlemma2conc1} we may assume that $\xi>0$, $\xi\in[2^k,2^{k+1}]$. We decompose the integrals $I_{k_1,k_2,k_3}$ as
\begin{equation}\label{poi41}
\begin{split}
&I_{k_1,k_2,k_3}=\sum_{\iota_1,\iota_2,\iota_3\in\{+,-\}}I_{k_1,k_2,k_3}^{\iota_1,\iota_2,\iota_3},\\
&I_{k_1,k_2,k_3}^{\iota_1,\iota_2,\iota_3}(\xi):=\int_{\R\times\R}m_1(\eta_1,\eta_2,\xi-\eta_1-\eta_2)e^{it\Phi(\xi,\eta_1,\eta_2)}\widehat{v_{k_1}^{\iota_1}}(\eta_1)\widehat{v_{k_2}^{\iota_2}}(\eta_2)\widehat{v_{k_3}^{\iota_3}}(\xi-\eta_1-\eta_2)\,d\eta_1 d\eta_2,
\end{split}
\end{equation}
where $\widehat{v_l^\iota}(\mu):=\widehat{v_l}(\mu)\mathbf{1}_{\iota}(\mu)$, $1_{+}:=\mathbf{1}_{[0,\infty)}$, $1_{-}:=\mathbf{1}_{(-\infty,0]}$. Notice that $I_{k_1,k_2,k_3}^{-,-,-}(\xi)=0$. We estimate the remaining contributions in the next two lemmas.

\begin{lemma}\label{lemma3}
With the hypothesis of Lemma \ref{mainlemma2}, and assuming that \eqref{poi40} holds, we have
\begin{equation*}
\Big|\int_{t_1}^{t_2} e^{iL(\xi,s)} I_{k_1,k_2,k_3}^{\iota_1,\iota_2,\iota_3}(\xi,s)\,ds\Big|\lesssim \e_1^32^{-m/100},
\end{equation*}
for $(\iota_1,\iota_2,\iota_3)\in\{(+,+,+),(+,-,-), (-,+,-), (-,-,+)\}$.
\end{lemma}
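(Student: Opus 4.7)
The plan is to exploit the absence of spacetime resonances on the support of the integrand for each of the four listed sign combinations, and then integrate by parts in the time variable $s$. The spacetime resonances of $\Phi$ occur only at $|\eta_1|=|\eta_2|=|\eta_3|$ (since $\Lambda'(\mu)=\gamma\al|\mu|^{\al-1}$), and the constraint $\eta_1+\eta_2+\eta_3=\xi>0$ together with the sign pattern shows that the three resonant sign combinations are precisely $(-,+,+),(+,-,+),(+,+,-)$, which are exactly those subtracted off in \eqref{mainlemma2conc1}. Among the four remaining sign combinations, either there is no critical point of $\Phi$ at all (as in $(+,-,-)$ and its permutations, where the critical-point condition would force $\eta_1=-\xi$ contradicting $\xi>0$), or the unique critical point $(\xi/3,\xi/3,\xi/3)$ of $(+,+,+)$ satisfies $\Phi=\gamma\xi^\al(3^{1-\al}-1)\neq 0$.

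My first step would be to quantify this non-resonance, proving the lower bound
\[
|\Phi(\xi,\eta_1,\eta_2)|\gtrsim 2^{\al k_{\mathrm{max}}},\qquad k_{\mathrm{max}}:=\max(k_1,k_2,k_3),
\]
on the support of $\widehat{v_{k_1}^{\iota_1}}\widehat{v_{k_2}^{\iota_2}}\widehat{v_{k_3}^{\iota_3}}$. For $(+,+,+)$ the constraint $\eta_j>0$ and $\eta_1+\eta_2+\eta_3=\xi$ forces $\eta_j\leq\xi$ and (since $|k_i-k_j|\leq 7$) $k_j\approx k$, so strict superadditivity of $x\mapsto x^\al$ on $(0,\infty)$ gives $|\Phi|=\Lambda(\xi)-\sum_j\Lambda(\eta_j)\gtrsim 2^{\al k}\approx 2^{\al k_{\mathrm{max}}}$. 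For $(+,-,-)$ one has $\eta_1=\xi+|\eta_2|+|\eta_3|$, and the same convexity combined with a short case analysis (according to whether $k_{\mathrm{max}}\gg k$, which yields $|\Phi|\approx(2^\al-2)2^{\al k_{\mathrm{max}}}$, or $k_{\mathrm{max}}\approx k$, which yields $|\Phi|\approx(3^\al-3)2^{\al k}$) produces the desired bound; the patterns $(-,+,-)$ and $(-,-,+)$ are symmetric.

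The second step is to integrate by parts in $s$ via $e^{is\Phi}=(i\Phi)^{-1}\partial_s(e^{is\Phi})$, which rewrites the time integral as the sum of (i) boundary terms at $s=t_1,t_2$, (ii) a term involving $\partial_s\widehat{v_{k_j}^{\iota_j}}$ for $j=1,2,3$, and (iii) a term proportional to $L'(\xi,s)=\widetilde{c}(\xi)(s+1)^{-1}|\widehat{v}(\xi,s)|^2$ coming from $\partial_s e^{iL(\xi,s)}$. Each piece now carries the factor $|\Phi|^{-1}\lesssim 2^{-\al k_{\mathrm{max}}}$. Applying Lemma \ref{touse}(iii) with the multiplier bound \eqref{nonl7} and placing the three profiles in $L^2,L^2,L^\infty$ (using the dispersive bounds \eqref{boundsflow}--\eqref{boundsfhigh}, in particular the $s^{-1/2}$ decay of $\|e^{is\Lambda}v_{k_j}\|_{L^\infty}$), the boundary terms are dominated by $\e_1^3 2^{-m/4}$ in the range $k_j\in[k-20,2m/N_0]$. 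For (ii) the replacement of one profile by $\partial_s\widehat{v_{k_j}^{\iota_j}}=e^{-is\Lambda}\widehat{\mathcal{N}}$, controlled by $\|\partial_tv\|_{L^2}\lesssim\e_0\langle s\rangle^{-1+p_0}$ from \eqref{estN'}, gains an extra $2^{-m(1-p_0)}$ that more than compensates for the $s$-integration on an interval of length $\approx 2^m$; for (iii), $|\widetilde{c}(\xi)|\lesssim 2^{(2-\al)k}|m_1(\pm\xi,\pm\xi,\pm\xi)|\lesssim 2^{2k}$ and the $Z$-norm control of $\widehat{v}$ give $|L'(\xi,s)|\lesssim\e_1^2 2^k\langle s\rangle^{-1}$. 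In each case the final bound is comfortably better than $\e_1^3 2^{-m/100}$.

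The main obstacle in my view is the phase lower bound of the first step in the subregime $k_{\mathrm{max}}\approx k$, where the three frequencies $|\eta_j|$ and $\xi$ are all of comparable size and $|\Phi|$ is obtained only through the quantitative strict superadditivity constants $3^\al-3>0$ (for $(+,+,+)$) or $2^\al-2>0$ (for $(+,-,-)$), and one must carefully track how these constants depend on $\al\in(1,2)$ so that they do not degenerate. A secondary technical point is that the decomposition \eqref{poi41} introduces sharp sign cutoffs $\mathbf{1}_{\pm}$, but since $|\eta_j|\sim 2^{k_j}$ stays bounded away from $0$ on the support, each factor $\mathbf{1}_{\iota_j}(\eta_j)\varphi_{k_j}(\eta_j)$ acts as a smooth bump and is absorbed into the $S^\infty$-estimates of Lemma \ref{touse} without loss.
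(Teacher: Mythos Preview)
Your approach is essentially the same as the paper's: prove a lower bound $|\Phi|\gtrsim 2^{\alpha k_{\max}}$ on the support of the integrand and then integrate by parts in $s$. The paper treats $(-,-,+)$ explicitly, obtaining $|\Phi|\approx 2^{\alpha k_3}$ from the elementary inequality $(a+b+c)^\alpha-a^\alpha-b^\alpha-c^\alpha\gtrsim ba^{\alpha-1}$ for $a\geq b\geq c>0$, and claims the other three cases are similar.

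There is one technical point you gloss over. After integrating by parts in $s$ the multiplier becomes $m_1/\Phi$, not $m_1$, and you cannot simply invoke \eqref{nonl7} and pull out $|\Phi|^{-1}$ as a scalar: Lemma~\ref{touse}(iii) requires an $S^\infty$ bound on the full symbol $m_1/\Phi$. This can be done directly---the lower bound on $|\Phi|$ together with the derivative bounds $|\partial^a\Phi|\lesssim 2^{(\alpha-|a|)k_{\max}}$ show that $1/\Phi$, appropriately localized, lies in $S^\infty$ with norm $\lesssim 2^{-\alpha k_{\max}}$, and one then uses \eqref{al8}---but you have not said so. The paper instead sidesteps the issue with a clean device: since $|\Phi|\approx 2^{\alpha k_3}$, it inserts a harmless cutoff and writes
\[
\frac{\varphi_{[-C,C]}(2^{-\alpha k_3}\Phi)}{\Phi}=2^{-\alpha k_3}\int_{\mathbb{R}}\psi(\lambda)\,e^{-i\lambda 2^{-\alpha k_3}\Phi}\,d\lambda,
\]
which converts the factor $1/\Phi$ into a shift $s\mapsto s-\lambda 2^{-\alpha k_3}$ in the oscillatory phase. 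The symbol is then just $m_1$, \eqref{nonl7} applies directly, and the dispersive bounds in \eqref{boundsflow}--\eqref{boundsfhigh} are uniform over $|\lambda|\leq 2^{m/4}$, the tail in $\lambda$ being negligible by rapid decay of $\psi$.

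A minor slip: since $m_1$ is homogeneous of degree $2+\alpha$, one has $|\widetilde{c}(\xi)|\lesssim|\xi|^{2-\alpha}\cdot|\xi|^{2+\alpha}=|\xi|^{4}$ rather than $|\xi|^2$; but since $|k|\lesssim (p_0/\beta)m$ this does not affect the final estimate.
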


\begin{proof} We will only prove the bound for the integral $I_{k_1,k_2,k_3}^{-,-,+}$, since the other bounds are similar. We integrate by parts in $s$. The main observation is that
\begin{equation}\label{poi42}
(a+b+c)^\alpha-a^\alpha-b^\alpha-c^\alpha\geq b[(a+b+c)^{\alpha-1}-b^{\alpha-1}]\gtrsim ba^{\alpha-1},
\end{equation}
if $a\geq b\geq c\in(0,\infty)$. Therefore
\begin{equation}\label{poi43}
|\Phi(\xi,\eta_1,\eta_2)|\approx 2^{\alpha k_3}
\end{equation}
in the support of the integral defining $I_{k_1,k_2,k_3}^{-,-,+}(\xi,s)$. Due to this lower bound we can integrate by parts in $s$ to obtain
\begin{align}
\label{IBPs}
\begin{split}
\Big| \int_{t_1}^{t_2} & e^{iL(\xi,s)} I_{k_1,k_2,k_3}^{-,-,+}(\xi,s) \, ds \Big|
  \lesssim  | N_1 (\xi,t_1) | + | N_1 (\xi,t_2) |
  \\ & + \int_{t_1}^{t_2} | N_2(\xi,s)|+ | N_3(\xi,s) | + | N_4(\xi,s)| + |(\partial_sL)(\xi,s)|| N_1(\xi,s) | \, ds,
\end{split}
\end{align}
where
\begin{align*}
\begin{split}
& N_1(\xi) := \int_{\R\times\R} e^{is\Phi(\xi,\eta_1,\eta_2)} \frac{m_1(\eta_1,\eta_2,\xi-\eta_1-\eta_2)}{\Phi(\xi,\eta_1,\eta_2)}
  \what{v_{k_1}^{-}}(\eta_1) \what{v_{k_2}^{-}}(\eta_2) \what{v_{k_3}^{+}}(\xi-\eta_1-\eta_2) \,d\eta_1 d\eta_2 ,
\\
& N_2(\xi) := \int_{\R\times\R} e^{is\Phi(\xi,\eta_1,\eta_2)} \frac{m_1(\eta_1,\eta_2,\xi-\eta_1-\eta_2)}{\Phi(\xi,\eta_1,\eta_2)}
  (\partial_s\what{v_{k_1}^{-}})(\eta_1) \what{v_{k_2}^{-}}(\eta_2) \what{v_{k_3}^{+}}(\xi-\eta_1-\eta_2) \,d\eta_1 d\eta_2 ,
\\
& N_3(\xi) := \int_{\R\times\R} e^{is\Phi(\xi,\eta_1,\eta_2)} \frac{m_1(\eta_1,\eta_2,\xi-\eta_1-\eta_2)}{\Phi(\xi,\eta_1,\eta_2)}
  \what{v_{k_1}^{-}}(\eta_1) (\partial_s\what{v_{k_2}^{-}})(\eta_2) \what{v_{k_3}^{+}}(\xi-\eta_1-\eta_2) \,d\eta_1 d\eta_2 ,
\\
& N_4(\xi) := \int_{\R\times\R} e^{is\Phi(\xi,\eta_1,\eta_2)} \frac{m_1(\eta_1,\eta_2,\xi-\eta_1-\eta_2)}{\Phi(\xi,\eta_1,\eta_2)}
  \what{v_{k_1}^{-}}(\eta_1) \what{v_{k_2}^{-}}(\eta_2) (\partial_s\what{v_{k_3}^{+}})(\xi-\eta_1-\eta_2) \,d\eta_1 d\eta_2 .
\end{split}
\end{align*}

In view of \eqref{poi43} we may insert a factor of $\varphi_{[-C,C]}(2^{-\alpha k_3}\Phi(\xi,\eta_1,\eta_2))$ in the integrals above, for some constant $C=C_\alpha$. Let $\psi$ denote the inverse Fourier transform of $\varphi_{[-C,C]}(x)/x$, so
\begin{equation}\label{poi45}
\frac{\varphi_{[-C,C]}(2^{-\alpha k_3}\Phi(\xi,\eta_1,\eta_2))}{\Phi(\xi,\eta_1,\eta_2)}=2^{-\alpha k_3}\int_{\mathbb{R}}\psi(\lambda)e^{-i\lambda 2^{-\alpha k_3}\Phi(\xi,\eta_1,\eta_2)}\,d\lambda.
\end{equation}
It follows that
\begin{equation*}
\begin{split}
N_1(\xi) = 2^{-\alpha k_3}\int_{\mathbb{R}}\psi(\lambda)\int_{\R\times\R} &e^{i(s-\lambda 2^{-\alpha k_3})\Phi(\xi,\eta_1,\eta_2)} m_1(\eta_1,\eta_2,\xi-\eta_1-\eta_2)\\
&\times\what{v_{k_1}^{-}}(\eta_1) \what{v_{k_2}^{-}}(\eta_2) \what{v_{k_3}^{+}}(\xi-\eta_1-\eta_2) \,d\eta_1 d\eta_2.
\end{split}
\end{equation*}
We use the $L^2\times L^2\times L^\infty$ estimate, as in \eqref{poi30}, for $|\lambda|\leq 2^{m/4}$ and the rapid decay of the function $\psi$ for $|\lambda|\geq 2^{m/4}$. Recalling also the constraints \eqref{poi40}, it follows that, for any $s\in[t_1,t_2]$,
\begin{equation*}
|N_1(\xi,s)|\lesssim \e_1^32^{-m/4}.
\end{equation*}
Similar estimates, using also the $L^2$ bounds $\|\partial_s \widehat{v_k}\|_{L^2}\lesssim \e_02^{-m+p_0m}$, see \eqref{estN'}, show that
\begin{equation*}
|N_2(\xi,s)|+|N_3(\xi,s)|+|N_4(\xi,s)|\lesssim \e_1^32^{-5m/4}.
\end{equation*}
Finally, using the definition \eqref{nf50},
\begin{equation*}
|(\partial_sL)(\xi,s)|\lesssim 2^{-m+100(p_0/\beta)m}.
\end{equation*}
The desired bound follows from \eqref{IBPs}.
\end{proof}

\begin{lemma}\label{lemma4}
With the hypothesis of Lemma \ref{mainlemma2}, and assuming that \eqref{poi40} holds, we have
\begin{equation}\label{poi50}
\begin{split}
&\int_{t_1}^{t_2}\Big|I_{k_1,k_2,k_3}^{+,+,-}(\xi,s)-\mathcal{K}_\alpha|\xi|^{2-\alpha}\frac{m_1(\xi,\xi,-\xi)\widehat{v_{k_1}}(\xi,s)\widehat{v_{k_2}}(\xi,s)\widehat{v_{k_3}}(-\xi,s)}{s+1}\Big|\,ds\lesssim \e_1^32^{-m/100},\\
&\int_{t_1}^{t_2}\Big|I_{k_1,k_2,k_3}^{+,-,+}(\xi,s)-\mathcal{K}_\alpha|\xi|^{2-\alpha}\frac{m_1(\xi,-\xi,\xi)\widehat{v_{k_1}}(\xi,s)\widehat{v_{k_2}}(-\xi,s)\widehat{v_{k_3}}(\xi,s)}{s+1}\Big|\,ds\lesssim \e_1^32^{-m/100},\\
&\int_{t_1}^{t_2}\Big|I_{k_1,k_2,k_3}^{-,+,+}(\xi,s)-\mathcal{K}_\alpha|\xi|^{2-\alpha}\frac{m_1(-\xi,\xi,\xi)\widehat{v_{k_1}}(-\xi,s)\widehat{v_{k_2}}(\xi,s)\widehat{v_{k_3}}(\xi,s)}{s+1}\Big|\,ds\lesssim \e_1^32^{-m/100}.
\end{split}
\end{equation}
\end{lemma}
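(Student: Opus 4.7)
The plan is a stationary-phase analysis at the unique space-time resonance inside each of the three integrals. The three estimates in \eqref{poi50} are structurally identical: by the changes of variables $(\eta_1,\eta_2)\mapsto(\eta_1,\xi-\eta_1-\eta_2)$ and $(\eta_1,\eta_2)\mapsto(\xi-\eta_1-\eta_2,\eta_2)$, which leave $e^{is\Phi}$ invariant and which permute the three $\widehat{v}$ factors, together with the full symmetry of $m_1$ under permutation of its arguments (Lemma \ref{mbounds}), it suffices to treat $I^{+,+,-}$; the same proof produces the other two subtracted terms with the stationary point replaced by $(\xi,-\xi,\xi)$ and $(-\xi,\xi,\xi)$ respectively. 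Set $\eta_3:=\xi-\eta_1-\eta_2$ and restrict to $\eta_1,\eta_2>0$, $\eta_3<0$. Since $\Lambda'(\eta)=\gamma\al|\eta|^{\al-1}$ is even and strictly monotone on each half-line, the stationary equations $\Lambda'(\eta_j)=\Lambda'(\eta_3)$ for $j=1,2$ force $\eta_1=\eta_2=-\eta_3$, i.e.\ $(\eta_1,\eta_2,\eta_3)=(\xi,\xi,-\xi)$; at this point $\Phi=0$, and since $\Lambda''$ is odd one finds
\begin{equation*}
\partial^2_{\eta_j}\Phi=\Lambda''(\xi)+\Lambda''(-\xi)=0,\qquad \partial_{\eta_1}\partial_{\eta_2}\Phi=\Lambda''(-\xi)=-\gamma\al(\al-1)|\xi|^{\al-2},
\end{equation*}
so the Hessian has zero diagonal, determinant $-[\gamma\al(\al-1)|\xi|^{\al-2}]^2$, and signature $0$.

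I would then insert cutoffs $\varphi_{\leq j}(\eta_1-\xi)\varphi_{\leq j}(\eta_2-\xi)$ with $2^j$ slightly larger than the natural stationary-phase scale $(s|\xi|^{\al-2})^{-1/2}$ and split $I^{+,+,-}=I_{\mathrm{near}}+I_{\mathrm{far}}$. In the far region at least one of $|\partial_{\eta_1}\Phi|,|\partial_{\eta_2}\Phi|$ is $\gtrsim|\xi|^{\al-2}2^j$, and one integration by parts in that variable (as in Lemma \ref{lemma2}, with the resulting symbols controlled via \eqref{nonl7}--\eqref{nonl7.5} and the new cutoffs, and an $L^2\times L^2\times L^\infty$ trilinear estimate using \eqref{boundsflow}--\eqref{boundsfhigh}) gains a power $2^{-\delta m}$ over the trivial $\varepsilon_1^3 s^{-1}$ bound. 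In the near region I freeze the profiles and the symbol at the critical point: writing
\begin{equation*}
\widehat{v_{k_i}}(\eta_i)=\widehat{v_{k_i}}(\pm\xi)+\int_{\pm\xi}^{\eta_i}\partial\widehat{v_{k_i}}(\tau)\,d\tau
\end{equation*}
(with the appropriate sign) and analogously for $m_1$ via \eqref{nonl7.5}, the ``difference'' pieces are estimated in $L^2\times L^\infty\times L^\infty$ using $\|\partial\widehat{v_{k_i}}\|_{L^2}\lesssim\varepsilon_1 2^{p_0 m}$ (from the $x\partial_x v$ bound in \eqref{zxc4}), the $L^\infty$ control $|\widehat{v_{k_i}}(\pm\xi)|\lesssim\varepsilon_1 2^{-k(1/2+\beta)}$ from the $Z$-norm, and the $L^2$-size $2^{j/2}$ of the localization interval; these produce an acceptable $O(\varepsilon_1^3 s^{-1-\delta})$ error.

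The leading near-region term reduces, after substituting $\eta_i=\xi+\mu_i$ and Taylor-expanding $\Phi$ to quadratic order (with a cubic remainder of size $|\mu|^3|\xi|^{\al-3}$ absorbed by the choice of $j$), to the Fresnel integral
\begin{equation*}
\int_{\R^2}e^{-is\gamma\al(\al-1)|\xi|^{\al-2}\mu_1\mu_2}\varphi_{\leq j}(\mu_1)\varphi_{\leq j}(\mu_2)\,d\mu_1\,d\mu_2=\frac{2\pi}{s\gamma\al(\al-1)|\xi|^{\al-2}}+O(s^{-1-\delta}),
\end{equation*}
which, multiplied by $i\cdot m_1(\xi,\xi,-\xi)\widehat{v_{k_1}}(\xi)\widehat{v_{k_2}}(\xi)\widehat{v_{k_3}}(-\xi)$ and using $\mathcal{K}_\al=2\pi/(\gamma\al(\al-1))$, produces exactly the first subtracted term in \eqref{poi50} (the discrepancy between $1/s$ and $1/(s+1)$ is of order $s^{-2}$ and absorbed into the remainder). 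Integrating the $O(\varepsilon_1^3 s^{-1-\delta})$ remainder over $[t_1,t_2]\subseteq[2^m,2^{m+1}]$ yields the target bound $\varepsilon_1^3 2^{-m/100}$.

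The main obstacle is the delicate balancing of $j$ against the available regularity of the profile: $\widehat{v}$ has only $H^1_\xi$ control with slow growth $2^{p_0 m}$ and $L^\infty$ control from the $Z$-norm, so the frozen expansion in the near region must be justified by Cauchy--Schwarz against the $L^2$-size $2^{j/2}$ of the localization interval rather than by a pointwise Taylor bound, and one has to exploit the identities $\eta_1+\eta_3=\xi-\eta_2$, $\eta_2+\eta_3=\xi-\eta_1$ (of size $O(2^j)$ on the near set) to see the cancellations that reduce the cubic phase remainder to the quadratic Fresnel form. Once $j$ is chosen just above $s^{-1/2}$, the total power saving is of order $2^{-\delta m}$ with a very small $\delta$, which is enough for the $2^{-m/100}$ target on the right-hand side of \eqref{poi50}.
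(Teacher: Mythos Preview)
Your proposal is correct and follows essentially the same strategy as the paper: localize to a neighborhood of the space-time resonance $(\xi,\xi,-\xi)$ at a scale just above the stationary-phase scale, freeze the profiles and the symbol in the near region (via Cauchy--Schwarz against $\|\partial\widehat{v}\|_{L^2}$ and the bound \eqref{nonl7.5}), compute the resulting Fresnel integral to extract the $\mathcal{K}_\alpha|\xi|^{2-\alpha}/s$ coefficient, and integrate by parts once in the far region using the lower bound on $|\partial_{\eta_j}\Phi|$ together with an $L^2\times L^2\times L^\infty$ trilinear estimate. The paper implements the far-region step through a full dyadic decomposition in the two localization parameters $|\eta_1-\xi|\approx 2^{l_1}$, $|\eta_2-\xi|\approx 2^{l_2}$ (which is what makes the $L^2$ factors of size $2^{l_i/2}$ visible and summable), and fixes the near-region scale at $\overline{l}\approx -9m/20$; your sketch is compatible with this once you carry out that dyadic refinement.
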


\begin{proof} We prove only the bound on $I_{k_1,k_2,k_3}^{+,+,-}$, since the other two bounds are similar. We examine the integral defining $I_{k_1,k_2,k_3}^{+,+,-}(\xi,s)$, see \eqref{poi41}; we would like to show that the main contribution in this integral comes from a suitable neighborhood of the point $(\eta_1,\eta_2)=(\xi,\xi)$.

Let $\overline{l}$ denote the smallest integer with the property that $\overline{l}\geq -9m/20$. For integers $l\geq \overline{l}$ we define the functions $\varphi_l^{(\overline{l})}$ by $\varphi_l^{(\overline{l})}=\varphi_l$ if $l\geq\overline{l}+1$ and $\varphi_l^{(\overline{l})}=\varphi_{\leq l}$ if $l=\overline{l}$. Then we decompose
\begin{equation}\label{poi51}
I_{k_1,k_2,k_3}^{+,+,-}=\sum_{l_1,l_2\in[\overline{l},k_3+40]}J_{l_1,l_2}^{+,+,-},
\end{equation}
\begin{equation}\label{poi52}
\begin{split}
J_{l_1,l_2}^{+,+,-}(\xi):=\int_{\R\times\R}m_1(\eta_1,\eta_2,\xi-\eta_1-\eta_2)&e^{it\Phi(\xi,\eta_1,\eta_2)}\varphi_{l_1}^{(\overline{l})}(\xi-\eta_1)\varphi_{l_2}^{(\overline{l})}(\xi-\eta_2)\\
&\times\widehat{v_{k_1}^{+}}(\eta_1)\widehat{v_{k_2}^{+}}(\eta_2)\widehat{v_{k_3}^{-}}(\xi-\eta_1-\eta_2)\,d\eta_1 d\eta_2.
\end{split}
\end{equation}
For \eqref{poi50} it suffices to show that
\begin{equation}\label{poi53}
\Big|J_{\overline{l},\overline{l}}^{+,+,-}(\xi,s)-\mathcal{K}_\alpha|\xi|^{2-\alpha}\frac{m_1(\xi,\xi,-\xi)\widehat{v_{k_1}^+}(\xi,s)\widehat{v_{k_2}^+}(\xi,s)\widehat{v_{k_3}^-}(-\xi,s)}{s+1}\Big|\lesssim \e_1^32^{-m-m/99},
\end{equation}
and
\begin{equation}\label{poi54}
J_{l_1,l_2}^{+,+,-}(\xi,s)\lesssim \e_1^32^{-m-m/99}\qquad \text{ if }l_1\geq \overline{l}+1\text{ or }l_2\geq \overline{l}+1.
\end{equation}

{\it{Proof of \eqref{poi53}.}} Since $k\in[-10(p_0/\beta)m,10p_0m-1]$ and $2^l\|\partial\widehat{v_l}\|_{L^2}\lesssim \e_12^{p_0m}$, we have
\begin{equation*}
|\widehat{v_{k_1}^{+}}(\eta_1)-\widehat{v_{k_1}^{+}}(\xi)|+|\widehat{v_{k_2}^{+}}(\eta_2)-\widehat{v_{k_1}^{+}}(\xi)|+|\widehat{v_{k_3}^{-}}(\xi-\eta_1-\eta_2)-\widehat{v_{k_3}^{-}}(-\xi)|\lesssim \e_12^{11(p_0/\beta)m}2^{\overline{l}/2}
\end{equation*}
in the support of the integral defining $J_{\overline{l},\overline{l}}^{+,+,-}(\xi,s)$. Moreover, using \eqref{nonl7.5},
\begin{equation*}
|m_1(\eta_1,\eta_2,\xi-\eta_1-\eta_2)-m_1(\xi,\xi,-\xi)|\lesssim 2^{\overline{l}}2^{200p_0m}.
\end{equation*}
Therefore
\begin{equation}\label{poi55}
\begin{split}
\Big|J_{\overline{l},\overline{l}}^{+,+,-}(\xi,s)&-m_1(\xi,\xi,-\xi)\widehat{v_{k_1}^+}(\xi,s)\widehat{v_{k_2}^+}(\xi,s)\widehat{v_{k_3}^-}(-\xi,s)\\
&\times\int_{\R\times\R}e^{is\Phi(\xi,\eta_1,\eta_2)}\varphi_{\leq \overline{l}}(\xi-\eta_1)\varphi_{\leq\overline{l}}(\xi-\eta_2)\,d\eta_1 d\eta_2\Big|\lesssim 2^{5\overline{l}/2}2^{40(p_0/\beta)m}.
\end{split}
\end{equation}

For \eqref{poi53} it remains to prove that if $s\in [2^{m},2^{m+1}]$ then 
\begin{equation}\label{poi56}
\Big|\int_{\R\times\R}e^{is\Phi(\xi,\eta_1,\eta_2)}\varphi_{\leq \overline{l}}(\xi-\eta_1)\varphi_{\leq\overline{l}}(\xi-\eta_2)\,d\eta_1 d\eta_2-\frac{\mathcal{K}_\alpha|\xi|^{2-\alpha}}{s}\Big|\lesssim 2^{5\overline{l}/2}2^{40(p_0/\beta)m}.
\end{equation}
For this we make the change of variables $\eta_1=\xi+x_1$, $\eta_2=\xi+x_2$. Recalling that $\Lambda(\mu)=\gamma\mu|\mu|^{\alpha-1}$ we notice that
\begin{equation*}
\begin{split}
\Phi(\xi,\eta_1,\eta_2)&=-\Lambda(\xi)+\Lambda(\xi+x_1)+\Lambda(\xi+x_2)-\Lambda(\xi+x_1+x_2)\\
&=-\frac{\gamma\alpha(\alpha-1)x_1x_2}{\xi^{2-\alpha}}+O(2^{3\overline{l}}2^{20(p_0/\beta)m}),
\end{split}
\end{equation*}
in the support of the integral. After changes of variables, and recalling that $\mathcal{K}_\alpha=\frac{2\pi}{\gamma\alpha(\alpha-1)}$, see \eqref{nf50}, for \eqref{poi56} it suffices to prove that
\begin{equation}\label{poi57}
\Big|\int_{\R\times\R}e^{-iy_1y_2}\varphi_{\leq 0}(y_1/N)\varphi_{\leq 0}(y_2/N)\,dy_1 dy_2-2\pi\Big|\lesssim 2^{-m/2},
\end{equation}
where $N:=2^{\overline{l}}\sqrt{s\gamma\alpha(\alpha-1)/\xi^{2-\alpha}}$.

To prove \eqref{poi57} we start from the general identity
\begin{equation*}
\int_{\mathbb{R}}e^{-ax^2-bx}\,dx=e^{b^2/(4a)}\sqrt{\pi}/\sqrt{a},\qquad a,b\in\mathbb{C},\,\Re a>0.
\end{equation*}
Then we estimate, for $B\gg 1$,
\begin{equation*}
\int_{\mathbb{R}\times\mathbb{R}}e^{-ixy}e^{-x^2/B^2}e^{-y^2/B^2}\,dxdy=\sqrt{\pi}B\int_{\mathbb{R}}e^{-y^2/B^2}e^{-y^2B^2/4}\,dy=2\pi+O(B^{-1}).
\end{equation*}
Set $B=2^{2m}$. Since $N\in[2^{m/21},2^{m/19}]$, using integration by parts either in $y$ or in $x$, we have
\begin{equation*}
\begin{split}
&\Big|\int_{\mathbb{R}\times\mathbb{R}}e^{-ixy}e^{-x^2/B^2}e^{-y^2/B^2}\varphi_{\geq 1}(x/N)\varphi_{\leq 0}(y/N)\,dxdy\Big|\lesssim 2^{-m/2},\\
&\Big|\int_{\mathbb{R}\times\mathbb{R}}e^{-ixy}e^{-x^2/B^2}e^{-y^2/B^2}\varphi_{\geq 1}(y/N)\,dxdy\Big|\lesssim 2^{-m/2}.
\end{split}
\end{equation*}
Therefore 
\begin{equation*}
\Big|\int_{\mathbb{R}\times\mathbb{R}}e^{-ixy}e^{-x^2/B^2}e^{-y^2/B^2}\varphi_{\leq 0}(x/N)\varphi_{\leq 0}(y/N)\,dxdy-2\pi\Big|\lesssim 2^{-m/2},
\end{equation*}
and the desired conclusion \eqref{poi57} follows. This completes the proof of \eqref{poi53}.
\medskip

{\it{Proof of \eqref{poi54}.}} Without loss of generality we may assume that $l_1\geq \max(l_2,\overline{l}+1)$. We make the change of variables $\eta_1=\xi+x_1$, $\eta_2=\xi+x_2$, thus
\begin{equation}\label{poi60}
\begin{split}
J_{l_1,l_2}^{+,+,-}(\xi):=\int_{\R\times\R}m_1(\xi+x_1,&\xi+x_2,-\xi-x_1-x_2)e^{is\Phi(\xi,\xi+x_1,\xi+x_2)}\varphi_{l_1}^{(\overline{l})}(x_1)\varphi_{l_2}^{(\overline{l})}(x_2)\\
&\times\widehat{v_{k_1}^{+}}(\xi+x_1)\widehat{v_{k_2}^{+}}(\xi+x_2)\widehat{v_{k_3}^{-}}(-\xi-x_1-x_2)\,dx_1 dx_2.
\end{split}
\end{equation} 
We would like to integrate by parts in $x_2$. For this we notice that
\begin{equation}\label{poi61}
\begin{split}
\Big|\frac{d}{dx_2}[\Phi(\xi,\xi+x_1,\xi+x_2)]\Big|&=\big|\Lambda'(\xi+x_2)-\Lambda'(\xi+x_1+x_2)\big|\gtrsim 2^{l_1}2^{-k_3(2-\alpha)},
\end{split}
\end{equation}
in the support of the integral. We integrate by parts in $x_2$, as in the proof of Lemma \ref{lemma2}, and estimate
\begin{align*}
|J_{l_1,l_2}^{+,+,-}(\xi)| & \lesssim  |L_1(\xi)| + |L_2(\xi)| + |L_3(\xi)|,
\end{align*}
where, with $\Phi_\xi(x_1,x_2):=\Phi(\xi,\xi+x_1,\xi+x_2)=-\Lambda(\xi)+\Lambda(\xi+x_1)+\Lambda(\xi+x_2)+\Lambda(-\xi-x_1-x_2)$, 
\begin{align*}
\begin{split}
L_1(\xi) & := \int_{\mathbb{R}\times\mathbb{R}} e^{is\Phi_\xi(x_1,x_2)}
   n_2(\xi,x_1,x_2)
  \what{v_{k_1}^+}(\xi+x_1) (\partial\what{v_{k_2}^+})(\xi+x_2) \what{v_{k_3}^-}(-\xi-x_1-x_2) \,dx_1 dx_2,
\\
L_2(\xi) & := \int_{\mathbb{R}\times\mathbb{R}} e^{is\Phi_\xi(x_1,x_2)}
   n_2(\xi,x_1,x_2)
  \what{v_{k_1}^+}(\xi+x_1) \what{v_{k_2}^+}(\xi+x_2) (\partial\what{v_{k_3}^-})(-\xi-x_1-x_2) \,dx_1 dx_2,
\\
L_3(\xi) & := \int_{\mathbb{R}\times\mathbb{R}} e^{is\Phi_\xi(x_1,x_2)}
   (\partial_{x_2}n_2)(\xi,x_1,x_2)
  \what{v_{k_1}^+}(\xi+x_1) \what{v_{k_2}^+}(\xi+x_2) \what{v_{k_3}^-}(-\xi-x_1-x_2) \,dx_1 dx_2,
\end{split}
\end{align*}
and
\begin{align}\label{poi65}
\begin{split}
&n_2(\xi,x_1,x_2) := \frac{m_1(\xi+x_1,\xi+x_2,-\xi-x_1-x_2)}{s(\Lambda'(\xi+x_2)-\Lambda'(\xi+x_1+x_2))}\varphi_{l_1}^{(\overline{l})}(x_1)\varphi_{l_2}^{(\overline{l})}(x_2)\cdot\Psi_{k_1,k_2,k_3}(\xi,x_1,x_2),\\
&\Psi_{k_1,k_2,k_3}(\xi,x_1,x_2):=(\varphi'_{k_1}\cdot\mathbf{1}_+)(\xi+x_1)\cdot(\varphi'_{k_2}\cdot\mathbf{1}_+)(\xi+x_2)\cdot(\varphi'_{k_3}\cdot\mathbf{1}_+)(\xi+x_1+x_2).
\end{split}
\end{align}

We keep $\xi$ fixed and would like to use \eqref{mk6.1}. In view of \eqref{nonl7}, if $s\approx 2^m$ then
\begin{equation}\label{poi68.5}
\big\|\mathcal{F}^{-1}_{x_1,x_2}\{n_2(\xi,.,.)\}\big\|_{L^1(\mathbb{R}^2)}\lesssim 2^{-m}2^{-l_1}2^{6k_3^+}.
\end{equation}
Moreover, using \eqref{boundsflow}--\eqref{boundsfhigh},
\begin{equation}\label{poi69}
\begin{split}
&\|(\partial\what{v_{k_2}^+})(\xi+.)\|_{L^2}\lesssim \e_12^{p_0m}2^{-k_3},\\
&\|\what{v_{k_1}^+}(\xi+.)\varphi_{\leq l_1+4}(\xi+.)\|_{L^2}\lesssim \e_12^{l_1/2}2^{-N_2k_3^+}2^{10(p_0/\beta)m},\\
&\|\mathcal{F}^{-1}\{e^{is\Lambda(-\xi+.)}\what{v_{k_3}^-}(-\xi+.)\}\|_{L^\infty}\lesssim \e_12^{-m/2}2^{-k_3}.
\end{split}
\end{equation} 
Therefore, using \eqref{mk6.1},
\begin{equation*}
|L_1(\xi)|\lesssim \e_1^32^{-l_1/2}2^{-3m/2}2^{40(p_0/\beta)m}\lesssim \e_1^32^{-11m/10}.
\end{equation*}
A similar argument gives $|L_2(\xi)|\lesssim \e_1^32^{-11m/10}$. To bound $|L_3(\xi)|$ we also use \eqref{mk6.1}, and replace the $L^2$ bounds in \eqref{poi69} by
\begin{equation*}
\begin{split}
&\|\what{v_{k_1}^+}(\xi+.)\varphi_{\leq l_1+4}(\xi+.)\|_{L^2}\lesssim \e_12^{l_1/2}2^{-N_2k_3^+}2^{10(p_0/\beta)m},\\
&\|\what{v_{k_2}^+}(\xi+.)\varphi_{\leq l_2+4}(\xi+.)\|_{L^2}\lesssim \e_12^{l_2/2}2^{-N_2k_3^+}2^{10(p_0/\beta)m},
\end{split}
\end{equation*} 
and the $S^\infty$ bound in \eqref{poi68.5} by
\begin{equation*}
\big\|\mathcal{F}^{-1}_{x_1,x_2}\{\partial_{x_2}n_2(\xi,.,.)\}\big\|_{L^1(\mathbb{R}^2)}\lesssim 2^{-m}2^{-l_1}2^{-l_2}2^{6k_3^+}.
\end{equation*}
It follows that 
\begin{equation*}
|L_3(\xi)|\lesssim \e_1^32^{-l_1/2}2^{-l_2/2}2^{-3m/2}2^{40(p_0/\beta)m}\lesssim \e_1^32^{-51m/50}.
\end{equation*}
This completes the proof of \eqref{poi54}.
\end{proof}

\subsubsection{Proof of \eqref{mainlemma2conc3}} We estimate now the contribution of the quintic and higher order nonlinearities.

\begin{lemma}\label{rbounds}
For any $t\in[0,T]$ we have
\begin{equation}\label{nonl15}
\|\mathcal{R}_{\geq 2}(t)\|_{L^2}+\|S\mathcal{R}_{\geq 2}(t)\|_{L^2}\lesssim \varep_0(1+t)^{-3/2}.
\end{equation}
\end{lemma}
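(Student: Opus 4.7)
The plan is to bound each $\mathcal{N}_n=L_n[h,\ldots,h]$ (for $n\geq 2$, with $2n+1$ factors of $h$, see \eqref{cru0}) via the multilinear $L^2\times L^\infty\times\cdots\times L^\infty$ estimate of Lemma \ref{touse}(iii), coupled with the $S^\infty$ bound of Lemma \ref{mbounds}(ii):
\begin{equation*}
\|m_n^{\underline{k}}\|_{S^\infty}\lesssim C^n 2^{k_1+\cdots+k_{2n+1}}2^{(\alpha-1)k_{\max}},\qquad k_{\max}:=\max_i k_i.
\end{equation*}
Decomposing $\mathcal{N}_n=\sum_{\underline{k}}L_n[P_{k_1}h,\ldots,P_{k_{2n+1}}h]$, by symmetry we may place the factor at frequency $k_{\max}$ in $L^2$ and the remaining $2n$ factors in $L^\infty$; combining $2^{k_{\max}}\cdot 2^{(\alpha-1)k_{\max}}=2^{\alpha k_{\max}}$ then yields
\begin{equation*}
\|L_n[P_{k_1}h,\ldots,P_{k_{2n+1}}h]\|_{L^2}\lesssim C^n\cdot 2^{\alpha k_{\max}}\|P_{k_{\max}}h\|_{L^2}\prod_{k_i\neq k_{\max}}2^{k_i}\|P_{k_i}h\|_{L^\infty}.
\end{equation*}

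The energy estimate \eqref{Ens29} and the pointwise decay of Lemma \ref{Poi2} furnish the summable bounds
\begin{equation*}
\sum_k 2^{\alpha k}\|P_kh(t)\|_{L^2}\lesssim \|h(t)\|_{H^{N_0\alpha}}\lesssim \e_1\langle t\rangle^{p_0},\qquad \sum_k 2^k\|P_kh(t)\|_{L^\infty}\lesssim \e_1\langle t\rangle^{-1/2},
\end{equation*}
where the second series converges because at low frequencies $\|P_kh\|_{L^\infty}\lesssim \e_1\langle t\rangle^{-1/2}2^{-k/2+\beta k}$ while at high frequencies $\|P_kh\|_{L^\infty}\lesssim \e_1\langle t\rangle^{-1/2}2^{-N_2\alpha k}$ with $N_2\alpha>1$. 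Summing the preceding multilinear bound over $\underline{k}$ gives
\begin{equation*}
\|\mathcal{N}_n(t)\|_{L^2}\lesssim (2n+1)C^n\e_1^{2n+1}\langle t\rangle^{p_0-n},
\end{equation*}
and then summing over $n\geq 2$ (which is legal once $C\e_1^2<1/2$) produces a total bound of order $\e_1^5\langle t\rangle^{p_0-2}\lesssim \e_0\langle t\rangle^{-3/2}$, using $\e_1\leq \e_0^{2/3}$ and $p_0\ll 1/2$.

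For $\|S\mathcal{N}_n(t)\|_{L^2}$ we invoke \eqref{mk6.9}--\eqref{mk6.10}. Since $m_n$ is homogeneous of degree $2n+\alpha$ by \eqref{non4.2}, the induced symbol is $\widetilde{m}_n=-(2n+\alpha)m_n$, so
\begin{equation*}
S\mathcal{N}_n=\sum_{j=1}^{2n+1}L_n[h,\ldots,\underbrace{Sh}_{\text{slot }j},\ldots,h]-(2n+\alpha)\mathcal{N}_n,
\end{equation*}
and the last summand is already controlled. For each term in the sum we place $Sh$ in $L^2$ and split into two sub-cases: if $Sh$ is the factor at $k_{\max}$, we use $\sum_k 2^{\alpha k}\|P_kSh(t)\|_{L^2}\lesssim \|Sh(t)\|_{H^{N_1\alpha}}\lesssim \e_1\langle t\rangle^{p_0}$, which converges because $N_1\alpha>1$; otherwise the factor of $h$ at $k_{\max}$ sits in $L^\infty$ and we pair $\sum_k 2^{\alpha k}\|P_kh(t)\|_{L^\infty}\lesssim \e_1\langle t\rangle^{-1/2}$ (finite at low frequencies since $\alpha-1/2+\beta>0$ and at high since $N_2\alpha>1$) with $\sum_k 2^k\|P_kSh(t)\|_{L^2}\lesssim \e_1\langle t\rangle^{p_0}$. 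Either sub-case again yields $\langle t\rangle^{p_0-n}$, which sums over $n\geq 2$ to the desired $\e_0\langle t\rangle^{-3/2}$ bound. The entire argument is a careful bookkeeping of the multilinear $S^\infty$ bounds against the available norms; no serious obstacle arises, the only point requiring care being the convergence of the weighted frequency sums, which forces $N_1,N_2>1/\alpha$ (satisfied by $N_1=4,N_2=8$) and exploits $\alpha-1/2+\beta>0$ at low frequencies.
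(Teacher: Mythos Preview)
Your proof is correct, but it takes a different route from the paper. The paper works directly with the physical-space formula \eqref{Nonl3},
\[
\mathcal{N}_n(x)=d_n\int_{\mathbb{R}}\frac{h_x(x)-h_x(x-y)}{|y|^\alpha}\Big(\frac{h(x)-h(x-y)}{y}\Big)^{2n}dy,
\]
splits the integral dyadically in $|y|\approx 2^p$, and uses the elementary difference-quotient bound $\|(H(\cdot)-H(\cdot-y))/|y|\|_{L^q}\lesssim\min(2^{-p}\|H\|_{L^q},\|H'\|_{L^q})$ together with the pointwise decay $\|h_x\|_{L^\infty}+\|h_{xx}\|_{L^\infty}\lesssim\e_1\langle t\rangle^{-1/2}$. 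Your argument instead works in Fourier space: you Littlewood--Paley decompose all $2n+1$ inputs, invoke the $S^\infty$ bound \eqref{nonl7} on $m_n$, and run an $L^2\times L^\infty\times\cdots\times L^\infty$ multilinear estimate, exactly mirroring the machinery already set up in Step~1 of Lemma~\ref{MainhDec}. The paper's approach is more self-contained here (it does not need Lemma~\ref{mbounds}(ii)), while yours is more uniform with the rest of the paper and makes the summability constraints ($N_1,N_2>1/\alpha$, $\alpha-1/2+\beta>0$) explicit. Both yield the same $\e_1^{2n+1}\langle t\rangle^{p_0-n}$ bound for each $\mathcal{N}_n$, $n\geq 2$.
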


\begin{proof} It suffices to prove that, for any $n\geq 2$,
\begin{equation}\label{nonl16}
\|\mathcal{N}_{n}(t)\|_{L^2}+\|S\mathcal{N}_{n}(t)\|_{L^2}\leq (C\varep_1)^{2n+1}(1+t)^{-3/2}
\end{equation}
for some constant $C\geq 1$. We use the formula \eqref{Nonl3}. We notice that if $|y|\approx 2^p$ then
\begin{equation}\label{nonl17}
\Big\|\frac{H(x)-H(x-y)}{|y|}\Big\|_{L^q_x}\lesssim \min(2^{-p}\|H\|_{L^q},\|H'\|_{L^q})
\end{equation}
for any function $H:\mathbb{R}\to\mathbb{C}$ and $q\in \{2,\infty\}$. The bootstrap assumptions \eqref{zxc4} and the pointwise bounds \eqref{Poi2} show that
\begin{equation}\label{nonl18}
\begin{split}
&\|h(t)\|_{L^2}+\|h_x(t)\|_{L^2}+\|h_{xx}(t)\|_{L^2}+\|Sh(t)\|_{L^2}+\|Sh_x(t)\|_{L^2}+\|Sh_{xx}(t)\|_{L^2}\lesssim \varep_1\langle t\rangle^{p_0},\\
&\|h_x(t)\|_{L^\infty}+\|h_{xx}(t)\|_{L^\infty}\lesssim \varep_1\langle t\rangle^{-1/2},\qquad \|h(t)\|_{L^\infty}\lesssim \varep_1\langle t\rangle^{-1/5},
\end{split}
\end{equation}
for any $t\in[0,T]$ (the last bound follows from the estimates $\|P_kh\|_{L^\infty}\lesssim \varep_1\langle t\rangle^{-1/2}2^{\beta k-k/2}$ and $\|P_kh\|_{L^\infty}\lesssim \varep_12^{k/2}\langle t\rangle^{p_0}$). The bounds \eqref{nonl16} follow from \eqref{nonl17}--\eqref{nonl18} and the definitions. Indeed, let $\mathcal{N}_{n}^{(p)}$ denote the contribution of $|y|\approx 2^p$ in \eqref{Nonl3}. For $p\leq 0$ we use the estimate \eqref{nonl17} with $\|H'\|_{L^q}$, and estimate one of the factors in $L^2$ and the others in $L^\infty$ (for the vector-field bound we estimate the factor that carries the vector-field in $L^2$). It follows that
\begin{equation*}
\|\mathcal{N}^{(p)}_{n}(t)\|_{L^2}+\|S\mathcal{N}^{(p)}_{n}(t)\|_{L^2}\leq 2^{p(2-\alpha)}(C\varep_1)^{2n+1}(1+t)^{-3/2},
\end{equation*}
which gives gives the desired bound for the contribution of $p\leq 0$. The contribution over $p\geq 0$ can be bounded in a similar way: we estimate one of the factors using the $2^{-p}\|H\|_{L^q}$ bound in \eqref{nonl17}, and the remaining $2n$ factors using the $\|H'\|_{L^q}$ bound. The bound \eqref{nonl16} follows. 
\end{proof}

We turn now to the proof of \eqref{mainlemma2conc3}. Assume that $k\in[-10(p_0/\beta)m,10p_0m-1]$, $|\xi_0|\in [2^k,2^{k+1}]$, $m\geq 10$, $t_1\leq t_2\in[2^{m},2^{m+1}]\cap[0,T']$. We would like to prove that
\begin{equation}\label{lmj20}
\Big|\varphi_k(\xi_0)\int_{t_1}^{t_2}e^{iL(\xi_0,s)}e^{-is\Lambda(\xi_0)}\widehat{\mathcal{R}_{\geq 2}}(\xi_0,s)\,ds\Big|\lesssim \e_02^{-400p_0m}.
\end{equation}

Let
\begin{equation}\label{lmj21}
F(\xi):=\varphi_k(\xi)\int_{t_1}^{t_2}e^{iL(\xi_0,s)}e^{-is\Lambda(\xi)}\widehat{\mathcal{R}_{\geq 2}}(\xi,s)\,ds.
\end{equation}
In view of Lemma \ref{interpolation}, it suffices to prove that 
\begin{equation*}
2^{-k}\|F\|_{L^2}\big[2^k\|\partial F\|_{L^2}+\|F\|_{L^2}\big]\lesssim \e_0^22^{-800p_0m}.
\end{equation*}
Since $\|F\|_{L^2}\lesssim \e_02^{-m/2}$, see the first inequality in \eqref{nonl15}, it suffices to prove that
\begin{equation}\label{lmj22}
2^k\|\partial F\|_{L^2}\lesssim \e_02^{m/4}.
\end{equation}

To prove \eqref{lmj22} we write
\begin{equation*}
|\xi\partial_\xi F(\xi)|\leq |F_1(\xi)|+|F_2(\xi)|+|F_3(\xi)|,
\end{equation*}
where
\begin{equation*}
\begin{split}
&F_1(\xi):=\xi(\partial_\xi\varphi_k)(\xi)\int_{t_1}^{t_2}e^{iL(\xi_0,s)}\big[e^{-is\Lambda(\xi)}\widehat{\mathcal{R}_{\geq 2}}(\xi,s)\big]\,ds,\\
&F_2(\xi):=\varphi_k(\xi)\int_{t_1}^{t_2}e^{iL(\xi_0,s)}\big[\xi\partial_\xi-\alpha s\partial_s\big]\big[e^{-is\Lambda(\xi)}\widehat{\mathcal{R}_{\geq 2}}(\xi,s)\big]\,ds,\\
&F_3(\xi):=\varphi_k(\xi)\int_{t_1}^{t_2}e^{iL(\xi_0,s)}\alpha s\partial_s\big[e^{-is\Lambda(\xi)}\widehat{\mathcal{R}_{\geq 2}}(\xi,s)\big]\,ds.
\end{split}
\end{equation*}
Using \eqref{nonl15} again we have
\begin{equation*}
\|F_1\|_{L^2}+\|F_2\|_{L^2}\lesssim \e_0.
\end{equation*}
Moreover, using integration by parts in $s$ and the bound $\big|\partial_s\big[e^{iL(\xi_0,s)}\big]\big|\lesssim 2^{-9m/10}$, see the definition \eqref{nf50}, we can also estimate $\|F_3\|_{L^2}\lesssim \e_0$. The desired bound \eqref{lmj22} follows.

\bibliographystyle{abbrv}
\bibliography{references}

\end{document}